\documentclass[letterpaper,11pt]{amsart}

\usepackage{amsmath, amsxtra, amsthm, amssymb,mathtools,bm,amsfonts}
\usepackage{graphicx}
\usepackage{url}
\usepackage{tikz-cd}
\usepackage{tikz}
\usepackage[T1]{fontenc}

\usepackage{enumitem}
\usepackage[pdftex,hidelinks,backref=page]{hyperref}
\hypersetup{
    colorlinks,
    citecolor=magenta,
    filecolor=magenta,
    linkcolor=blue,
    urlcolor=black
}

\usepackage{mathtools}
\usepackage{esvect}
\usepackage{multirow}
\usepackage{listings}
\usepackage{accents}
\usepackage{subcaption}
\usepackage[cmtip,all]{xy}

\DeclareMathOperator*{\CC}{\mathbb{C}}
\DeclareMathOperator*{\ZZ}{\mathbb{Z}}
\DeclareMathOperator*{\codim}{\mathrm{codim}}
\DeclareMathOperator*{\Sym}{\mathrm{Sym}}
\DeclareMathOperator*{\Wr}{\mathrm{Wr}}
\DeclareMathOperator*{\Gr}{\mathrm{Gr}}
\DeclareMathOperator*{\GL}{\mathrm{GL}}

\DeclareMathOperator*{\PGL}{\mathrm{PGL}}
\DeclareMathOperator*{\Tr}{\mathrm{Tr}}
\DeclareMathOperator*{\PhiNew}{\Phi_{\mathrm{N}}}
\DeclareMathOperator*{\FNew}{F_{\mathrm{N}}}
\DeclareMathOperator*{\PhiWall}{\Phi_{\mathrm{W}}}
\DeclareMathOperator*{\FWall}{F_{\mathrm{W}}}

\theoremstyle{definition}
  \newtheorem{mydef}{Definition}
  \numberwithin{mydef}{section}

\theoremstyle{plain}
  \newtheorem{mythm}[mydef]{Theorem}
  \newtheorem{myprop}[mydef]{Proposition}
  \newtheorem{mylemma}[mydef]{Lemma}
  \newtheorem{mycor}[mydef]{Corollary}

\theoremstyle{remark}
  \newtheorem{myrmk}[mydef]{Remark}
  \newtheorem{myexample}[mydef]{Example}

\numberwithin{equation}{section}

\title{On the PGL\textsubscript{2}-equivariant intersection theory of G\lowercase{r}(2,4)}

\author{Yuxuan Sun}
\address{Department of Mathematics, University of Toronto, Toronto, ON M5S 2E4, Canada}
\email{\href{mailto:austin.sun@mail.utoronto.ca}{austin.sun@mail.utoronto.ca}}
\thanks{The author was partially supported by a NSERC Canada Graduate Scholarship - Doctoral (CGS D)}

\subjclass[2020]{Primary 14C15, 14C17. Secondary 14D23, 14L24.}

\begin{document}

\begin{abstract}
We determine the $\mathrm{PGL}_2$-equivariant Chow ring of $\mathrm{Gr}(2,4)^s$, the $\mathrm{PGL}_2$-stable locus of $\mathrm{Gr}(2,4)$, over any algebraically closed based field of characteristic not equal to 2 or 3. In the process, we demonstrate that the quotient stack $[\mathrm{Gr}(2,4)^s/\mathrm{PGL}_2]$ can be presented as the quotient of an open subset of $\mathbb{P}^1$ by a suitably chosen $S_4\leq \mathrm{PGL}_2$. We also discuss some apparent difficulties with computing the full $\mathrm{PGL}_2$-equivariant Chow ring of $\mathrm{Gr}(2,4)$.
\end{abstract}

\maketitle
\setcounter{tocdepth}{1}
\tableofcontents

\section{Introduction}
\label{section-intro}

Let $k$ be an algebraically closed field of characteristic not equal to 2 or 3, and fix $W=k[t_0,t_1]_1$ to be the two-dimensional vector space over $k$ spanned by the indeterminates $t_0$ and $t_1$. The goal of this paper is to give a presentation for the $\PGL_2$-equivariant Chow ring of the $\PGL_2$-stable locus of $\Gr(2, \Sym^3 W)$, which is an open subset of $\Gr(2,4)$ consisting of all but six $\PGL_2$-orbits according to results from geometric invariant theory \cite{Ne81}. In particular, we shall compute this $\PGL_2$-equivariant Chow ring by establishing the isomorphism of quotient stacks
\begin{align}
[\Gr(2,4)^s/{\PGL}_2] &\cong [(\mathbb{P}^1 - \FWall) / S_4], \label{key-quot-stack-isom}
\end{align}
where $\Gr(2,4)^s$ denotes the $\PGL_2$-stable locus, $\FWall \subset \mathbb{P}^1$ is a fixed finite subset defined in Remark \ref{rel-bw-Newstead-and-Wall-reps}, and $S_4 \leq \PGL_2$ acts on $\mathbb{P}^1$ via the induced action of its unique two-dimensional irreducible representation. 

\subsection{Background}
We begin by identifying $\mathbb{P}^n:=\mathbb{P}(\Sym^n W)$ with the moduli space of degree $n$ divisors on $\mathbb{P}^1:= \mathbb{P}W$ via
\[
\mathbb{P}({\Sym}^n W) \cong {\Sym}^n \mathbb{P}W.
\] 
One may obtain an explicit $\PGL_2$-stratification of this moduli space using $\PGL_2$-invariant subvarieties known as coincident root loci: as introduced in \cite{FNR06}, these subvarieties are defined by
\[
X_{\lambda} := \left\{ B(t_0, t_1) \in k[t_0,t_1]_n \mid B = \prod_{j=1}^d L_j^{\lambda_j} \text{ for some linear forms $L_j$} \right\}
\]
for every partition $\lambda = (\lambda_1, \dots, \lambda_d)$ of $n$. In other words, $X_{\lambda}$ is the collection of all points in $\mathbb{P}(\Sym^n W)$ corresponding to degree $n$ divisors on $\mathbb{P}^1$ of type $\lambda=(\lambda_1, \dots,\lambda_d)$. In \cite{FNR06}, Feh\'{e}r, N\'{e}methi and Rim\'{a}nyi gave explicit formulae for the $\GL_2$-equivariant Poincar\'{e} duals of the $X_{\lambda}$'s, and it was also shown that these $\GL_2$-equivariant Poincar\'{e} duals generate the $\GL_2$-equivariant cohomology and Chow rings of $\mathbb{P}(\Sym^n W)$. Explicit presentations for the $\PGL_2$-equivariant Chow ring of $\mathbb{P}^n$ were also given by Spink and Tseng in \cite{ST22} using the formulae for the Poincar\'{e} duals of the $X_{\lambda}$'s found by \cite{FNR06}. 

By extension, one may study the $\PGL_2$-invariant subvarieties of $\Gr( d, {\Sym}^n W)$ via the Wronskian map, $\Wr: \Gr( d, {\Sym}^n W) \rightarrow \mathbb{P}({\Sym}^N W)$ given by
\[
\Wr(\langle f_1, \dots, f_d \rangle) := \mathrm{Jac}(f_1, \dots,f_d)
\]
where $\mathrm{Jac}(f_1, \dots,f_d)$ is the Jacobian of the polynomials $f_1,\dots, f_d$, and $N=d(n+1-d)$ is the dimension of the Grassmannian. As a morphism of schemes, the Wronskian map is $\PGL_2$-equivariant, flat and finite of degree equal to the number of rectangular $d\times(n+1-d)$ standard Young tableaux (Lemma 1.1.3 of \cite{Me04}, cf. \cite{EH83}). Using the Wronskian and GIT, Newstead \cite{Ne81} and Wall \cite{Wa98} were able to classify the $\PGL_2$-orbits of $\Gr(2,4)$ and $\Gr(2,5)$ respectively. In addition, Meulien \cite{Me04} calculated the ring of $\PGL_2$-invariants for $\Gr(2,6)$.

Hence, it is natural to ask for ring presentations of $A^*_{\PGL_2}(\Gr( d, {\Sym}^n W))$, the $\PGL_2$-equivariant Chow ring of $\Gr( d, {\Sym}^n W)$. In general, one cannot use the pullback of the Wronskian to deduce a ring presentation for $A^*_{\PGL_2}(\Gr( d, {\Sym}^n W))$ directly from that of $A^*_{\PGL_2}(\mathbb{P}^{d(n+1-d)})$, given that the degree of $\Wr$ is strictly greater than 1 for each $d\geq 2$ and $n\geq 3$. Therefore, more work needs to be done for analyzing the $\PGL_2$-invariant subvarieties of $\Gr( d, {\Sym}^n W)$.
\subsection{Challenges with calculating $A^*_{\mathrm{PGL}_2}(\mathrm{Gr}(2, 4))$}

Whenever $n$ is even, one may directly determine the ring structure of $A^*_{\PGL_2}(\Gr( d, {\Sym}^n W))$ using $\PGL_2$-representation theory: we may view $\Gr( d, {\Sym}^n W)$ as a $\PGL_2$-equivariant Grassmann bundle of 2-planes in ${\Sym}^n W$ over a point, and we carry out this computation for each $n$ even in Proposition \ref{integral-Chow-ring-of-Grassmannian-equivar-PGL2-bd}. However, such computations become highly non-trivial for $n$ odd, given that ${\Sym}^n W$ is no longer a $\PGL_2$-representation. Even for the first non-trivial case $\Gr(2, {\Sym}^3 W) = \Gr(2,4)$, computations for its $\PGL_2$-equivariant Chow ring do not seem to exist in the current literature. As such, we shall focus on studying the $\PGL_2$-equivariant intersection theory of $\Gr(2,4)$. 

In general, $\PGL_2$-equivariant Chow rings are difficult to compute since $\PGL_2$ is not a special group: there exists $\PGL_2$-torsors over certain schemes that are  \'{e}tale-locally trivial but not Zariski-locally trivial. As such, when computing the $\PGL_2$-equivariant Chow ring of a smooth scheme $X$, it is useful to regard $A^*_{\PGL_2}(X)$ as the Chow ring of the quotient stack $[X/\PGL_2]$ and attempt to rewrite the stack presentation as a quotient by some special subgroup (i.e., free of $\PGL_2$). For example, in \cite{DL21}, Di Lorenzo computed the integral Chow ring of the stack $\mathcal{H}_g$ of hyperelliptic curves of odd genus by discovering a way of writing $\mathcal{H}_g$ as a $GL_3\times \mathbb{G}_m$-quotient stack as opposed to the previously known $\PGL_2\times \mathbb{G}_m$-quotient presentation. In \cite{AI19}, Asgarli and Inchiostro computed the Picard group of the classifying stack of complete intersections of pairs of quadrics in $\mathbb{P}^n$ by discovering a different quotient stack presentation of this stack other than the natural quotient by $\PGL_n$. Other similar examples of computing Chow rings of quotient stacks involving $\PGL_2$-presentations include \cite{CL25}. 

Although we are not able to find a ring presentation for $A^*_{\PGL_2}(\Gr(2,4))$, we are able to determine the $\PGL_2$-equivariant Chow ring of the stable locus $\Gr(2,4)^s$ by rewriting the quotient stack $[\Gr(2,4)^s/\PGL_2]$ in terms of a quotient by a fixed finite subgroup of $\PGL_2$ as in Equation \ref{key-quot-stack-isom}. There does not seem to be an analogous isomorphism of quotient stacks for the semistable locus $\Gr(2,4)^{ss}$ or for the entire Grassmannian $\Gr(2,4)$: see Section \ref{statement-of-main-results} for detailed discussions. Hence, for the rest of this paper, we shall focus on the stable locus $\Gr(2,4)^s$. Given that $\Gr(2,4)^s$ consists of all but six $\PGL_2$-orbits \cite{Ne81}, computing $A^*_{\PGL_2}(\Gr(2,4)^s)$ would give us a wealth of information about the $\PGL_2$-equivariant intersection theory of $\Gr(2,4)$ already. In this way, we believe that the problem of computing $A^*_{\PGL_2}(\Gr(2,4)^s)$ is interesting in its own right.

We conclude this section by discussing the apparent difficulties with computing $A^*_{\PGL_2}(\Gr(2,4))$. One feasible approach for computing $A^*_{\PGL_2}(\Gr(2,4))$ seems to be the stratification method. In this case, the natural $\PGL_2$-equivariant strata would consist of the open stratum $\Gr(2,4)^s$ plus the six distinct orbit closures $\overline{Z_1}, \overline{Z^{(0)}_2}, \overline{Z^{(1)}_2}, \overline{Z^{(2)}_2}, \overline{Z^{(1)}_3}, \overline{Z^{(2)}_3}$, where each $Z^{(j)}_i$ is a distinct non-$\PGL_2$-stable orbit of dimension $i$ (Section 4 of \cite{Ne81}). However, implementing the stratification method is challenging since the $\overline{Z^{(j)}_i}$'s do not form a strictly increasing chain by containment and have non-empty intersections: as shown in Section \ref{difficulties-entire-Chow-ring}, the inclusion relations between the six closed strata are given by Figure \ref{fig-PGL2-non-stable}, which demonstrates that all 2- and 3-dimensional closed strata have non-empty intersections. In this way, it is difficult to find relations between pushforwards of generators coming from the Chow rings of different strata since the relevant excision and Mayer-Vietoris exact sequences do not give any information about these relations. More details are given in Section \ref{difficulties-entire-Chow-ring}.

\subsection{Organization}
The rest of this paper is organized as follows. In Section \ref{section-basics}, we give the necessary background for equivariant intersection theory, followed by a summary of useful results from the classification of $\PGL_2$-orbits of $\Gr(2,4)$ by Newstead \cite{Ne81}. Then, in Section \ref{statement-of-main-results}, we state our main results regarding Equation \ref{key-quot-stack-isom} and our ring presentation for $A^*_{\PGL_2}(\Gr(2,4)^s)$. Section \ref{description-S4-symmetry-stable-locus} gives a description of the $S_4$-symmetry on $\Gr(2,4)$. Then, Section \ref{description-of-equivar-Chow-ring-of-locus-of-stable-pts-S4-symm} gives the proof of Theorem \ref{main-thm-S4-symmetry-stable-locus}, and Section \ref{computation-equivar-Chow-ring-of-locus-of-stable-pts} gives the proof Theorem \ref{main-thm-Chow-ring-stable-locus}. Finally, Section \ref{section-future-directions} discusses interpretations of the generators of the ring $A^*_{\PGL_2}(\Gr(2,4)^s)$ that we find, followed by a discussion of difficulties of determining the entire $\PGL_2$-equivariant Chow ring $A^*_{\PGL_2}(\Gr(2,4))$.

\subsection*{Acknowledgements}
The author would like to thank his PhD thesis supervisor, Prof. Hunter Spink, for suggesting this project and for the numerous helpful discussions leading to the completion of this paper. In addition, the author would like to acknowledge partial funding support by the NSERC in the form of a NSERC Canada Graduate Scholarship - Doctoral (CGS D) for this project.

\section{Basic definitions and results}
\label{section-basics}

\subsection{Results from equivariant intersection theory}
\label{subsection-basics-EIT}

In this section, we summarize some basic results from equivariant intersection theory needed for later sections. Our main sources of reference for equivariant intersection theory are the paper by Edidin and Graham \cite{EG98} as well as the book by Totaro \cite{To14}, and we also refer to the book on equivariant cohomology by Anderson and Fulton \cite{AF24} for analogous results in the case of cohomology. 

For simplicity, we will assume that all schemes considered are smooth, and that actions of group schemes are smooth actions: this is the case for $\PGL_2$-actions on Grassmannians $\Gr(d, \Sym^n W)$. In particular, there exists a well-defined graded ring structure on every $G$-equivariant Chow ring $A^*_{G}(X)$ whenever $G$ acts smoothly on a smooth scheme $X$ \cite{EG98}. Moreover, $G$-equivariant Chern classes of any $G$-equivariant vector bundle $E$ over $X$ are elements of $A^*_{G}(X)$ \cite{EG98}: we shall denote these classes by
\[
c_i^G(E) \in A^i_{G}(X)
\]
for $i=1, \dots, \mathrm{rank}(E)$.

Results from \cite{EG98} allows one to define the Chow ring of the quotient stack $[X/G]$ as
\[
A^*([X/G]) \cong A^*_{G}(X)
\]
whenever $[X/G]$ is smooth, and it is also shown in \cite{EG98} that the above Chow ring does not depend on presentations of the quotient stack. Hence, for our purposes, we may write
\[
A^*_{\PGL_2}(\Gr(d, {\Sym}^n W)) \cong A^*([\Gr(d, {\Sym}^n W)/{\PGL}_2])
\]
and analogously for $A^*_{\PGL_2}(\Gr(2,4)^s)$ when finding its presentation via the isomorphism given by Equation \ref{key-quot-stack-isom}.

Denote the $G$-equivariant Chow ring of a point by
\[
A^*_G := A^*_G(\{ \text{pt} \}),
\]
and notice that it has a well-defined graded ring structure for every smooth group scheme $G$. We may also relate equivariant Chow rings of different smooth schemes by pullback and pushforward maps. For any $G$-equivariant morphism $f:X \rightarrow Y$ between smooth schemes $X$ and $Y$, there exists a well-defined pullback ring homomorphism $f^*: A^*_G(Y) \rightarrow A^*_G(X)$ \cite{EG98}. In addition, for any proper $G$-equivariant morphism $f: X\rightarrow Y$ between $X$ and $Y$ smooth, there exists a pushforward map on equivariant Chow groups $f_*: A^i_G(X) \rightarrow A^{i+d}_G(Y)$ with $d=\dim Y - \dim X$ given by the relevant Gysin pushforward maps \cite{EG98}. In this case, $f_*$ is a $A^*_{G}$-module homomorphism for each $i$ but not necessarily a ring homomorphism. Finally, whenever $H\leq G$ is a closed subgroup and $X$ is a smooth $G$-scheme, we may construct a pullback $\iota^*: A^*_{G}(X) \rightarrow A^*_{H}(X)$ from the inclusion map of groups $\iota: H\xhookrightarrow{} G$. 

For our proof of Theorem \ref{main-thm-Chow-ring-stable-locus}, we shall need the following result of \cite{Ve98} calculating the $\PGL_2$-equivariant Chow ring of a point with integral coefficients. This result was originally obtained by Pandharipande in \cite{Pa98} using the isomorphism $\PGL_2(\CC) \cong SO(3)_{\CC}$.

\begin{mythm}[\cite{Ve98}, Theorem 1]
\label{integral-Chow-ring-of-classifying-space-of-PGL2}

Suppose that the base field $k$ is of characteristic not equal to 2. Denote the adjoint representation of $\PGL_2(k)$ by $\mathfrak{sl}_2$ (notice that $\mathfrak{sl}_2 \cong \Sym^2 W$ as $PGL_2$-representations). Then, the $\PGL_2$-equivariant Chow ring of a point is given by
\[
A^*_{\PGL_2} \cong \ZZ[c_2(\mathfrak{sl}_2),c_3(\mathfrak{sl}_2)]/(2c_3(\mathfrak{sl}_2)).
\]
\end{mythm}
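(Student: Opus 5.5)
We would prove the statement by identifying $B\PGL_2$ with $BSO_3$ and computing the Chow ring of the latter via a stratification of an explicit representation. Since $\operatorname{char} k\neq 2$, the adjoint representation $\mathfrak{sl}_2\cong \Sym^2 W$ carries the nondegenerate $\PGL_2$-invariant quadratic form given by the discriminant (equivalently, the Killing form). This gives a homomorphism $\PGL_2\to SO(\mathfrak{sl}_2)=SO_3$. We would check that it is an isomorphism of group schemes by verifying that it is injective on points and on Lie algebras, and then comparing dimensions and connectedness. After this, $A^*_{\PGL_2}=A^*_{SO_3}$, and $\mathfrak{sl}_2$ corresponds to the standard representation $V$ of $SO_3$. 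So it suffices to show $A^*_{SO_3}\cong \ZZ[c_2(V),c_3(V)]/(2c_3(V))$, and we would follow the approach of Edidin--Graham and Totaro for $SO_{2n+1}$.

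The first step is to use $O_3\cong SO_3\times \mu_2$ (valid since the rank is odd) together with the standard computation $A^*_{O_n}=\ZZ[c_1,\dots,c_n]/(2c_{\mathrm{odd}})$. We would get the latter from the fact that $O_n$ is the stabilizer of a nondegenerate quadric in $\GL_n$. The space of nondegenerate forms $U\subset \Sym^2 V^\vee$ is open, so $A^*_{O_n}=A^*_{\GL_n}(U)$, which is a quotient of $\ZZ[c_1,\dots,c_n]$ by the classes supported on the degenerate locus. Alternatively, one can avoid $O_3$ and work directly with $SO_3$. Take the representation $V$ and the open set $V\setminus\{0\}$, whose stratification is by the value of the quadratic form $q$. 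The nonzero level sets of $q$ are homogeneous spaces $SO_3/SO_2$, and the isotropic cone minus the origin is $SO_3/(\text{unipotent}\rtimes\mathbb{G}_m)$. Since $SO_2\cong\mathbb{G}_m$ is special, $A^*_{SO_2}=\ZZ[t]$. The excision sequence
\[
A^{*-3}_{SO_3}\xrightarrow{\cdot c_3(V)} A^*_{SO_3}\to A^*_{SO_3}(V\setminus 0)\to 0
\]
then shows that $A^*_{SO_3}$ is generated by $c_2,c_3$ together with the classes computed on $V\setminus 0$. Here $V\setminus 0$ has an open piece $\{q\neq 0\}$, which is fibered over $\mathbb{G}_m$ with fibers $SO_3/SO_2$, and a closed piece, the cone. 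In each stratum we compute using $\ZZ[t]$, with $c_2(V)$ restricting to $-t^2$ or a similar class.

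The relation $2c_3=0$ comes from the fact that $V$ has a $\mu_2$-invariant decomposition on the maximal torus: restricted to $T=SO_2$ we have $V=L\oplus L^{-1}\oplus \mathbf{1}$, so $c_3(V)|_T=0$. Also, the transfer from a subgroup of index $2$, namely the normalizer $O_2\subset SO_3$ of the torus, kills $2c_3$. Concretely, $c_3(V)$ is the Euler class of an odd-rank bundle with a nondegenerate form, and $-1\in O_3$ acts as $-1$ on $V$ while being orthogonal. This gives $c_3(V)=c_3(V^\vee)=-c_3(V)$. The last point is the cleanest argument, since $V\cong V^\vee$ via $q$. Injectivity of $\ZZ[c_2,c_3]/(2c_3)\to A^*_{SO_3}$ would be shown in two parts. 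Restriction to the torus detects polynomials in $c_2$ modulo torsion. To detect the $2$-torsion we restrict to the subgroup $\mu_2\times\mu_2\subset SO_3$ of diagonal sign matrices, whose Chow ring is $\ZZ[x,y]/(2x,2y)$. There $c_2\mapsto x^2+xy+y^2$ and $c_3\mapsto xy(x+y)$, and these are algebraically independent modulo $2$ (they are Dickson-type invariants).

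The main obstacle is the surjectivity step: we have to show that the excision sequence for the isotropic cone introduces no new generators beyond $c_2,c_3$. This requires computing the pushforward from the cone stratum carefully, in particular confirming that the class supported on the cone is expressible through $c_2$ and $c_3$. We expect the cone computation to be the delicate part of the argument, and we would cross-check it against the known answer $H^*(BSO_3;\ZZ)$.
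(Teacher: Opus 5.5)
The paper gives no proof of this statement. It cites Vezzosi and remarks that Pandharipande obtained the result via $\mathrm{PGL}_2\cong SO_3$, which is your route. Your version would also justify the paper's unsupported claim that the presentation holds over any algebraically closed $k$ with $\mathrm{char}\,k\neq 2$. Two of your steps are correct as written:
\begin{itemize}
\item the relation: $V\cong V^\vee$ via $q$, so $c_3(V)=-c_3(V)$;
\item injectivity: the torus detects $\mathbb{Z}[c_2]$, and $\mu_2\times\mu_2$, via the Dickson invariants $x^2+xy+y^2$ and $xy(x+y)$, detects the $2$-torsion.
\end{itemize}
The ``transfer from the index-$2$ subgroup $O_2\subset SO_3$'' is not valid. $O_2=N(T)$ has index $2$ over $T$, not in $SO_3$, and $SO_3/N(T)\cong\mathbb{P}^2\setminus C$ is not proper, so no Chow transfer exists. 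Drop it; the duality argument suffices.

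The real problem is that you misdiagnose where surjectivity is hard. Your first route already settles it. Totaro's open-subset argument shows $A^*_{O_3}=A^*_{\mathrm{GL}_3}(U)$ is a quotient of $\mathbb{Z}[c_1,c_2,c_3]$. Since $O_3=SO_3\times\mu_2$, the restriction $A^*_{O_3}\to A^*_{SO_3}$ is split by pullback along the projection $O_3\to SO_3$, hence surjective. So $A^*_{SO_3}$ is generated by $c_1(V)=0$, $c_2(V)$ and $c_3(V)$, and no cone computation is needed.

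The direct stratification you sketch instead is wrong as stated, in two places.
\begin{itemize}
\item \emph{The cone.} The punctured cone $Z$ is a single orbit $SO_3/\mathbb{G}_a$: the stabilizer of an isotropic vector is unipotent, while $SO_3/B$ is the conic. So $A^*_{SO_3}(Z)=\mathbb{Z}$ in degree $0$, and its image in $A^1_{SO_3}(V\setminus 0)$ is spanned by $[Z]$. This class is $0$ because $Z$ is cut out by the invariant function $q$, so the cone contributes nothing.
\item \emph{The open stratum.} This is where the content lies, and it is not governed by $SO_2$. Because $\lambda v$ and $-\lambda v$ lie in the same level set, $\{q\neq 0\}$ is not $\mathbb{G}_m\times SO_3/SO_2$. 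Instead $\{q\neq 0\}\cong SO_3\times^{O_2}(k_{\det}\setminus 0)$, where $O_2=\mathrm{Stab}_{SO_3}(kv)$ acts on $kv$ by $\det$. Hence $A^*_{SO_3}(\{q\neq 0\})=A^*_{O_2}/(c_1(\det))$. Using $A^*_{O_2}=\mathbb{Z}[c_1,c_2]/(2c_1)$ and $V|_{O_2}=\mathrm{std}\oplus\det$, this ring is $\mathbb{Z}[c_2]$, with $c_2(V)\mapsto c_2+c_1^2\equiv c_2$. That does give generation.
\end{itemize}
Had you used $A^*_{SO_2}=\mathbb{Z}[t]$ with $c_2(V)\mapsto -t^2$, your induction would break: $t$ has no preimage, and indeed $A^1_{SO_3}=0$. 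Finally, this route needs $A^*_{O_2}$, which is Totaro's argument again, so it does not actually avoid $O_n$.
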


We remark that the original result of \cite{Ve98} is stated over $\CC$, but the ring presentation is the same whenever the base field is algebraically closed and of characteristic not equal to 2. 

To conclude this section, we give a presentation of $A^*_{\PGL_2}(\Gr(2,\Sym^n W))$ for each $n$ even by considering $\Gr(2,\Sym^n W)$ as a $\PGL_2$-equivariant Grassmann bundle.

\begin{myprop}
\label{integral-Chow-ring-of-Grassmannian-equivar-PGL2-bd}

Denote $\Gr(2,n+1) = \Gr(2, \Sym^n W)$. Then, for each $n$ even:
\[
A^*_{\PGL_2}(\Gr(2,n+1)) 
\cong A^*_{\PGL_2}[s_1,s_2,q_1,\dots,q_{n-1}]/(s\cdot q = c^{\PGL_2}({\Sym}^n W))
\]
where
\[
s := c^{\PGL_2}(\mathcal{S}), q := c^{\PGL_2}(\mathcal{Q})
\]
are the $\PGL_2$-equivariant total Chern classes of the (rank 2) universal subbundle $\mathcal{S}$ and the (rank $n-1$) quotient bundle $\mathcal{Q}$ respectively.
\end{myprop}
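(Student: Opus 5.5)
The plan is to treat $\Gr(2,\Sym^n W)$, for $n$ even, as the Grassmann bundle $\Gr(2,V)\to \mathrm{pt}$ of a genuine $\PGL_2$-representation $V=\Sym^n W$, and then apply the equivariant version of the standard presentation of the Chow ring of a Grassmann bundle. First I check that $\Sym^n W$ really is a $\PGL_2$-representation when $n$ is even. The center $\mathbb{G}_m\subset \GL_2$ acts on $\Sym^n W$ by $\lambda^n$. Twisting by $\det^{-n/2}$ makes the scalars act trivially, so $\Sym^n W\otimes \det^{-n/2}$ descends to $\PGL_2$. Since the twist does not change the projectivization or the Grassmannian, $\Gr(2,\Sym^n W)$ carries the same $\PGL_2$-action. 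From now on $\Sym^n W$ denotes this $\PGL_2$-representation.

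Next I pass to approximations of the classifying space as in \cite{EG98}. Take an open $U\subset R$ in a $\PGL_2$-representation on which $\PGL_2$ acts freely, with complement of high codimension. Then
\[
\Gr(2,\Sym^n W)\times^{\PGL_2} U \;\to\; U/\PGL_2
\]
is the Grassmann bundle $\Gr(2,E)$ of the vector bundle $E=\Sym^n W\times^{\PGL_2}U$ over the smooth quasi-projective variety $U/\PGL_2$. The universal subbundle and quotient bundle on $\Gr(2,E)$ are the mixed spaces of $\mathcal{S}$ and $\mathcal{Q}$, so their Chern classes are exactly $s$ and $q$ in the relevant degrees. I then invoke the classical Grassmann bundle theorem (e.g.\ Fulton, Example 14.6.6, or Grothendieck's construction): for a rank $r$ bundle $E$ on a smooth $B$,
\[
A^*(\Gr(d,E))\cong A^*(B)[s_1,\dots,s_d,q_1,\dots,q_{r-d}]/(s\cdot q=c(E)).
\]
Here $d=2$, $r=n+1$ and $A^*(B)=A^*_{\PGL_2}$ in degrees below the codimension of $R\setminus U$. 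Letting that codimension go to infinity gives the stated presentation, with $c(E)=c^{\PGL_2}(\Sym^n W)$ coming from the exact sequence $0\to\mathcal{S}\to \Sym^n W\to\mathcal{Q}\to 0$.

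The main obstacle is making sure the Grassmann bundle theorem holds with integral coefficients over a base whose Chow ring has torsion, like $A^*_{\PGL_2}$. The usual proof only needs $E$ to be a vector bundle on a smooth variety, so this should be fine, but I would argue it concretely. Iterate projective bundles: realize the full flag bundle, or $\Gr(2,E)$, via $\mathbb{P}(E)$ and then $\mathbb{P}(E/\mathcal{O}(-1))$, and use the projective bundle formula at each step. This shows that $A^*(\Gr(2,E))$ is a free $A^*(B)$-module with a Schubert-type basis. Within that argument, the relation $s\cdot q=c(E)$ lets me eliminate the $q_i$ as polynomials in $s_1,s_2$ and the $c_j(E)$. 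The remaining relations are the vanishing of the degree $>n-1$ parts of $c(E)/s$, and a rank count against the free basis of $\binom{n+1}{2}$ elements confirms that no further relations exist. Everything is compatible with the approximations, so the stable limit yields the claimed isomorphism.
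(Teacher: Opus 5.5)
Your proposal is correct and takes the same route as the paper. Both view $\Gr(2,\Sym^n W)$, for $n$ even, as the Grassmann bundle of a $\PGL_2$-representation over a point and apply the integral Grassmann bundle formula, which the paper simply cites; your twist by $\det^{-n/2}$ to make $\Sym^n W$ an honest $\PGL_2$-representation and your passage through the Edidin--Graham approximations $U/\PGL_2$ are details the paper leaves implicit, not a different argument.
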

\begin{proof}
Since $\Sym^n W$ is a $\PGL_2$-representation if and only if $n$ is even, $\Gr(2,n+1)$ is a $\PGL_2$-equivariant Grassmann bundle over a point if and only if $n$ is even. By viewing $\Gr(2,n+1)$ as a $\PGL_2$-equivariant Grassmann bundle over a point whenever $n$ is even, we obtain the above presentation for $A^*_{\PGL_2}(\Gr(2,n+1))$ using the Grassmann bundle formula for Chow rings found in Theorem 9.18 of \cite{EH16} along with the $\PGL_2$-representation theory of $\Sym^n W$. We remark that the universal subbundle $\mathcal{S}$ and the universal quotient bundle $\mathcal{Q}$ over $\Gr(2,n+1)$ are $\PGL_2$-equivariant if and only if $n$ is even, since $\PGL_2$-equivariance of $\mathcal{S}$ and $\mathcal{Q}$ follows from $\Sym^n W$ being a $\PGL_2$-representation.
\end{proof}

Therefore, when the characteristic of the base field is not equal to 2, we obtain a ring presentation for $A^*_{\PGL_2}(\Gr(2,n+1))$ by combining Theorem \ref{integral-Chow-ring-of-classifying-space-of-PGL2} with Proposition \ref{integral-Chow-ring-of-Grassmannian-equivar-PGL2-bd}.

\subsection{Results from geometric invariant theory for pencils of binary cubics}
\label{subsection-basics-GIT-for-pencils-of-binary-cubics}

In this section, we give a brief summary of useful results regarding the classification of $\PGL_2$-orbits of $\Gr(2,4)$ from the paper by Newstead \cite{Ne81} and the paper by Wall \cite{Wa83}. Their results are mainly based on geometric invariant theory (GIT), for which we use the book by Newstead \cite{Ne78} as our main source of reference. 

For the following, let $k$ be an algebraically closed field of characteristic not equal to 2 or 3. We recall the $\PGL_2$-invariants of binary quartics in $\mathbb{P}(\Sym^4 W)$: these invariants were used by Newstead to construct $\PGL_2$-invariants of pencils of binary cubics \cite{Ne81}. A binary quartic of the form (considered as a point in $\mathbb{P}(\Sym^4 W)$)
\[
c_0t_0^4+4c_1t_0^3t_1+6c_2t_0^2t_1^2+4c_3t_0t_1^3+c_4t_1^4
\]
has $\PGL_2$-invariants given by
\begin{align*}
I &:= c_0c_4-4c_1c_3+3c_2^2, \\
J &:= c_0c_2c_4+2c_1c_2c_3-c_0c_3^2-c_1^2c_4-c_2^3,
\end{align*}
and it is shown in \cite{Ne78} that $I, J$ generate the ring of $\PGL_2$-invariants of binary quartics. 

Next, let $p\in\Gr(2,4)$ be the pencil generated by distinct binary cubics
\[
f=a_0 t_0^3 + a_1 t_0^2t_1 + a_2 t_0t_1^2 + a_3t_1^3, 
g=b_0 t_0^3 + b_1 t_0^2t_1 + b_2 t_0t_1^2 + b_3t_1^3.
\]
From Newstead \cite{Ne81}, the Wronskian map $\Wr: \Gr(2,4) \rightarrow \mathbb{P}^4$ may be written explicitly as
\[
\Wr: \Gr(2,4) \rightarrow \mathbb{P}^4, p \mapsto (p_{01}:2p_{02}:3p_{03}+p_{12}:-2p_{31}:p_{23}),
\]
with $p_{ij}:= a_ib_j-a_jb_i$ being the Pl\"{u}cker coordinates of $p$ defined by the coefficients of $f,g$.

Then, the $\PGL_2$-invariants of $p$ are given by
\[
I' := 3p_{03}-p_{12}, \text{ where } I'^2 = 12I,
\]
and $J$ calculated as above using the Pl\"{u}cker coordinates of $p$ \cite{Ne81}. It is shown in \cite{Ne81} that $I',J$ generate the ring of $\PGL_2$-invariants of pencils of binary cubics, and classification of all the $\PGL_2$-stable orbits may also be obtained from these $\PGL_2$-invariants.

\begin{mythm}[\cite{Ne81}]
\label{GIT-quotient-pencils-of-binary-cubics}

Let
\[
V = \{ p \in \Gr(2,4) \mid I', J \text{ are not both zero at } p\} \subset \Gr(2,4),
\]
and note that this is a Zariski-open subset of $\Gr(2,4)$. Also, define a morphism
\[
\PhiNew: V \rightarrow \mathbb{P}^1, 
p \mapsto (I'^3|_p:J|_p).
 \]
Then, $(\mathbb{P}^1, \PhiNew)$ is a good quotient of $V$ by $\PGL_2$.

In particular: 
\begin{enumerate}
\item The locus of $\PGL_2$-semistable points of $\Gr(2,4)$ is given by
\[
\Gr(2,4)^{ss} = V.
\]
\item Let $\FNew:= \{ (\pm 6^3: 1) \} \subset \mathbb{P}^1$. Then, the locus of $\PGL_2$-stable points of $\Gr(2,4)$ is given by
\[
\Gr(2,4)^s = \{ p \in \Gr(2,4) \mid (I'|_p, J|_p) \neq (0,0), I'^3|_p \neq \pm 6^3 J|_p \},
\]
and $\PhiNew$ restricted to $\Gr(2,4)^s$ is a geometric quotient with image
\[
\PhiNew(\Gr(2,4)^s) = \mathbb{P}^1 - \FNew.
\]
\end{enumerate}
\end{mythm}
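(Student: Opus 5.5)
The plan is to reduce everything to GIT for binary quartics by pulling back along the Wronskian. The first ingredient is that $\Wr:\Gr(2,4)\to\mathbb{P}^4$ is $\PGL_2$-equivariant and finite. Its degree is $2$, the number of standard Young tableaux of shape $2\times 2$.

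\textbf{Step 1: identify the invariants.} The ring of invariants of $\Gr(2,4)$ is computed in the Plücker embedding. Using the explicit formula for $\Wr$, the quartic invariants pull back to Plücker polynomials: $\Wr^*I$ has degree $2$ and $\Wr^*J$ has degree $3$. One checks directly that $\Wr^*I=I'^2/12$ with $I'=3p_{03}-p_{12}$, using the Plücker relation $p_{01}p_{23}-p_{02}p_{13}+p_{03}p_{12}=0$ to simplify.

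\textbf{Step 2: show $I',J$ generate the invariant ring.} Since $\Wr$ is finite, the invariant ring of $\Gr(2,4)$ is integral over $k[\Wr^*I,\Wr^*J]$, so it has Krull dimension $2$. The main obstacle is proving that $I'$ and $J$ actually generate it. My first attempt is a dimension count: compute $\dim H^0(\Gr(2,4),\mathcal{O}(m))^{\PGL_2}$ by a character/Hilbert series computation, or by Cayley–Sylvester–type counting for $\Sym^2$-type plethysms, and compare it with the Hilbert series $1/((1-t)(1-t^3))$ of $k[I',J]$. Here $I'$ has degree $1$ and $J$ degree $3$, and the two are algebraically independent because $I,J$ are and $\Wr$ is dominant.

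\textbf{Step 3: semistable and stable loci.} With generators in hand, $\Gr(2,4)^{ss}$ is the complement of the common zero set of the invariants, which is $V$. The quotient is $\mathrm{Proj}\,k[I',J]$ with weights $(1,3)$, which is $\mathbb{P}^1$ with coordinate $(I'^3:J)$. This gives the good quotient $\PhiNew$.

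\textbf{Step 4: find the strictly semistable fibres.} I would use the Hilbert–Mumford criterion. For a one-parameter subgroup $\mathrm{diag}(t,t^{-1})$, the weights on $\Sym^3W$ are $3,1,-1,-3$, so the Plücker coordinates $p_{ij}$ have weights $6-2(i+j)$. A pencil is non-stable exactly when, in some coordinates, all Plücker coordinates of negative weight vanish. Up to $\PGL_2$, the non-stable but semistable configurations are:
\begin{itemize}
\item pencils containing $t_0^2t_1$ and $t_0t_1^2$ only in limit position;
\item pencils containing $t_0t_1^2$ and $t_0^3$;
\item pencils spanned by $t_0^3$ and $t_1^3$ together with their degenerations.
\end{itemize}
The concrete test is that a non-stable pencil has $\Wr$ with a root of multiplicity at least $2$ arranged so that its closed-orbit representative has a $\mathbb{G}_m$-stabilizer. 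I would evaluate $(I'^3:J)$ on these torus-fixed representatives, for example $\langle t_0^3,t_1^3\rangle$ and $\langle t_0^2t_1,t_0t_1^2\rangle$. I expect them to give exactly $(\pm6^3:1)$. Then $\Gr(2,4)^s=\PhiNew^{-1}(\mathbb{P}^1-\FNew)$ is saturated.

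\textbf{Step 5: geometric quotient.} Over $\mathbb{P}^1-\FNew$ all orbits are closed with finite stabilizers. The restriction of a good quotient to a saturated open set on which all orbits are closed is a geometric quotient. Surjectivity onto $\mathbb{P}^1-\FNew$ holds because a good quotient is surjective.
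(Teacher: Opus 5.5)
The paper gives no proof of this statement; it is quoted from \cite{Ne81}, so your outline has to stand on its own. Its architecture is the right one: invariants, then the semistable locus as their non-vanishing locus, then $\mathrm{Proj}$ of a weighted polynomial ring, then Hilbert--Mumford for the strictly semistable fibres, then saturation. Steps 1, 3 and 5 are fine.

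\textbf{The gap is Step 2.} This step is the real content of the theorem, and you never show that $I'$ and $J$ generate the invariant ring. The Hilbert-series comparison is only announced. Moreover, a character or Cayley--Sylvester count computes dimensions of invariants only in characteristic $0$. The statement is made for every algebraically closed $k$ with $\mathrm{char}\,k\neq 2,3$, where $\PGL_2$-modules need not be semisimple.

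The missing idea is already in your Step 1 computation, which shows more than $\Wr^*I=I'^2/12$ on $\Gr(2,4)$. Write $\Wr(p)=(c_0:4c_1:6c_2:4c_3:c_4)$. Before reducing modulo the Plücker relation one has
\[
12\,I(c)-I'^2=12\,(p_{01}p_{23}-p_{02}p_{13}+p_{03}p_{12}).
\]
Also, $(c_0,\dots,c_4,I')$ is an invertible linear change of Plücker coordinates when $\mathrm{char}\,k\neq 2,3$. This is the splitting $\Lambda^2\Sym^3W\cong\Sym^4W\oplus k$, with $\Wr$ and $I'$ the two projections. In these coordinates $\Gr(2,4)$ is the invariant quadric $I'^2=12I$.

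Hence the Plücker coordinate ring is $A[I']/(I'^2-12I)=A\oplus A\,I'$, with $A=k[c_0,\dots,c_4]$ and $\PGL_2$ fixing $I'$. Its invariants are therefore $k[I,J]\oplus k[I,J]\,I'=k[I',J]$. This uses only the fact that $I,J$ generate the invariants of binary quartics. It needs no dimension count and no exactness of taking invariants.

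\textbf{Step 4 also needs repair.} Your list of configurations is wrong in places. The pencil $\langle t_0^3,t_0t_1^2\rangle$ has $I'=J=0$, so it is unstable, not semistable, and ``only in limit position'' is not a condition. Evaluating on two torus-fixed pencils ``for example'' also leaves open whether those are the only closed orbits to check.

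You need no list at all:
\begin{enumerate}
\item Once Hilbert--Mumford puts a non-stable pencil in coordinates with $p_{13}=p_{23}=0$, the Plücker relation gives $p_{03}p_{12}=0$.
\item At such a point $I'=3p_{03}-p_{12}$ and $J=-c_2^3=-(3p_{03}+p_{12})^3/6^3$, hence $I'^3=\pm 6^3J$.
\end{enumerate}
Equivalently, in your own terms: $\Wr(p)$ has a repeated root, and the discriminant is proportional to $I^3-27J^2$, hence to $I'^6-6^6J^2$ on $\Gr(2,4)$.

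For the converse, the fibres over $(\pm 6^3:1)$ contain the non-stable pencils $\langle t_0^3,t_1^3\rangle$ and $\langle t_0^2t_1,t_0t_1^2\rangle$. A fibre of a good quotient through a stable point is a single orbit. That orbit is closed of dimension $\dim\PGL_2$ and lies in the closure of every orbit in the fibre. So these two fibres contain no stable points. Together these give $\Gr(2,4)^s=\PhiNew^{-1}(\mathbb{P}^1-\FNew)$, after which your Step 5 goes through.
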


The above theorem allows one to compute unique representatives for each one of the stable $\PGL_2$-orbits. Wall (\cite{Wa83}, cf. \cite{Wa98}) also obtained a one-parameter family of representatives for the stable $\PGL_2$-orbits with particularly simple expressions. For future reference, we record Wall's representatives and give a proof sketch that they indeed exist in every $\PGL_2$-stable orbit.

\begin{myprop}[\cite{Wa83}, cf. \cite{Wa98}]
\label{Wall-reps-stable-orbits}

For any $\rho\in k$, denote
\[
p_{\rho} = \langle f_{\rho}, g_{\rho} \rangle := \langle t_0^3+\rho t_0t_1^2, \rho t_0^2t_1 + t_1^3 \rangle \in \Gr(2,4),
\]
and set
\[
p_{\infty} :=  \langle t_0t_1^2, t_0^2t_1 \rangle \in\Gr(2,4).
\]
Then, for every $\PGL_2$-stable point $p\in \Gr(2,4)$, there exists some $p_{\rho}$ of the above form such that
\[
p \in O_{\PGL_2}(p_{\rho}),
\]
where $p_{\rho}$ is such that
\[
\rho \neq 0, \infty, \pm 1, \pm 3.
\]
\end{myprop}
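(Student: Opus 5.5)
The plan is to use Theorem \ref{GIT-quotient-pencils-of-binary-cubics}: on $\Gr(2,4)^s$ the map $\PhiNew$ is a geometric quotient, so its fibres over $\mathbb{P}^1-\FNew$ are exactly the stable $\PGL_2$-orbits. It therefore suffices to carry out the following three steps.
\begin{enumerate}
\item Compute $\PhiNew(p_\rho)$ explicitly as a function of $\rho$.
\item Show that $\rho \mapsto \PhiNew(p_\rho)$ hits every point of $\mathbb{P}^1-\FNew$.
\item Check that the excluded values $\rho = 0,\infty,\pm1,\pm3$ are exactly those for which $p_\rho$ fails to be stable, i.e.\ maps into $\FNew$ or has $I'=J=0$.
\end{enumerate}
Then for a stable $p$ we can pick $\rho$ with $\PhiNew(p_\rho)=\PhiNew(p)$. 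Such a $p_\rho$ lies in $V$ with image outside $\FNew$, so it is stable, and the geometric quotient property gives $p\in O_{\PGL_2}(p_\rho)$. Note that $f_\rho,g_\rho$ are always linearly independent, since the $t_0^3$ and $t_1^3$ coefficients separate them, so $p_\rho$ is a genuine point of $\Gr(2,4)$.

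For the computation, the coefficient vectors are $a=(1,0,\rho,0)$ and $b=(0,\rho,0,1)$, so
\[
p_{01}=\rho,\quad p_{02}=0,\quad p_{03}=1,\quad p_{12}=-\rho^2,\quad p_{13}=0,\quad p_{23}=\rho.
\]
Hence $\Wr(p_\rho)=(\rho:0:3-\rho^2:0:\rho)$. In the normalization $c_0t_0^4+4c_1t_0^3t_1+6c_2t_0^2t_1^2+\cdots$ this gives $c_0=c_4=\rho$, $c_1=c_3=0$ and $c_2=(3-\rho^2)/6$; here $\mathrm{char}\,k\neq 2,3$ is used. Substituting into the formulas for $I'$ and $J$ yields
\[
I'=3+\rho^2,\qquad J=c_0c_2c_4-c_2^3=c_2(\rho^2-c_2^2).
\]
Thus $\rho\mapsto (I'^3:J)$ is a rational map $\mathbb{P}^1\to\mathbb{P}^1$ given by two polynomials of degree $6$ in $\rho$. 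I will check that it is nonconstant, for instance by comparing its values at two points computed below. A nonconstant morphism $\mathbb{P}^1\to\mathbb{P}^1$ over an algebraically closed field is surjective, so every point of $\mathbb{P}^1-\FNew$ equals $\PhiNew(p_\rho)$ for some $\rho\in\mathbb{P}^1$.

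It remains to check the special values, and I also need that $I'$ and $J$ never vanish simultaneously.
\begin{itemize}
\item If $I'=0$ then $\rho^2=-3$, so $c_2=1$ and $J=-4\neq 0$.
\item At $\rho=0$: $I'^3=27$ and $J=-1/8$, giving the ratio $-6^3$.
\item At $\rho=\pm1$: $I'^3=64$ and $J=8/27$, giving $6^3$.
\item At $\rho=\pm3$: $I'^3=12^3$ and $J=-8$, giving $-6^3$.
\item At $\rho=\infty$ (the pencil $p_\infty$, obtained as the limit after rescaling): the leading terms give $I'^3\sim\rho^6$ and $J\sim\rho^6/216$, hence $6^3$.
\end{itemize}
So all six excluded values land in $\FNew$, and any $\rho$ chosen to hit a point of $\mathbb{P}^1-\FNew$ automatically avoids them, which gives the stated restriction on $\rho$. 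Since the points $\rho=0$ and $\rho=\pm3$ map to $-6^3$ while $\rho=\pm1$ map to $6^3$, the map is indeed nonconstant.

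The main obstacle is bookkeeping rather than ideas. One must get the normalization constants in the Wronskian and in $I,J$ right, since the identification of $\FNew$ with $\{\pm6^3\}$ depends on them. One must also treat $\rho=\infty$ carefully: the family $p_\rho$ must be shown to extend to $p_\infty$ at $\rho=\infty$, consistently with the homogenized map. If the limit computation is awkward, I would instead compute the invariants of $\langle t_0t_1^2,t_0^2t_1\rangle$ directly from its Plücker coordinates ($p_{12}=-1$, all others zero), giving $I'=1$ and $J=1/216$.
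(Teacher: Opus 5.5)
Your proposal is correct and follows the same route as the paper. You compute $I'|_{p_\rho}=3+\rho^2$ and $J|_{p_\rho}$ (your $c_2(\rho^2-c_2^2)$ agrees with the paper's factored form), show that $\rho\mapsto\PhiNew(p_\rho)$ covers $\mathbb{P}^1-\FNew$, check that the six excluded values land in $\FNew$, and conclude with the geometric quotient of Theorem~\ref{GIT-quotient-pencils-of-binary-cubics}; your surjectivity argument via a nonconstant morphism $\mathbb{P}^1\to\mathbb{P}^1$ and your check that $I'$ and $J$ never vanish together on the family fill in steps the paper leaves to ``direct computations.''
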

\begin{proof}
The $\PGL_2$-invariants $I',J$ evaluated at $p_{\rho}$ are given by
\begin{align}
I'|_{p_\rho} &= 3+\rho^2, \label{I-prime-for-Wall-rep}\\
J|_{p_\rho} &= \frac{1}{6^3}(\rho^2-3)(\rho^2-6\rho-3)(\rho^2+6\rho-3). \label{J-prime-for-Wall-rep}
\end{align}
Hence, from direct computations using the good quotient $(V,\PhiNew)$ from Theorem \ref{GIT-quotient-pencils-of-binary-cubics}, one may verify that
\begin{align*}
\PhiNew(\{ p_{\rho} \mid \rho\neq 0, \infty, \pm 1, \pm 3 \}) 
&= \mathbb{P}^1 - \{(\pm 6^3:1)\} \\
&= \PhiNew(\Gr(2,4)^s).
\end{align*}
Moreover, whenever $\rho = 0, \infty, \pm 1, \pm 3$, we have that
\begin{align*}
\PhiNew(p_{\rho}) 
&\in \{(\pm 6^3:1)\},
\end{align*}
implying that $p_{\rho}$ is not $\PGL_2$-stable by Theorem \ref{GIT-quotient-pencils-of-binary-cubics}.
\end{proof}

\begin{myrmk}
\label{rel-bw-Newstead-and-Wall-reps}

Generically, there exists six representatives of the form $p_{\rho}$ from Proposition \ref{Wall-reps-stable-orbits} for every $\PGL_2$-stable orbit in $\Gr(2,4)$. This can be shown as follows: define the map
\[
\PhiWall: \mathbb{P}^1 - \FWall \rightarrow \Gr(2,4)^s, (\rho:1) \mapsto p_{\rho},
\]
where $\FWall$ corresponds to the set of non-$\PGL_2$-stable $p_{\rho}$'s:
\[
\FWall := \{ (\rho:1) \in \mathbb{P}^1 \mid \rho = 0, \infty, \pm 1, \pm 3 \} \subset \mathbb{P}^1.
\]
Also, define the map
\[
f: \mathbb{P}^1 - \FWall \rightarrow \mathbb{P}^1 - \FNew,
(\rho:1) \mapsto \left( I'^3|_{p_{\rho}}: J|_{p_{\rho}} \right)
\]
sending every $(\rho:1)$ to the $\PGL_2$-invariants of the corresponding $p_{\rho}$. Then, we have a commutative diagram
\[
  \begin{tikzcd}
  \mathbb{P}^1 - \FWall \arrow{r}{\PhiWall} \arrow{d}{f}
  & \Gr(2,4)^s \arrow{ld}{\PhiNew} \\
  \mathbb{P}^1 - \FNew &
  \end{tikzcd}
\]
so it follows that $f = \PhiNew\circ\PhiWall$ is a generic 6-to-1 map from the expressions of $I'|_{p_{\rho}}$ and $J|_{p_{\rho}}$ in Equations \ref{I-prime-for-Wall-rep} and \ref{J-prime-for-Wall-rep}.

Moreover, we shall see in Proposition \ref{S4-action-on-X} that the map $\PhiWall$ is $S_4$-equivariant when $\mathbb{P}^1$ is endowed with the $S_4$-action induced by the unique two-dimensional irreducible representation of $S_4$. This shall be a crucial element for the proof of our first main theorem, Theorem \ref{main-thm-S4-symmetry-stable-locus}.
\end{myrmk}

\section{Statement of main results}
\label{statement-of-main-results}

Throughout this section, fix the subgroup $S_4:=N(D_4) \leq \PGL_2$ as the normalizer of the fixed dihedral group of order 4,
\[
D_4 := \left\langle
\begin{pmatrix}
1 & 0\\
0 & -1\\
\end{pmatrix},
\begin{pmatrix}
0 & -1\\
1 & 0\\
\end{pmatrix}
\right\rangle \leq {\PGL}_2.
\]
Our first main result shall be the construction of the isomorphism of quotient stacks from Equation \ref{key-quot-stack-isom}.

\begin{mythm}
\label{main-thm-S4-symmetry-stable-locus}

Let $\Gr(2,4)^s$ be the collection of all $\PGL_2$-stable points of $\Gr(2,4)$, and let $\FWall \subset \mathbb{P}^1$ be defined as in Remark \ref{rel-bw-Newstead-and-Wall-reps}. Also, endow $\mathbb{P}^1$ with the $S_4$-action induced by the unique two-dimensional irreducible representation of $S_4$. Then, the map $\PhiWall$ defined in Remark \ref{rel-bw-Newstead-and-Wall-reps} is $S_4$-equivariant and extends to a morphism of schemes
\[
\Phi: {\PGL}_2\times (\mathbb{P}^1 - \FWall) \rightarrow \Gr(2,4)^s, (A, (\rho:1)) \mapsto A\cdot p_{\rho},
\]
and $\Phi$ descends to a $PGL_2$-equivariant isomorphism of algebraic spaces
\[
{\PGL}_2\times_{S_4} (\mathbb{P}^1 - \FWall) \cong Gr(2,4)^s
\]
where the action of $S_4$ on ${\PGL}_2 \times (\mathbb{P}^1 - \FWall)$ is given by the diagonal action:
\[
S_4 \times ({\PGL}_2 \times (\mathbb{P}^1 - \FWall)) \rightarrow {\PGL}_2 \times (\mathbb{P}^1 - \FWall),
(\sigma, (A, (x:y))) \mapsto (A\sigma, \sigma^{-1}\cdot(x:y)).
\]
Hence, there exists an induced isomorphism of quotient stacks
\[
[({\PGL}_2\times_{S_4} (\mathbb{P}^1 - \FWall))/{\PGL}_2] \cong [\Gr(2,4)^s/{\PGL}_2],
\]
and it follows that
\[
[(\mathbb{P}^1 - \FWall)/S_4] \cong [\Gr(2,4)^s/{\PGL}_2]
\]
as quotient stacks defined over $\mathrm{Spec}(k)$.
\end{mythm}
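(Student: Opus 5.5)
Our strategy is to verify the $S_4$-equivariance of $\PhiWall$ directly, and then show that $\Phi$ is a principal $S_4$-bundle over $\Gr(2,4)^s$. Everything else then follows formally.

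\textbf{Step 1: equivariance.} First we check that for each generator $\sigma$ of $S_4=N(D_4)$, the pencil $\sigma\cdot p_\rho$ equals $p_{\sigma\cdot\rho}$ for an explicit M\"obius transformation of $\rho$. The relevant generators are the two matrices generating $D_4$, the element $\mathrm{diag}(1,i)$ of order $4$, and an order-$3$ element such as $\begin{pmatrix}1& i\\ 1 & -i\end{pmatrix}$. For instance:
\begin{itemize}
\item $t_0\mapsto t_1$, $t_1\mapsto t_0$ swaps $f_\rho$ and $g_\rho$, so it fixes $\rho$.
\item $t_1\mapsto -t_1$ sends $\langle f_\rho,g_\rho\rangle$ to itself, so it also fixes $\rho$.
\end{itemize}
Thus $D_4$ acts trivially on the $\rho$-line. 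This is consistent with the two-dimensional irreducible representation of $S_4$, which factors through $S_4/V_4\cong S_3$. The remaining generators should act on $\rho$ by the order-$2$ and order-$3$ transformations permuting $\FWall$. For example, $\mathrm{diag}(1,i)$ should give $\rho\mapsto -\rho$, and the order-$3$ element should give something like $\rho\mapsto (3-\rho)/(1+\rho)$, which permutes $\{1,-3,\infty\}$ and $\{-1,3,0\}$. This identifies the action with the $S_3$-action on $\mathbb{P}^1$ coming from the 2-dimensional representation. It is consistent with the generic fibre of $f$ having $6=|S_3|$ points, as noted in Remark~\ref{rel-bw-Newstead-and-Wall-reps}. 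With this in hand, $\Phi(A\sigma,\sigma^{-1}\rho)=A\sigma\sigma^{-1}p_\rho=\Phi(A,\rho)$, so $\Phi$ is $S_4$-invariant and $\PGL_2$-equivariant for left multiplication.

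\textbf{Step 2: bijectivity on the quotient.} Surjectivity of $\Phi$ onto $\Gr(2,4)^s$ is Proposition~\ref{Wall-reps-stable-orbits}. For injectivity modulo $S_4$, suppose $A p_\rho = B p_{\rho'}$. Then $f(\rho)=f(\rho')$. By the degree-$6$ computation and Step 1, the fibre of $f$ is exactly the $S_3$-orbit of $\rho$, so after acting by some $\sigma$ we may assume $\rho'=\rho$. It remains to show that $\mathrm{Stab}_{\PGL_2}(p_\rho)$ equals $\mathrm{Stab}_{S_4}(\rho)$. This is the main obstacle.
\begin{itemize}
\item Since $p_\rho$ is stable, its stabilizer is finite.
\item It preserves the Wronskian $\Wr(p_\rho)$, a binary quartic. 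For stable $p_\rho$ this quartic has four distinct roots (assuming $I^3\neq 27J^2$; any exceptional values must be handled separately).
\item Hence the stabilizer lies in the stabilizer of four points of $\mathbb{P}^1$. This group contains the Klein group $D_4$, which fixes every $p_\rho$, and is $D_4$, $D_8$ or $A_4$ according to the $j$-invariant.
\end{itemize}
We then verify by hand that the extra elements fixing $p_\rho$ are exactly those of $S_4$ fixing $\rho$. This includes the special values of $\rho$ where the $S_3$-action has fixed points: $\rho$ with $\rho=-\rho$, which are excluded, and the fixed points of the order-$3$ element. Because the computation is explicit, we can carry it out case by case.

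\textbf{Step 3: from bijectivity to isomorphism.} We show that $S_4$ acts freely on $\PGL_2\times(\mathbb{P}^1-\FWall)$; it already acts freely on the first factor. So $\PGL_2\times_{S_4}(\mathbb{P}^1-\FWall)$ is a smooth algebraic space of dimension $4$, and $\Phi$ descends to a $\PGL_2$-equivariant morphism $\bar\Phi$ of it to $\Gr(2,4)^s$. Step 2 makes $\bar\Phi$ bijective on $k$-points. To conclude that it is an isomorphism, we check that $\Phi$ is \'etale, i.e.\ its differential is injective. Equivalently, the $\rho$-direction is transverse to the $\PGL_2$-orbit, which follows because $f=\PhiNew\circ\PhiWall$ is \'etale away from its ramification. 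Where $f$ ramifies, we argue instead via the free-action/stabilizer count. Then $\bar\Phi$ is a bijective \'etale morphism onto a normal target, hence an isomorphism.

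Finally, the standard identification $[(\PGL_2\times_{S_4}Y)/\PGL_2]\cong[Y/S_4]$ yields the stack statements.
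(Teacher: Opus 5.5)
Your outline is sound and reaches the theorem by a genuinely different route from the paper's.

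\textbf{The paper's route.} It computes stabilizers intrinsically. An element of order $\ge 4$ fixing a pencil forces that pencil to be spanned by monomials in suitable coordinates, and such pencils are not stable. Elements of order $3$ occur only on $\mathcal{O}_{A_4}$. Klein's classification then leaves $D_4$ or $A_4$. Next it shows that stable $p_\rho,p_{\rho'}$ can be $\mathrm{PGL}_2$-equivalent only via $S_4$, because any $A$ with $Ap_\rho=p_{\rho'}$ must normalize their common stabilizer. It finishes with the fact that an \'etale surjection $X\to Y$ identifies $Y$ with $X/(X\times_Y X)$, inferring \'etaleness from a $24$-point fibre count and miracle flatness.

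\textbf{Your route.} (i) You obtain the orbit relation from $f$ being $S_3$-invariant of degree $6=|S_3|$. So $f$ factors through $\mathbb{P}^1\to\mathbb{P}^1/S_3$ by a degree-one map, and its fibres are exactly $S_3$-orbits, non-free ones included. (ii) You bound $\mathrm{Stab}_{\mathrm{PGL}_2}(p_\rho)$ by the stabilizer $G$ of the roots $\pm a,\pm a^{-1}$ of $\Wr(p_\rho)\propto \rho t_0^4+(3-\rho^2)t_0^2t_1^2+\rho t_1^4$. These roots are distinct for every $\rho\notin\FWall$, since $(3-\rho^2)^2-4\rho^2=(\rho^2-1)(\rho^2-9)$, so no exceptional values arise. (iii) You conclude by bijectivity plus a tangent-space check.

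\textbf{Comparison.} Your route is more economical, and the case-by-case verification in (ii) is unnecessary. Whether $G$ is $D_4$, $D_8$ or $A_4$, the group $D_4$ is normal in $G$, so $G\le N(D_4)=S_4$. Hence every $\tau\in G$ satisfies $\tau p_\rho=p_{\tau\rho}$. Since $p_\rho$ determines $\rho$ (up to scalar, $f_\rho$ is its only member without a $t_1^3$ term), you get $\mathrm{Stab}_{\mathrm{PGL}_2}(p_\rho)=\mathrm{Stab}_{S_4}(\rho)$ at once. Your approach therefore produces the paper's stabilizer results ($A_4$ at $\rho=\pm i\sqrt3$, $D_4$ elsewhere) as output rather than needing them as input. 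Your differential check is also a more direct certificate of \'etaleness than a fibre count.

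\textbf{Two local repairs are needed.}

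First, your guessed order-$3$ action is wrong. The map $\rho\mapsto(3-\rho)/(1+\rho)$ is an involution: it fixes $1$ and $-3$, and it is the paper's $\sigma_2$-action. The sets $\{1,-3,\infty\}$ and $\{-1,3,0\}$ are not even stable under $\rho\mapsto-\rho$. The order-$3$ transformation is $\rho\mapsto(\rho-3)/(\rho+1)$. It cycles $1\to-1\to\infty$ and $3\to 0\to -3$, and its fixed points are $\pm i\sqrt3$.

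Second, on $\mathbb{P}^1-\FWall$ the map $f$ ramifies exactly at $\rho=\pm i\sqrt3$, i.e.\ over $\mathcal{O}_{A_4}$. There a ``free-action/stabilizer count'' cannot show that $d\Phi$ is injective, because counting points in fibres says nothing about tangent maps. Either of the following fixes this.
\begin{enumerate}
\item Use $I'$. Near $p_{\pm i\sqrt3}$ the function $I'/p_{03}$ vanishes along $\mathcal{O}_{A_4}$. Meanwhile $(I'/p_{03})(p_\rho)=3+\rho^2$ has derivative $2\rho\neq 0$ there. So $d\PhiWall(\partial_\rho)$ is transverse to the orbit.
\item Drop everywhere-\'etaleness. $\bar\Phi$ is bijective, quasi-finite and \'etale on a dense open set, hence birational. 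Zariski's Main Theorem with the normal target $\Gr(2,4)^s$ then makes it an isomorphism.
\end{enumerate}
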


\begin{myrmk}
At the fundamental level, the isomorphism of Equation \ref{key-quot-stack-isom} is constructed based on Proposition \ref{Wall-reps-stable-orbits}. One may wonder if there also exists an isomorphism of quotient stacks
\[
[\mathcal{U}/{\PGL}_2] \cong [\mathbb{P}^1/S_4]
\]
where $\mathcal{U} \subset \Gr(2,4)$ is the collection of all points in $\Gr(2,4)$ in a $\PGL_2$-orbit of some $p_{\rho}$, given that $S_4$ acts on the space of all $p_{\rho}$'s. Unfortunately, there is no such extension: in the proof of Proposition \ref{Wall-reps-stable-orbits}, we saw that the $p_{\rho}$'s with $(\rho:1)\in \FWall$ correspond to non-stable $\PGL_2$-orbits, and the $\PGL_2$-stabilizers of those $p_{\rho}$'s turn out to be $N(T_{\PGL_2})$ by direct calculations (see Proposition \ref{GIT-classification-all-PGL2-orbits-pencils-of-binary-cubics}). 

In this way, there is no direct way of extending the isomorphism of Equation \ref{key-quot-stack-isom} to cover the semistable locus $\Gr(2,4)^{ss}$.
\end{myrmk}

Our second main result is a presentation of the ring $A^*_{\PGL_2}(\Gr(2,4)^s)$, which we compute using the isomorphism of quotient stacks given by Equation \ref{key-quot-stack-isom}. The presentation is based on Lemma \ref{S4-equivar-Chow-ring-of-a-point} to be demonstrated in Section \ref{computation-equivar-Chow-ring-of-locus-of-stable-pts}:
\[
A^*_{S_4} = \ZZ[\alpha, \nu, \zeta_1, \eta]/(2\alpha=2\nu=4\zeta_1=3\eta=0, \alpha\nu^{j}=\alpha^{j+1}(\zeta_1+\alpha^2)^j : j \geq 1)
\]
where the degrees of the generators are given by:
\[
\alpha \in A^1_{S_4}, \zeta_1, \eta \in A^2_{S_4}, \nu \in A^3_{S_4}.
\]

\begin{mythm}
\label{main-thm-Chow-ring-stable-locus}

Denote the collection of all stable points in $\Gr(2,4)$ under the natural $\PGL_2$-action by $\Gr(2,4)^s\subset \Gr(2,4)$. The $\PGL_2$-equivariant Chow ring of $\Gr(2,4)^s$ is given by
\begin{align*}
A^*_{\PGL_2}(\Gr(2,4)^s)
&\cong A^*_{S_4}(\mathbb{P}^1 - \FWall) \\
&\cong A^*_{S_4}[\zeta] / (\zeta^2+c_1^{S_4}(V)\zeta+c_2^{S_4}(V), 3\zeta, 3c_1^{S_4}(V)),
\end{align*}
where $\FWall \subset \mathbb{P}^1$ is the finite subset as given in Theorem \ref{main-thm-S4-symmetry-stable-locus}, 
\[
\zeta= c_1^{S_4}(\mathcal{O}(1)) \in A^1_{S_4}(\mathbb{P}^1 - \FWall)
\]
is the (pullback of) the $S_4$-equivariant first Chern class of a hyperplane in $\mathbb{P}^1$, and $c_1^{S_4}(V)$ is the $S_4$-equivariant first Chern class of the unique two-dimensional irreducible representation $V$ of $S_4$.
\end{mythm}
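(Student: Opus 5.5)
The first isomorphism is immediate from Theorem \ref{main-thm-S4-symmetry-stable-locus}. The isomorphism of quotient stacks $[(\mathbb{P}^1 - \FWall)/S_4] \cong [\Gr(2,4)^s/\PGL_2]$ gives $A^*_{\PGL_2}(\Gr(2,4)^s) \cong A^*_{S_4}(\mathbb{P}^1 - \FWall)$, since by \cite{EG98} the Chow ring of a smooth quotient stack does not depend on the chosen presentation. The remaining work is to compute $A^*_{S_4}(\mathbb{P}^1 - \FWall)$. I would do this in three steps: projective bundle formula, then excision, then identification of the image of the excision map.

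\textbf{Step 1: the projective bundle formula.} The $S_4$-action on $\mathbb{P}^1$ comes from the linear two-dimensional representation $V$, so $\mathbb{P}^1 = \mathbb{P}(V)$ is an $S_4$-equivariant projective bundle over a point. The projective bundle formula \cite{EG98} then gives
\[
A^*_{S_4}(\mathbb{P}^1) \cong A^*_{S_4}[\zeta]/(\zeta^2 + c_1^{S_4}(V)\zeta + c_2^{S_4}(V)),
\]
where $\zeta = c_1^{S_4}(\mathcal{O}(1))$. One should fix the sign convention for $\mathcal{O}(1)$ versus $\mathcal{O}(-1)$ so that the relation takes this form. Here $A^*_{S_4}$ is taken from Lemma \ref{S4-equivar-Chow-ring-of-a-point}.

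\textbf{Step 2: excision.} The excision sequence
\[
A^*_{S_4}(\FWall) \to A^*_{S_4}(\mathbb{P}^1) \to A^*_{S_4}(\mathbb{P}^1 - \FWall) \to 0
\]
shows that the target is the quotient of $A^*_{S_4}(\mathbb{P}^1)$ by the pushforward of $A^*_{S_4}(\FWall)$. The set $\FWall = \{0,\infty,\pm1,\pm3\}$ has six points. I would check that these form a single $S_4$-orbit: they are the six points of $\mathbb{P}^1$ with stabilizer $D_4$-type (cyclic of order 4) under the octahedral action, in the coordinates coming from $D_4$. Then $\FWall \cong S_4/H$ with $H \cong \ZZ/4$ the stabilizer of $(0:1)$, and $A^*_{S_4}(\FWall) \cong A^*_H$. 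The image of the pushforward is the ideal generated by $i_*(1)$ together with $i_*$ of the higher classes of $A^*_{H} = \ZZ[u]/(4u)$, where $i:\FWall\hookrightarrow \mathbb{P}^1$ is the inclusion.

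\textbf{Step 3: computing the image.} First, $i_*(1)$ is the equivariant fundamental class of a degree $6$ divisor. It equals $c_1^{S_4}$ of $\mathcal{O}(6)$ twisted by the character through which $S_4$ acts on the equation of $\FWall$. That equation is the degree 6 octahedral invariant $t_0t_1(t_0^4-t_1^4)$ up to coordinate change, which is semi-invariant under the sign character. So
\[
i_*(1) = 6\zeta + \chi,
\]
where $\chi \in A^1_{S_4}$ is the sign class, presumably $\alpha$ or some multiple of it.

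The higher pushforwards $i_*(u^j)$ I would compute by the projection formula. The class $u$ is the restriction of $\zeta$ (or of some class from $A^*_{S_4}$) to the orbit, so $i_*(u^j) = \zeta^j\, i_*(1)$ modulo classes already in the ideal, and nothing new arises. It remains to reconcile $6\zeta + \chi$ with the stated relations $3\zeta$ and $3c_1^{S_4}(V)$. Here I would use that $c_1^{S_4}(V)$ is the sign class, so $2c_1^{S_4}(V)=0$, and combine this with the orbit structure. Perhaps the correct decomposition is instead two orbits, or the pushforward from the orbit of the $\ZZ/4$-fixed points contributes $3(2\zeta + c_1(V))$. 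In that case the ideal is generated by $3(2\zeta + c_1)$ together with classes like $2\cdot i_*$. I would then rewrite this ideal to obtain $(3\zeta, 3c_1^{S_4}(V))$.

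\textbf{Main obstacle.} I expect this last reconciliation to be the main difficulty: determining the exact image of the pushforward from $\FWall$ as an ideal, including torsion contributions from $A^*_H$. I would attack it first by restricting to the subgroups $D_4$ and $\ZZ/3$ of $S_4$, where the Chow rings are computable, and detecting the relations there.
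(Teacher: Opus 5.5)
\textbf{Where the proposal breaks.} Your reduction to $A^*_{S_4}(\mathbb{P}^1-\FWall)$ and your Step~1 agree with the paper. Steps~2--3, however, rest on a false orbit structure.

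The $\mathbb{P}^1$ here is $\mathbb{P}(V)$, not $\mathbb{P}W$. Since $V$ has kernel $D_4$, $S_4$ acts through $S_3$ and every stabilizer contains the Klein four-group $D_4$. In particular no point has stabilizer $\ZZ/4$, and the octahedral sextic $t_0t_1(t_0^4-t_1^4)$ lives on the wrong line. The orbits are easy to read off: $\sigma_3\cdot\rho=(\rho-3)/(\rho+1)$ cycles $1\mapsto-1\mapsto\infty\mapsto 1$ and $0\mapsto-3\mapsto 3\mapsto 0$, and $\rho\mapsto-\rho$ preserves both triples. Hence $\FWall=F_1\sqcup F_2$ with $F_1=\{\pm1,\infty\}$ and $F_2=\{0,\pm3\}$, each isomorphic to $S_4/D_8$.

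This is not cosmetic. Your class $6\zeta+\chi$ is in fact the class of the whole divisor, i.e.\ $[F_1]+[F_2]$. But the excision image contains the two orbit classes separately, and the ideal generated by their sum alone is strictly smaller. Concretely, in degree one $A^1_{S_4}\cong\ZZ/2$ is spanned by $c_1^{S_4}(V)$ (you are right that this is the sign class). Dividing $\ZZ\zeta\oplus\ZZ/2\cdot c_1^{S_4}(V)$ by $6\zeta+c_1^{S_4}(V)$ alone gives $\ZZ/12$, whereas the stated presentation gives $\ZZ/3$. So the ``reconciliation'' you postpone is exactly the content of the proof, and it is not supplied.

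\textbf{The missing computation} (the paper's route) runs as follows.
\begin{enumerate}
\item Take $q_1=(1:0)$ and $q_2=(0:1)$, each with stabilizer $D_8$. Using $V\downarrow^{S_4}_{D_8}\cong k\oplus k_{D_4}$, one gets $[q_1]^{D_8}=\xi$ and $[q_2]^{D_8}=\xi+c_{1,D_4}$ in $A^*_{D_8}(\mathbb{P}^1)$. The paper obtains these via the self-intersection formula; you can also see them as zero loci of the semi-invariant coordinate forms.
\item For the degree-$3$ map $\tilde{\varphi}\colon S_4\times_{D_8}\mathbb{P}^1\to\mathbb{P}^1$ one has $\tilde{\varphi}^*\zeta=\xi$ and $\tilde{\varphi}^*c_1^{S_4}(V)=c_{1,D_4}$.
\item The projection formula then gives $f_{1*}(1)=3\zeta$ and $f_{2*}(1)=3\zeta+3c_1^{S_4}(V)$.
\end{enumerate}

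\textbf{Higher classes also need repair.} It is not true that every class in the stabilizer's Chow ring is restricted from $S_4$. The paper asserts surjectivity of $\mathrm{res}^{S_4}_{D_8}$, but already $A^1_{S_4}\cong\ZZ/2$ while $A^1_{D_8}\cong(\ZZ/2)^2$.

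What does work is that the point classes themselves are pulled back from $S_4$. Hence, for all $u\in A^*_{D_8}$,
\[
f_{1*}(u)=\mathrm{tr}^{S_4}_{D_8}(u)\,\zeta,\qquad f_{2*}(u)=\mathrm{tr}^{S_4}_{D_8}(u)\,\bigl(\zeta+c_1^{S_4}(V)\bigr).
\]
Since $A^{>0}_{D_8}$ is $2$-primary torsion, on which $3$ is invertible, and $\mathrm{tr}(1)=3$, we get $\mathrm{tr}^{S_4}_{D_8}(A^*_{D_8})\subseteq 3A^*_{S_4}$. So the excision image is exactly the ideal $(3\zeta,\,3\zeta+3c_1^{S_4}(V))=(3\zeta,\,3c_1^{S_4}(V))$.
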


We remark that we may eliminate the generators $\nu, \eta$ and $c_1^{S_4}(V)$ from the presentation of $A^*_{\PGL_2}(\Gr(2,4)^s)$ in Theorem \ref{main-thm-Chow-ring-stable-locus}: a simplified presentation is given in Proposition \ref{equivar-Chow-ring-of-coll-of-stable-orbits}.

\section{Describing the S\textsubscript{4}-symmetry on $\mathrm{Gr}$(2,4)\textsuperscript{s}}
\label{description-S4-symmetry-stable-locus}

\subsection{$\mathrm{Stab}_{\PGL_2}(p)$ for $p\in \Gr(2,4)^s$}
\label{isotropy-subgroups-of-each-orbit}

In this section, we determine the stabilizers $\mathrm{Stab}_{\PGL_2}(p)$ for points $p$ inside the locus of stable points $\Gr(2,4)^s$. This will be the first step for our proof of Theorem \ref{main-thm-S4-symmetry-stable-locus}. We shall need the following theorem due to Klein \cite{Kl56} classifying the finite subgroups of $\PGL_2$.

\begin{mythm}[\cite{Kl56}]
\label{struct-finite-subgps-PGL2}

Let $k$ be an algebraically closed field, and let $\omega_m \in k^*$ be a primitive $m$-th root of unity for $m\geq 1$. Then, a finite subgroup of $PGL_2(k)$ is isomorphic to one of the following:
\begin{enumerate}
\item A cyclic group $C_m= \left\langle \bigl(\begin{smallmatrix} 1 & 0\\0 & \omega_m \end{smallmatrix} \bigr) \right\rangle$ of order $m$.
\item A dihedral group $D_{2m}= \left\langle \bigl(\begin{smallmatrix} 1 & 0\\0 & \omega_m \end{smallmatrix} \bigr), \bigl(\begin{smallmatrix} 0 & -1\\1 & 0 \end{smallmatrix} \bigr) \right\rangle$ of order $2m$, $m\geq 2$.
\item The alternating group on 4 letters (or the tetrahedral group) $A_4$ of order 12:
\[
A_4 = C_3 \ltimes D_4 =
\left\langle \sigma_3 := 
\begin{pmatrix}
\omega_4 & -1 \\
\omega_4 & 1 \\
\end{pmatrix} \right\rangle \ltimes D_4
\]
\item The symmetric group on 4 letters (or the octahedral group) $S_4$ of order 24:
\begin{align*}
S_4 &= S_3 \ltimes D_4
= \left\langle 
\sigma_3= 
\begin{pmatrix}
\omega_4 & -1 \\
\omega_4 & 1 \\
\end{pmatrix},
\sigma_2=
\begin{pmatrix}
-1 & 1 \\
1 & 1 \\
\end{pmatrix}
\right\rangle \ltimes D_4, 
\end{align*}
\item The alternating group on 5 letters (or the icosahedral group) $A_5$ of order 60. (The presentation is omitted given that it will not be relevant for us.)
\end{enumerate}
Moreover, all of the above have exactly one conjugacy class in $PGL_2(k)$.
\end{mythm}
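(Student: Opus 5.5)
The plan is the classical one: let a finite subgroup $G \leq \PGL_2(k)$ act on $\mathbb{P}^1 = \mathbb{P}W$ and count fixed points. I will assume throughout that $|G|$ is prime to $\mathrm{char}(k)$. This hypothesis is genuinely needed: in characteristic $p>0$, groups such as $\PGL_2(\mathbb{F}_q)$ or unipotent subgroups $\{\bigl(\begin{smallmatrix} 1 & a\\ 0 & 1\end{smallmatrix}\bigr)\}$ are not on the list. In our application the stabilizers of stable points have order dividing $24$, and $\mathrm{char}(k)\neq 2,3$, so the hypothesis holds there.

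\textbf{Step 1: stabilizers of points are cyclic.} Any non-identity $g\in G$ lifts to $\GL_2(k)$ and has finite order prime to the characteristic, so it is semisimple. It is non-scalar, so it has exactly two fixed points on $\mathbb{P}^1$. For $x\in\mathbb{P}^1$, the stabilizer $G_x$ is contained in the Borel subgroup fixing $x$. Since its order is prime to $p$, $G_x$ meets the unipotent radical trivially. Hence $G_x$ embeds into the torus quotient $k^*$ and is cyclic, say of order $e_x$. After conjugating so that $x=\infty$ and the second fixed point of a generator is $0$, $G_x$ becomes exactly the group $C_m$ of item (1).

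\textbf{Step 2: Riemann--Hurwitz count.} Count pairs $(g,x)$ with $1\neq g\in G$ and $gx=x$. Each nontrivial $g$ has two fixed points, and grouping the points into orbits $O_1,\dots,O_r$ with stabilizer orders $e_i>1$ gives
\[
2(|G|-1)=\sum_{i=1}^r \frac{|G|}{e_i}(e_i-1),
\qquad\text{i.e.}\qquad
2-\frac{2}{|G|}=\sum_{i=1}^r\Bigl(1-\frac{1}{e_i}\Bigr).
\]
The left side lies in $[1,2)$ for $|G|\geq 2$, which forces $r\in\{2,3\}$. The case $r=2$ gives $e_1=e_2=|G|$. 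The case $r=3$ gives the signatures $(2,2,m)$, $(2,3,3)$, $(2,3,4)$, $(2,3,5)$, with $|G| = 2m, 12, 24, 60$ respectively.

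\textbf{Step 3: identifying each case up to conjugacy.}
\begin{itemize}
\item Signature $(|G|,|G|)$: $G$ fixes a point, so $G$ is cyclic by Step 1.
\item Signature $(2,2,m)$: $G$ has an orbit of size $2$, say $\{0,\infty\}$ after conjugation. Then $G$ lies in the normalizer $N(T)$ of the diagonal torus. Its intersection with $T$ is cyclic of index $2$, namely $C_m$. Any element of $N(T)-T$ is an involution, and conjugating by a diagonal matrix normalizes it to $\bigl(\begin{smallmatrix}0&-1\\1&0\end{smallmatrix}\bigr)$, giving $D_{2m}$.
\item Signature $(2,3,3)$, $(2,3,4)$, $(2,3,5)$: identify the abstract group via its action on the orbits. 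For $(2,3,3)$, the $4$ points of one $e=3$ orbit give a faithful action of $G$, of order $12$, on four points with no element of order $4$, so $G\cong A_4$. For $(2,3,4)$, the action on the $6$ points with $e=4$ gives $S_4$ (the octahedron). For $(2,3,5)$, the count yields $A_5$.
\end{itemize}

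\textbf{Step 4: conjugacy.} For $A_4$ and $S_4$, the normal Klein four-group $V_4$ consists of three commuting involutions. A direct computation shows that any such triple in $\PGL_2(k)$ is conjugate to $D_4$: diagonalize one involution as $\mathrm{diag}(1,-1)$, then the others lie in $N(T)-T$ and can be normalized by the torus. Then $G\leq N(D_4)$, and one checks that $N(D_4)$ is the stated $S_4$, containing a unique $A_4$. For $A_5$, take a Sylow $2$-subgroup $V_4$, conjugate it to $D_4$, and argue that $G$ is determined by $N_G(V_4)\cong A_4\leq N(D_4)$ together with one extra element. An alternative is to lift to the binary group in $\mathrm{SL}_2(k)$ and use that its faithful $2$-dimensional representations are permuted transitively by outer automorphisms.

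I expect this conjugacy step, especially for $A_5$, to be the main obstacle, since it is the only place needing explicit matrix or representation-theoretic computation rather than counting. Since $A_5$ is irrelevant for the paper, I would either treat it briefly via the binary icosahedral argument or cite it.
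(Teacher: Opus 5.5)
The paper gives no proof of this theorem; it is quoted from Klein, so there is no route to compare against. Your proposal is the classical orbit-counting proof. Steps~1--2, the cyclic, dihedral and $A_4$ cases, and the conjugacy argument via a normal Klein four-group in Step~4 are sound. (The check in Step~4 is $C_{\PGL_2}(D_4)=D_4$, so $|N(D_4)|\le 4\cdot|\mathrm{Aut}(D_4)|=24$.)

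Your coprimality hypothesis is a genuine correction. As printed, the statement is false in characteristic $p$: for example $\PGL_2(\mathbb{F}_p)\le\PGL_2(k)$, or $(\mathbb{Z}/p)^2$ inside the unipotent radical, are not on the list. Its displayed matrices also already require $p\nmid m$ and $p\neq 2$.

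However, your justification that the hypothesis holds in the paper's application is circular. The claim that stabilizers of stable points have order dividing $24$ is deduced in Section~\ref{isotropy-subgroups-of-each-orbit} from this very theorem. Lemma~\ref{order-stabilizers-stable-orbits} also already presumes that every finite-order element is conjugate to some $X_n$. A non-circular check goes as follows. An element of order $p=\mathrm{char}\,k\ge 5$ in $\PGL_2(k)$ is unipotent, and its unipotent lift acts on $\Sym^3 W$ by a single Jordan block (since $3!\neq 0$). So the only pencil it fixes is $L^2\cdot W$, where $L$ is its fixed linear form, and this pencil lies in the unstable orbit $Z_1$. By Cauchy, stabilizers of stable pencils therefore have order prime to $p$.

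The one step that fails as written is the $(2,3,4)$ case. A faithful action of a group of order $24$ on $6$ points does not single out $S_4$; for instance $A_4\times C_2\le S_6$ also has order $24$. You need extra structure, which can be obtained as follows.
\begin{enumerate}
\item If $G_x=\langle g\rangle\cong C_4$, the second fixed point $x'$ of $g$ has the same stabilizer, so the six points form three axes.
\item $\langle g\rangle$ acts freely, hence as a $4$-cycle, on the other four points. So $g$ swaps the other two axes, and $h_x:=g^2$ swaps the two points of each of them.
\item An involution is determined by its fixed pair, and $h_x$ swaps the fixed pair of $h_y$. Hence $h_xh_yh_x^{-1}=h_y$.
\item $h_xh_y$ is an involution fixing the third axis pointwise, so it equals $h_z$. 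Thus $\{1,h_x,h_y,h_z\}$ is a normal Klein four-group.
\end{enumerate}
Feeding this into your Step~4 gives $G\le N(D_4)$ with $|G|=24=|N(D_4)|$, hence $G=N(D_4)$. This delivers the isomorphism type and the conjugacy at once, with no prior abstract identification.

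For $(2,3,5)$, neither ``the count yields $A_5$'' nor the ``$N_G(V_4)$ plus one extra element'' sketch is yet an argument. This is harmless for the paper. The cases $(2,3,4)$ and $(2,3,5)$ enter Lemma~\ref{order-stabilizers-stable-orbits} and Propositions~\ref{stabilizer-stable-orbits-not-Z0-expression}--\ref{stabilizer-stable-orbit-Z0} only to be excluded. The signatures themselves already exhibit point stabilizers $C_4$ and $C_5$, i.e.\ elements of order $4$ and $5$.
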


Using the fact that the eigenvectors of $C_m$ are the monomial basis elements of $\Sym^3 W = k[t_0,t_1]_3$, we observe $\mathrm{Stab}_{\PGL_2}(p)$ may only contain elements of order at most 3 whenever $p\in\Gr(2,4)$ is stable. We record this observation in the following Lemma.

\begin{mylemma}
\label{order-stabilizers-stable-orbits}

Let $A\in \PGL_2(k)$ and $p\in \Gr(2,4)^s$ be a pencil in a stable orbit, and suppose $A\cdot p=p$. Then, the order of $A$ is at most 3. It follows that $\mathrm{Stab}_{\PGL_2}(p)$ is a finite group with each of its elements of order at most 3.
\end{mylemma}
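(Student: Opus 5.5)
We first show that $\mathrm{Stab}_{\PGL_2}(p)$ is finite, and then bound the order of each of its elements using the eigenvector structure of a diagonalized element.

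\emph{Finiteness.} Since $p$ is stable, Theorem \ref{GIT-quotient-pencils-of-binary-cubics} gives that $\PhiNew$ restricted to $\Gr(2,4)^s$ is a geometric quotient. Its fibres are single orbits, and they all have dimension $\dim \Gr(2,4) - 1 = 3 = \dim \PGL_2$. Hence the stabilizer is $0$-dimensional, so it is a finite group scheme. As $\mathrm{char}\, k \neq 2,3$, we work with its $k$-points. (Alternatively, finiteness of stabilizers of stable points is part of the GIT definition of stability used in \cite{Ne78}.)

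\emph{Diagonalizing $A$.} Let $A \in \mathrm{Stab}_{\PGL_2}(p)$ be nontrivial. We show that $A$ has finite order at most $3$.

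1. Since the stabilizer is finite, $A$ has finite order $m$. If $\mathrm{char}\, k \mid m$, then some power of $A$ is unipotent of order $\mathrm{char}\, k$. That power is conjugate to $\bigl(\begin{smallmatrix} 1 & 1\\0 & 1 \end{smallmatrix}\bigr)$ and has infinite order in characteristic $0$, so this case needs separate care; it suffices to show that no unipotent element fixes a stable $p$.

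2. Otherwise $A$ is semisimple. By Theorem \ref{struct-finite-subgps-PGL2}, after conjugating by some $B \in \PGL_2$ (replacing $p$ by $B\cdot p$, which is still stable), we may assume $A = \bigl(\begin{smallmatrix} 1 & 0\\0 & \omega \end{smallmatrix} \bigr)$ with $\omega$ a primitive $m$-th root of unity.

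\emph{Eigenspace argument.} The element $A$ acts on $\Sym^3 W$ diagonally in the monomial basis. The eigenvalue of $t_0^{3-i}t_1^i$ is $\omega^i$, for $i=0,\dots,3$. If $A\cdot p = p$, then the $2$-plane $p$ is $A$-stable, so it is a sum of its intersections with the eigenspaces of $A$.

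1. If $m \geq 4$, the eigenvalues $1,\omega,\omega^2,\omega^3$ are pairwise distinct. Then every eigenspace is a single monomial line, so $p = \langle t_0^{3-i}t_1^i, t_0^{3-j}t_1^j\rangle$ for some $i<j$.

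2. There are six such monomial pencils. They are exactly the torus-fixed points, and each has a positive-dimensional stabilizer containing the diagonal torus. Hence none of them is stable.

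3. As a concrete check, one can compute $I' = 3p_{03}-p_{12}$ and $J$ from the Plücker coordinates of each monomial pencil. This verifies that either $(I',J)=(0,0)$ or $I'^3 = \pm 6^3 J$, so by Theorem \ref{GIT-quotient-pencils-of-binary-cubics} none of them lies in $\Gr(2,4)^s$.

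This contradiction gives $m \leq 3$.

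\emph{The unipotent case.} For $A = \bigl(\begin{smallmatrix} 1 & 1\\0 & 1 \end{smallmatrix}\bigr)$, the only invariant $2$-plane in $\Sym^3 W$ is the span of the first two vectors of the Jordan flag, namely $\langle t_0^3, t_0^2t_1\rangle$. This uses that $\mathrm{char}\, k \neq 2,3$, which makes $A$ act on $\Sym^3 W$ as a single Jordan block. That plane is again a monomial pencil, hence not stable.

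\emph{Conclusion and main difficulty.} Every element of $\mathrm{Stab}_{\PGL_2}(p)$ therefore has order at most $3$, and the stabilizer is finite. The main obstacle is the rigorous finiteness and non-stability input, in particular identifying precisely which pencils are unstable. I would settle this by the explicit invariant computation for the six monomial pencils described above.
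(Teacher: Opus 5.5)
Your proof is correct and follows the paper's route. You conjugate a stabilizing element to $\bigl(\begin{smallmatrix} 1 & 0\\ 0 & \omega \end{smallmatrix}\bigr)$. For order $m\geq 4$ the monomial eigenlines in $\Sym^3 W$ have distinct eigenvalues, which forces $p$ to be a monomial pencil, and the six monomial pencils are then ruled out as non-stable. Your separate unipotent case, where the unipotent element acts on $\Sym^3 W$ as a single Jordan block because $\mathrm{char}\, k \neq 2,3$, is a genuine addition. The paper's appeal to Klein's classification tacitly ignores the non-diagonalizable elements of order $\mathrm{char}\, k$ that exist in positive characteristic, and your argument covers them.
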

\begin{proof}
Firstly, the last statement is implied by the statement of $\mathrm{ord}(A) \leq 3$, since any pencil $p$ in a stable $\PGL_2$-orbit has $\mathrm{Stab}_{\PGL_2}(p)$ being a finite group by definition from basic GIT. 

Notice that stabilizers of pencils in the same $\PGL_2$-orbit are all conjugates to each other as subgroups of $\PGL_2$. By Theorem \ref{struct-finite-subgps-PGL2}, it suffices to determine the pencils that are fixed by elements of $\PGL_2(k)$ of the form
\[
X_n=
\begin{pmatrix}
1 & 0\\
0 & \omega_n\\
\end{pmatrix}
\]
where $\omega_n$ is a primitive $n$-th root of unity, and find the stable orbits containing such pencils. It is evident that the eigenvectors of $X_n$ are given by the monomial basis elements of $k[t_0,t_1]_3$, with eigenvalues
\[
\mu_i = \omega_n^i
\]
for each of $t_0^{3-i}t_1^i$, $i=0,\dots,3$. Thus, the eigenvectors of $X_n$ all have distinct eigenvalues whenever $n\geq 4$, so any $p\in \Gr(2,4)$ fixed by $X_n$ for any $n\geq 4$ must be spanned by some linearly independent monomials $f,g \in k[t_0,t_1]_3$. 

However, enumerating all the six possible $p=\langle f,g \rangle$ with $f,g$ distinct monomials, we may calculate that $(I')^3|_p=\pm 6^3 J|_p$, so $p$ is not stable by Theorem \ref{GIT-quotient-pencils-of-binary-cubics}. Therefore, $X_n\in \PGL_2(k)$ fixes a pencil $p$ in a stable orbit only if $n=1,2,3$, and the result follows.
\end{proof}

Next, using an elementary argument with linear algebra, we observe that there is only one stable orbit with isotropy subgroup containing elements of order 3. 

\begin{mylemma}
\label{stabilizer-stable-orbits-with-order-3-elems}

Let
\begin{align}
\label{stable-orbit-Z0}
\mathcal{O}_{A_4} := \{ p \in \Gr(2,4) \mid I'|_p = 0, J|_p \neq 0 \}
\end{align}
be the $\PGL_2$-orbit in $\Gr(2,4)$ consisting of points with invariants $I'=0$ and $J\neq 0$ (note that this is a unique orbit by Theorem \ref{GIT-quotient-pencils-of-binary-cubics}). Then, for any $p\in\Gr(2,4)^s$, $\mathrm{Stab}_{\PGL_2}(p)$ contains an element of order 3 if and only if $p\in \mathcal{O}_{A_4}$.
\end{mylemma}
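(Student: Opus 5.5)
We reduce to a diagonal element of order 3, as in the proof of Lemma \ref{order-stabilizers-stable-orbits}. By Theorem \ref{struct-finite-subgps-PGL2}, every element of order 3 in $\PGL_2(k)$ is conjugate to
\[
X_3=\begin{pmatrix}1&0\\0&\omega_3\end{pmatrix}.
\]
Stabilizers of points in one $\PGL_2$-orbit are conjugate, and $I'$, $J$ are $\PGL_2$-invariant. So $\mathrm{Stab}_{\PGL_2}(p)$ contains an element of order 3 if and only if the orbit of $p$ contains a pencil fixed by $X_3$. It therefore suffices to find all pencils fixed by $X_3$, compute $I'$ and $J$ on them, and compare with the defining conditions of $\mathcal{O}_{A_4}$.

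First we describe the fixed pencils. On $k[t_0,t_1]_3$, the element $X_3$ acts diagonally in the monomial basis with eigenvalues $1,\omega_3,\omega_3^2,1$ on $t_0^3, t_0^2t_1, t_0t_1^2, t_1^3$. The eigenspaces are therefore $E_1=\langle t_0^3,t_1^3\rangle$, $E_{\omega_3}=\langle t_0^2t_1\rangle$ and $E_{\omega_3^2}=\langle t_0t_1^2\rangle$. An invariant $2$-plane is a direct sum of its intersections with these eigenspaces. Hence it is one of the following:
\begin{enumerate}
\item $E_1$ itself, i.e.\ $p=\langle t_0^3,t_1^3\rangle$;
\item $\langle t_0^2t_1,t_0t_1^2\rangle$;
\item $\langle h, m\rangle$ with $h=a t_0^3+b t_1^3\in E_1$ nonzero and $m\in\{t_0^2t_1,\,t_0t_1^2\}$.
\end{enumerate}

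Next we compute the invariants from the Plücker formulas of Section \ref{subsection-basics-GIT-for-pencils-of-binary-cubics}.
\begin{itemize}
\item For $p=\langle t_0^3,t_1^3\rangle$ we have $p_{03}=1$ and all other $p_{ij}=0$. The Wronskian is then proportional to $t_0^2t_1^2$, which gives $I'=3\neq 0$ and $(I')^3=\pm6^3J$. So this pencil is not stable.
\item The pencil $\langle t_0^2t_1,t_0t_1^2\rangle$ is a monomial pencil, already shown to be non-stable in Lemma \ref{order-stabilizers-stable-orbits}.
\item For $\langle at_0^3+bt_1^3,\ t_0^2t_1\rangle$ the only nonzero Plücker coordinates are $p_{01}=a$ and $p_{31}=b$. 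Thus $I'=3p_{03}-p_{12}=0$, and the Wronskian is $(a:0:0:-2b:0)$, i.e.\ proportional to $at_0^4-8bt_0t_1^3$ up to the binomial normalization.
\end{itemize}
For that last Wronskian we get $J=-c_0c_3^2$ with $c_0=a$ and $c_3$ proportional to $b$, so $J\neq0$ exactly when $ab\neq0$. If $ab=0$, both invariants vanish and the pencil is unstable. The case $m=t_0t_1^2$ is symmetric under $t_0\leftrightarrow t_1$. Consequently, every stable pencil fixed by $X_3$ satisfies $I'=0$ and $J\neq0$, which proves the forward direction.

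For the converse, we take $p\in\mathcal{O}_{A_4}$. By the computation above, $\langle t_0^3+t_1^3, t_0^2t_1\rangle$ also has $I'=0$ and $J\neq 0$. It is stable, since $(0:J)$ is not in $\FNew$. By Theorem \ref{GIT-quotient-pencils-of-binary-cubics}, $\PhiNew$ restricted to the stable locus is a geometric quotient, so the fibre over $(0:1)$ is a single orbit. Hence $p$ lies in the orbit of a pencil fixed by $X_3$, and its stabilizer contains a conjugate of $X_3$.

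The main obstacle is getting the Plücker and Wronskian normalizations right so that $J\neq0$ comes out correctly in the third case. One also has to justify that the invariant-subspace decomposition captures every fixed pencil. That holds because $X_3$ is semisimple, as $\mathrm{char}\,k\neq3$.
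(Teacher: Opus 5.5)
Your proof is correct and follows essentially the same route as the paper. You reduce to the diagonal element $X_3$, split an $X_3$-invariant pencil along the eigenspaces $\langle t_0^3,t_1^3\rangle$, $\langle t_0^2t_1\rangle$, $\langle t_0t_1^2\rangle$, and read off $I'=0$, $J\neq 0$ from the Pl\"{u}cker coordinates. Your case list, together with the explicit converse via the geometric quotient $\PhiNew$, is in fact more complete than the paper's sketch.

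There is one harmless slip: Newstead's Wronskian coordinates are the monomial coefficients themselves. So in the third case the quartic is $at_0^4-2bt_0t_1^3$ (i.e.\ $c_3=-b/2$), not $at_0^4-8bt_0t_1^3$. This does not change the conclusion $J\neq 0 \iff ab\neq 0$.
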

\begin{proof}
We will find generators for all pencils in stable orbits fixed by
\[
X_3 =
\begin{pmatrix}
1 & 0\\
0 & \omega_3\\
\end{pmatrix}
\]
where $\omega_3$ is a primitive third root of unity. Let $p$ be a pencil in a stable orbit such that $X_3\cdot p = p$. Viewing $p$ as a 2-dimensional $X_3$-invariant subspace of $\Sym^3 W$, we may write $p=\langle f,g \rangle$ for linearly independent eigenvectors $f,g$ of $X_3$. By the proof of Lemma \ref{order-stabilizers-stable-orbits}, $p$ cannot be spanned by monomials only, so we must have that
\[
f = a_0t_0^3+a_3t_1^3 \in p, g = b_2t_0t_1^2+b_3t_1^3
\]
where $a_0,a_3 \neq 0$ and either $b_2\neq 0$ or $b_3\neq 0$ (but not both). Notice that all other possible combinations of eigenvectors $f,g$ could be row reduced to linearly independent monomials.

Thus, $p$ has invariants $I'=3p_{03}-p_{12}=0$ and $J\neq 0$ (where $p_{ij}=a_ib_j-a_jb_i$), so the result follows.
\end{proof}

Next, we may deduce $\mathrm{Stab}_{\PGL_2}(p)$ for $p\in \Gr(2,4)^s-\mathcal{O}_{A_4}$.

\begin{myprop}
\label{stabilizer-stable-orbits-not-Z0-expression}

Let $p\in \Gr(2,4)^s$ be $\PGL_2$-stable.
\begin{enumerate}
\item If $p\in \Gr(2,4)^s-\mathcal{O}_{A_4}$ (with $\mathcal{O}_{A_4}$ defined as in Equation \ref{stable-orbit-Z0}), then $\mathrm{Stab}_{\PGL_2}(p)\cong D_4 = \ZZ/2 \times \ZZ/2$.
\item If $p\in \mathcal{O}_{A_4}$, then $D_4 \leq \mathrm{Stab}_{\PGL_2}(p)$.
\end{enumerate}
\end{myprop}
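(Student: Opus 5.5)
We combine Theorem \ref{struct-finite-subgps-PGL2} with Lemmas \ref{order-stabilizers-stable-orbits} and \ref{stabilizer-stable-orbits-with-order-3-elems} to pin down the abstract isomorphism type of $\mathrm{Stab}_{\PGL_2}(p)$. We then exhibit an explicit $D_4$ inside the stabilizer, using Wall's representatives from Proposition \ref{Wall-reps-stable-orbits}.

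\emph{Step 1 (possible stabilizers).} For $p$ stable, $G:=\mathrm{Stab}_{\PGL_2}(p)$ is finite, so by Theorem \ref{struct-finite-subgps-PGL2} it is one of $C_m$, $D_{2m}$, $A_4$, $S_4$, $A_5$. Lemma \ref{order-stabilizers-stable-orbits} says every element has order at most $3$. This excludes $S_4$ (which has elements of order $4$), $A_5$ (order $5$), $C_m$ for $m\ge 4$, and $D_{2m}$ for $m\geq 4$. If $p\notin\mathcal{O}_{A_4}$, Lemma \ref{stabilizer-stable-orbits-with-order-3-elems} forbids elements of order $3$, which removes $C_3$, $D_6\cong S_3$ and $A_4$. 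We are left with $G\in\{1, C_2, D_4\}$.

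\emph{Step 2 (an explicit $D_4$).} Every stable $p$ lies in the orbit of some $p_\rho$ with $\rho\ne 0,\infty,\pm1,\pm3$, and stabilizers along an orbit are conjugate. It therefore suffices to check that the fixed $D_4=\langle \mathrm{diag}(1,-1),\bigl(\begin{smallmatrix}0&-1\\1&0\end{smallmatrix}\bigr)\rangle$ stabilizes every $p_\rho=\langle t_0^3+\rho t_0t_1^2,\ \rho t_0^2t_1+t_1^3\rangle$. We must choose a convention for how $\PGL_2$ acts on $\Sym^3W$ (substitution in $t_0,t_1$); the check is insensitive to the choice because $D_4$ is closed under inverse-transpose up to scalars.
\begin{itemize}
\item The element $\mathrm{diag}(1,-1)$ sends $t_1\mapsto -t_1$. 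It fixes $f_\rho$ and sends $g_\rho\mapsto -g_\rho$.
\item The element $\bigl(\begin{smallmatrix}0&-1\\1&0\end{smallmatrix}\bigr)$ sends $(t_0,t_1)\mapsto(t_1,-t_0)$ up to sign. Then $f_\rho\mapsto t_1^3+\rho t_1t_0^2=g_\rho$ and $g_\rho\mapsto -(\rho t_0t_1^2+t_0^3)=-f_\rho$, up to a global sign.
\end{itemize}
Hence $p_\rho$ is fixed, and $D_4\le \mathrm{Stab}(p_\rho)$. Conjugating back gives a copy of $D_4$ inside $\mathrm{Stab}_{\PGL_2}(p)$ for every stable $p$. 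This proves (2) directly.

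\emph{Step 3 (conclusion).} For (1), Step 2 gives $D_4\le G$, so $|G|\ge 4$. By Step 1 this forces $G\cong D_4\cong \ZZ/2\times\ZZ/2$.

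The main obstacle is the bookkeeping in Step 1. Klein's list must be applied within the exact group scheme $\PGL_2(k)$ over an algebraically closed field of characteristic $\ne 2,3$, where Lemma \ref{order-stabilizers-stable-orbits} only bounds element orders. A small additional point is that $D_{2m}$ for $m=2$ is $D_4$ and for $m=3$ has order-3 elements, so it is excluded in case (1). In positive characteristic Klein's list can acquire extra groups (e.g.\ $\PGL_2(\mathbb{F}_q)$). This case would be handled by the same order argument: any such group contains elements of order $p$ or larger than $3$, so the characteristic assumption together with Lemma \ref{order-stabilizers-stable-orbits} still reduces to the list in Step 1. I take Theorem \ref{struct-finite-subgps-PGL2} as stated and do not reprove this point.
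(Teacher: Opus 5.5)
Your proposal is correct and takes essentially the same route as the paper. You exhibit the explicit $D_4=\langle \mathrm{diag}(1,-1),\bigl(\begin{smallmatrix}0&-1\\1&0\end{smallmatrix}\bigr)\rangle$ fixing every Wall representative $p_\rho$, which gives (2), and then use Lemmas \ref{order-stabilizers-stable-orbits} and \ref{stabilizer-stable-orbits-with-order-3-elems} with Klein's list to cut the stabilizer down to $\{1,\mathbb{Z}/2,D_4\}$ for (1); the only differences are that you run the elimination before the explicit check and that you write out the action on $f_\rho, g_\rho$, which the paper leaves as ``straightforward'' and which you compute correctly.
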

\begin{proof}
Firstly, let $p\in \Gr(2,4)^s$ be stable. By Proposition \ref{Wall-reps-stable-orbits}, every $\PGL_2$-stable orbit contains at least one pencil of the form $p_{\rho} = \langle f_{\rho}, g_{\rho} \rangle = \langle t_0^3+\rho t_0t_1^2, \rho t_0^2t_1+t_1^3 \rangle$ for some $\rho\in k$. It is straightforward to verify that
\[
\begin{pmatrix}
1 & 0\\
0 & -1\\
\end{pmatrix},
\begin{pmatrix}
0 & -1\\
1 & 0\\
\end{pmatrix}
\]
fix every $p_{\rho}$, and the above elements generate the distinguished $D_4\leq \PGL_2$ as presented in Theorem \ref{struct-finite-subgps-PGL2}. Thus, $D_4\leq \mathrm{Stab}_{\PGL_2}(p)$ for every $p$ stable, and the second statement follows.

Now, suppose $p\in\Gr(2,4)^s-\mathcal{O}_{A_4}$. Lemma \ref{order-stabilizers-stable-orbits} and Lemma \ref{stabilizer-stable-orbits-with-order-3-elems} imply that $\mathrm{Stab}_{\PGL_2}(p)$ contains only elements of order 1 or 2. By Theorem \ref{struct-finite-subgps-PGL2}, the only finite subgroups of $\PGL_2$ with elements of order at most 2 are $\{I \}, \ZZ/2$ and $D_4$. Therefore, $\mathrm{Stab}_{\PGL_2}(p) \cong D_4$ whenever $p\in \Gr(2,4)^s-\mathcal{O}_{A_4}$.
\end{proof}

Finally, we may deduce that $\mathrm{Stab}_{\PGL_2}(p) \cong A_4$ for $p\in \mathcal{O}_{A_4}$ using Theorem \ref{struct-finite-subgps-PGL2}.

\begin{myprop}
\label{stabilizer-stable-orbit-Z0}

Let $p\in \mathcal{O}_{A_4}$. Then, $\mathrm{Stab}_{\PGL_2}(p) \cong A_4$, the alternating group on four letters.
\end{myprop}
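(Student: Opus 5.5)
We argue by elimination among the finite subgroups of $\PGL_2$ listed in Theorem \ref{struct-finite-subgps-PGL2}. Let $p\in\mathcal{O}_{A_4}$ and set $G:=\mathrm{Stab}_{\PGL_2}(p)$. Since $p$ is stable, $G$ is finite. By Lemma \ref{order-stabilizers-stable-orbits}, every element of $G$ has order at most $3$. By Lemma \ref{stabilizer-stable-orbits-with-order-3-elems}, $G$ contains an element of order $3$. By Proposition \ref{stabilizer-stable-orbits-not-Z0-expression}(2), $G$ contains a copy of $D_4\cong\ZZ/2\times\ZZ/2$. Hence $|G|$ is divisible by $12$, and $G$ has elements of orders exactly $1$, $2$ and $3$.

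Next we run through Klein's list. A cyclic group $C_m$ with elements of order $2$ and $3$ contains an element of order $6$, which is excluded. A dihedral group $D_{2m}$ with $4\mid 2m$ and $3\mid m$ has $6\mid m$, so its rotation subgroup $C_m$ contains an element of order $6$, again excluded. The group $S_4$ contains $4$-cycles, of order $4$, and $A_5$ contains $5$-cycles, of order $5$; both are excluded. Only $A_4$ remains, whose elements have orders $1,2,3$, so $G\cong A_4$.

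The one point needing care is the form of Klein's theorem being used. Theorem \ref{struct-finite-subgps-PGL2} is stated for an algebraically closed field without explicit restriction on characteristic. In characteristic $p>3$, however, $\PGL_2(k)$ also has finite subgroups of order divisible by $p$, such as unipotent subgroups and $\mathrm{PSL}_2(\mathbb{F}_q)$. The cleanest fix uses Lemma \ref{order-stabilizers-stable-orbits}: every element of $G$ has order $\le 3$, and $\mathrm{char}\,k\neq 2,3$, so $|G|$ is coprime to the characteristic. The list in Theorem \ref{struct-finite-subgps-PGL2} then applies to $G$ as a tame subgroup.

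As an explicit cross-check, and as an alternative route if one wants to avoid the classification, we can verify directly that a representative is fixed by an $A_4$. Take $p=\langle t_0^3+\rho t_0t_1^2,\ \rho t_0^2t_1+t_1^3\rangle$ with $\rho^2=-3$, so that $I'|_p=0$ by Equation \ref{I-prime-for-Wall-rep}. One checks that $\sigma_3$ from Theorem \ref{struct-finite-subgps-PGL2} fixes $p$, so $A_4=\langle\sigma_3\rangle\ltimes D_4\le G$. Maximality then follows from the order restriction above.
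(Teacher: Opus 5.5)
Your argument is correct and is the paper's: combine $D_4\le G$, the existence of an element of order $3$, and the bound on element orders, then eliminate via Klein's list. You spell out the elimination explicitly, and you add a worthwhile observation that the paper leaves implicit: the order bound makes $|G|$ prime to $\mathrm{char}\,k$, so the tame form of the classification applies.

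One caveat on your last paragraph. The direct check that $\sigma_3$ fixes $p_{\pm i\sqrt{3}}$ is correct, since $\rho\mapsto\frac{\rho-3}{\rho+1}$ fixes exactly $\rho^2=-3$, and it gives $A_4\le G$ without the classification. But maximality is exactly where Klein's list is still needed, or a substitute such as bounding the Sylow subgroups of $G$ inside $\PGL_2$. The order bound alone does not rule out abstract groups like $(\mathbb{Z}/2)^4\rtimes\mathbb{Z}/3$, which contains $A_4$ and all of whose elements have order at most $3$.
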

\begin{proof}
Lemma \ref{stabilizer-stable-orbits-with-order-3-elems} and Lemma \ref{stabilizer-stable-orbits-not-Z0-expression} imply that $\mathrm{Stab}_{\PGL_2}(p)$ contains $D_4$ and an element of order 3 whenever $p\in \mathcal{O}_{A_4}$. Also, Lemma \ref{order-stabilizers-stable-orbits} implies that $\mathrm{Stab}_{\PGL_2}(p)$ cannot contain elements of order strictly greater than 3. Thus, by Theorem \ref{struct-finite-subgps-PGL2}, it follows that $\mathrm{Stab}_{\PGL_2}(p) \cong A_4$.
\end{proof}

\subsection{The $S_4$-symmetry on $\mathrm{Gr}(2,4)^s$}

For the rest of this paper, fix $D_4$ and $S_4 = N_{PGL_2}(D_4)$ to be the distinguished finite subgroups of $\PGL_2$ as given in Theorem \ref{struct-finite-subgps-PGL2}. We may specify the action of $S_4 := N(D_4)$ on the stable locus $\Gr(2,4)^s$.

\begin{mylemma}
\label{S4-symmetry-of-locus-of-stable-points}

Consider one of the ${\PGL}_2$-orbit representatives $p_{\rho}$ as given in Proposition \ref{Wall-reps-stable-orbits}, and suppose that $p_{\rho}\in \Gr(2,4)$ is ${\PGL}_2$-stable. Then, we have that
\[
N_{{\PGL}_2}(\mathrm{Stab}_{{\PGL}_2}(p_{\rho})) = S_4,
\]
and the orbit of the action of $S_4$ on $p_{\rho}$ is given by:
\[
S_4 \cdot p_{\rho} = \left\{ p_{\rho}, p_{-\rho}, p_{\frac{\rho+3}{\rho-1}}, p_{-\frac{\rho+3}{\rho-1}}, p_{\frac{\rho-3}{\rho+1}}, p_{-\frac{\rho-3}{\rho+1}} \right\}.
\]
Moreover, two ${\PGL}_2$-stable pencils $p_{\rho}, p_{\rho'}$ with $\rho,\rho'\in k\cup\{ \infty \}$ are in the same ${\PGL}_2$-orbit if and only if there exists some $\sigma\in S_4$ such that $\sigma\cdot p_{\rho}=p_{\rho'}$.
\end{mylemma}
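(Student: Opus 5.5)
Three things need to be shown: the normalizer statement, the explicit orbit, and the orbit criterion. The plan is to take them in that order.

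\emph{Normalizer.} By Proposition \ref{stabilizer-stable-orbits-not-Z0-expression}, $D_4 \leq \mathrm{Stab}_{\PGL_2}(p_\rho)$. There are two cases. If $p_\rho \notin \mathcal{O}_{A_4}$, the stabilizer equals $D_4$, and its normalizer is $S_4$ by the definition $S_4 := N(D_4)$. If $p_\rho \in \mathcal{O}_{A_4}$, the stabilizer is a copy of $A_4$ containing our $D_4$ (Proposition \ref{stabilizer-stable-orbit-Z0}). Since $D_4$ is the unique Sylow $2$-subgroup of $A_4$, it is characteristic in $A_4$, so $N(A_4) \leq N(D_4) = S_4$. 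Conversely, $A_4$ has index $2$ in $S_4$, hence is normal there, so $S_4 \leq N(A_4)$. Here I use that the $A_4$ is the one containing the fixed $D_4$, so it is the one inside this $S_4$; this follows from Klein's description in Theorem \ref{struct-finite-subgps-PGL2}, since $S_4 \cap$ (any $A_4 \supseteq D_4$) is forced by the order-3 elements normalizing $D_4$. Either way the normalizer is $S_4$.

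\emph{The orbit.} I will compute directly with the generators of Theorem \ref{struct-finite-subgps-PGL2}. $D_4$ fixes every $p_\rho$, so the $S_4$-orbit is the image of $S_4/D_4 \cong S_3$, and it suffices to act by $\sigma_2$ and $\sigma_3$.
\begin{itemize}
\item Substituting $t_0 \mapsto -t_0+t_1$, $t_1 \mapsto t_0+t_1$ (the matrix $\sigma_2$) into $f_\rho, g_\rho$ and row reducing to the normal form $\langle t_0^3+\rho' t_0t_1^2,\ \rho' t_0^2t_1+t_1^3\rangle$ should give $\rho' = \pm\frac{\rho+3}{\rho-1}$ or similar.
\item Similarly, $\sigma_3$ should give $\frac{\rho-3}{\rho+1}$.
\item The diagonal element $\mathrm{diag}(1,i)$ lies in $S_4$ and normalizes $D_4$; it sends $p_\rho$ to $p_{-\rho}$ up to scaling.
\end{itemize}
Closing up under composition of these Möbius transformations yields the six-element list. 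This is the unique-normal-form bookkeeping I expect to be the main computational obstacle. The point to check is that the image pencil is again $D_4$-invariant, because $\sigma$ normalizes $D_4$. Writing the $D_4$-invariant $2$-planes in $\Sym^3 W$ as eigenspace decompositions, every $D_4$-invariant pencil that is not spanned by monomials is of the form $p_{\rho'}$; this makes the normal form automatic.

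\emph{Orbit criterion.} If $\sigma \in S_4$, then $\sigma p_\rho$ is trivially in the $\PGL_2$-orbit of $p_\rho$. Conversely, suppose $A p_\rho = p_{\rho'}$ with both pencils stable. Then $A\,\mathrm{Stab}(p_\rho)\,A^{-1} = \mathrm{Stab}(p_{\rho'})$, and both stabilizers contain $D_4$.
\begin{itemize}
\item In the $D_4$ case both stabilizers equal $D_4$, so $A \in N(D_4) = S_4$.
\item In the $A_4$ case, $A D_4 A^{-1}$ is the Sylow $2$-subgroup of $\mathrm{Stab}(p_{\rho'})$, so it equals $D_4$, and again $A \in S_4$.
\end{itemize}
For $\rho = \infty$ there is nothing to check, since the statement assumes stability and $p_\infty$ is not stable.

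As a cross-check, the generic 6-to-1 count of $f$ in Remark \ref{rel-bw-Newstead-and-Wall-reps} matches the size of the $S_3 = S_4/D_4$-orbit.
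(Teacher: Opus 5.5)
Your proposal is correct and follows essentially the same route as the paper: the two-case normalizer argument ($D_4$ versus $A_4$), the explicit action of $\sigma_2,\sigma_3$ on the Wall representatives, and the orbit criterion obtained by conjugating stabilizers into $N(D_4)=S_4$. Your unique-Sylow-subgroup argument in the $A_4$ case justifies more carefully the paper's bare claim that the two stabilizers must coincide; one small correction is that your hedged guess for $\sigma_2$ is off, since substituting and row-reducing gives exactly $\sigma_2\cdot p_\rho=p_{-\frac{\rho-3}{\rho+1}}$, and this, together with $\rho\mapsto-\rho$ and $\rho\mapsto\frac{\rho-3}{\rho+1}$, closes up to the stated six-element list.
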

\begin{proof}
From Proposition \ref{stabilizer-stable-orbits-not-Z0-expression}: we know that if $p_{\rho} \in \Gr(2,4)^s-\mathcal{O}_{A_4}$, then $\mathrm{Stab}_{{\PGL}_2}(p_{\rho}) = D_4$. It is a standard fact that $N_{{\PGL}_2}(D_4) = S_4$, and one may verify by direct calculations that the fixed $S_4$ with presentation given in Theorem \ref{struct-finite-subgps-PGL2} is indeed the normalizer of $D_4$. In addition, by direct computations:
\begin{align}
\sigma_3^{-1} \cdot p_{\rho} 
&= p_{-\frac{\rho+3}{\rho-1}}, \nonumber \\
\sigma_3 \cdot p_{\rho} &= p_{\frac{\rho-3}{\rho+1}}, \nonumber \\
\sigma_2 \cdot p_{\rho} &= p_{-\frac{\rho-3}{\rho+1}}, \nonumber \\
\sigma_3^{-1}\sigma_2 \cdot p_{\rho} &= p_{\frac{\rho+3}{\rho-1}}, \nonumber \\
\sigma_3\sigma_2 \cdot p_{\rho} &= p_{-\rho}, \label{action-outer-autom-gp-stabilizer-on-pencils}
\end{align}
where $\sigma_2, \sigma_3$ are generators of $S_4$ as given in Theorem \ref{struct-finite-subgps-PGL2}. Hence, the result follows for the case $p_{\rho}\in \Gr(2,4)^s-\mathcal{O}_{A_4}$.

Now, suppose $p_{\rho}\in \mathcal{O}_{A_4}$, so that 
\[
D_4 \leq \mathrm{Stab}_{{\PGL}_2}(p_{\rho}) \cong A_4
\]
by Proposition \ref{stabilizer-stable-orbits-not-Z0-expression}. It is straightforward to verify that $D_4$ is a normal subgroup of $\mathrm{Stab}_{{\PGL}_2}(p_{\rho})$, and that $\mathrm{Stab}_{{\PGL}_2}(p_{\rho})$ has the presentation
\[
\mathrm{Stab}_{{\PGL}_2}(p_{\rho}) = \langle \sigma_3 \rangle \ltimes D_4
\] 
where $\sigma_3$ is the order 3 generator of $A_4$ as given in Theorem \ref{struct-finite-subgps-PGL2}. Hence,
\[
N_{{\PGL}_2}(A_4) = S_4,
\]
and the $S_4$ action on $p_{\rho}\in \mathcal{O}_{A_4}$ is also given by Equations \ref{action-outer-autom-gp-stabilizer-on-pencils}.

Finally, suppose that two ${\PGL}_2$-stable pencils $p_{\rho}, p_{\rho'} \in \Gr(2,4)^s$ are in the same ${\PGL}_2$-orbit, i.e.,
\[
A\cdot p_{\rho} = p_{\rho'}
\]
for some $A\in {\PGL}_2$. The stabilizers of $p_{\rho}, p_{\rho'}$ must be equal in this case, so the above condition is either equivalent to
\[
A\in N(D_4) = S_4
\]
if $p_{\rho}, p_{\rho'}\notin \mathcal{O}_{A_4}$ or equivalent to
\[
A\in N(A_4) = S_4
\]
if $p_{\rho}, p_{\rho'}\in \mathcal{O}_{A_4}$. It follows that $p_{\rho}, p_{\rho'} \in \Gr(2,4)^s$ are in the same ${\PGL}_2$-orbit if and only if $\sigma\cdot p_{\rho} = p_{\rho'}$ for some $\sigma\in S_4$.
\end{proof}

Lastly, the $S_4$-symmetry on the collection of $\PGL_2$-stable $p_{\rho}$'s may be described concisely as an $S_4$-symmetry on $\mathbb{P}^1$ using the map $\PhiWall$ defined in Remark \ref{rel-bw-Newstead-and-Wall-reps}. This will be crucial for the proof of Theorem \ref{main-thm-S4-symmetry-stable-locus}.

\begin{myprop}
\label{S4-action-on-X}

Let $\PhiWall: \mathbb{P}^1 - \FWall \rightarrow \Gr(2,4)^s$ be the map defined in Remark \ref{rel-bw-Newstead-and-Wall-reps} given by
\[
\PhiWall(\rho:1) = p_{\rho}.
\]
Endow $\mathbb{P}^1$ with the $S_4$-action induced by its unique two-dimensional irreducible representation. Then, $\PhiWall$ is $S_4$-equivariant.
\end{myprop}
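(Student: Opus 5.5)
The plan is to make the $S_4$-action on $\mathbb{P}^1$ explicit and compare it with the action on pencils in Equations \ref{action-outer-autom-gp-stabilizer-on-pencils}. The key point is that $D_4 \trianglelefteq S_4$ fixes every $p_\rho$ (proof of Proposition \ref{stabilizer-stable-orbits-not-Z0-expression}). So the action of $S_4$ on the family $\{p_\rho\}$ factors through $S_4/D_4 \cong S_3$. The unique two-dimensional irreducible representation $V$ of $S_4$ is likewise the pullback of the standard representation of $S_3$ along $S_4 \to S_4/D_4$. The induced action on $\mathbb{P}(V) = \mathbb{P}^1$ is therefore an action of $S_3$ by Möbius transformations. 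It suffices to check that the $S_3$-action on $\mathbb{P}^1$ given by
\[
\rho \mapsto -\rho,\quad \rho \mapsto \tfrac{\rho-3}{\rho+1},\quad \rho\mapsto -\tfrac{\rho+3}{\rho-1},\ \dots
\]
is (up to a choice of coordinate on $\mathbb{P}(V)$) the projectivization of the standard two-dimensional representation of $S_3$.

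\textbf{Step 1.} Read off from Equations \ref{action-outer-autom-gp-stabilizer-on-pencils} the Möbius transformations induced by $\sigma_2$ and $\sigma_3$ on the parameter: $\sigma_3 \colon \rho \mapsto \frac{\rho-3}{\rho+1}$ and $\sigma_2\colon \rho\mapsto -\frac{\rho-3}{\rho+1}$. Lift them to matrices
\[
\tilde\sigma_3=\begin{pmatrix}1&-3\\1&1\end{pmatrix},\qquad \tilde\sigma_2=\begin{pmatrix}-1&3\\1&1\end{pmatrix}
\]
in $\GL_2(k)$. Verify that, up to scalars, $\tilde\sigma_3^3$ and $\tilde\sigma_2^2$ are the identity and $(\tilde\sigma_2\tilde\sigma_3)^2$ is scalar. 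This shows the rule $\sigma\cdot(\rho:1) := (\text{parameter of } \sigma\cdot p_\rho)$ is a well-defined $S_3$-action on $\mathbb{P}^1$, with $D_4$ acting trivially. The action preserves $\FWall=\{0,\infty,\pm1,\pm3\}$, since stability is $\PGL_2$-invariant and $\FWall$ is exactly the non-stable locus. This $S_3$-action is the one for which $\PhiWall$ is tautologically equivariant, because $\PhiWall(\sigma\cdot\rho)=p_{\sigma\cdot\rho}=\sigma\cdot p_\rho$.

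\textbf{Step 2.} Identify this action with $\mathbb{P}(V)$. Rescale the lifts so that they generate an honest subgroup of $\GL_2(k)$ isomorphic to $S_3$. For instance, $\tilde\sigma_3/2$ has trace $1$ and determinant $1$, hence order $3$ in $\mathrm{SL}_2$ (it satisfies $x^2-x+1=0$, so its cube is $-I$, and one uses $-\tilde\sigma_3/2$ instead, which has order $3$). Similarly $\tilde\sigma_2/2$ has trace $0$ and determinant $-1$, hence is an involution. Check that these matrices satisfy the $S_3$ relations and are not simultaneously diagonalizable. The resulting two-dimensional representation of $S_3$ is then irreducible, and since $\mathrm{char}\,k\neq 2,3$ it must be the standard one. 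Pulling back along $S_4\to S_3$ gives $V$.

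\textbf{Main obstacle.} The hard step is checking that the geometric $S_4$-action on pencils actually factors through $S_4/D_4$ compatibly with the chosen generators. In particular, the outer automorphism coming from conjugation by $N(D_4)$ must match the quotient map used to define $V$. Any two surjections $S_4\to S_3$ differ by an automorphism of $S_3$, which is inner, and conjugating by an element of $S_3$ preserves the isomorphism class of the representation. So the identification holds regardless of which surjection is used, provided Step 2 is done carefully. The remaining checks are routine matrix computations.
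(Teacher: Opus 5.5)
Your proposal is correct and takes essentially the same approach as the paper. Both define the action on the parameter tautologically via $\sigma\cdot p_\rho$, read off the M\"obius transformations of $\sigma_2,\sigma_3$ from Equations \ref{action-outer-autom-gp-stabilizer-on-pencils}, and identify the resulting action, whose kernel is $D_4$, with $\mathbb{P}(V)$. Your Step 2 (normalized lifts $-\tilde\sigma_3/2$ and $\tilde\sigma_2/2$, the $S_3$ relations, and the irreducibility check in characteristic $\neq 2,3$) is a more careful version of the paper's short trace remark, since traces of elements of $\PGL_2$ only make sense after such a choice of linear lifts.
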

\begin{proof}
We will construct an $S_4$-action on $\mathbb{P}^1$ that is compatible with that on the $p_{\rho}$'s and then show that our construction agrees with the $S_4$-action induced by its unique two-dimensional irreducible representation. For each $\rho\in k\cup\{ \infty \}$ and each $\sigma\in S_4$, denote
\[
p_{\sigma\cdot \rho} := \sigma \cdot p_{\rho}
\] 
where $\sigma\cdot\rho$ is given by Equations \ref{action-outer-autom-gp-stabilizer-on-pencils}. Define an action of $S_4$ on $\mathbb{P}^1$ via
\[
\pi: S_4\times \mathbb{P}^1 \rightarrow \mathbb{P}^1, (\sigma, (\rho:1)) \mapsto (\sigma\cdot\rho:1).
\]
Hence, $\PhiWall$ is $S_4$-equivariant when $\mathbb{P}^1$ is endowed with the $S_4$-action given by $\pi$.

Now, using Equations \ref{action-outer-autom-gp-stabilizer-on-pencils}, we find that for $\sigma_3, \sigma_2 \in S_4$ as in Lemma \ref{S4-symmetry-of-locus-of-stable-points}:
\begin{align*}
\sigma_3 \cdot (\rho:1) &= \left( \frac{\rho-3}{\rho+1}: 1 \right), \\
\sigma_2 \cdot (\rho:1) &= \left( -\frac{\rho-3}{\rho+1}: 1 \right).
\end{align*}
Since the kernel of $\pi$ is $D_4$, and
\[
\Tr(\pi(\sigma_2)) = 0, \Tr(\pi(\sigma_3)) \in k^*
\]
whenever $k$ is of characteristic not equal to 2 or 3, we conclude that $\pi$ is induced by the unique 2-dimensional irreducible representation of $S_4$ by composing that representation with the quotient map from $k^2\setminus \{ (0,0) \}$ to $\mathbb{P}^1$.
\end{proof}

\begin{myrmk}
\label{rmk-S4-symm-observation-in-paper-Wa83}

The following $S_4$-symmetry on the subset of $\mathbb{P}(\Sym^4 W)$ consisting of every
\[
\Wr(p_{\rho}) = [\rho:0:3-\rho^2:0:\rho] = [1:0:3\rho^{-1}-\rho:0:1],  \text{ where }\rho\in k^*-\{ \pm 1, \pm 3\},
\] 
was observed by Wall in \cite{Wa83}: apply the change of variables $4\lambda - 2 := 3\rho^{-1}-\rho$ and set $f_{\lambda} := \Wr(p_{\rho})$, so that $S_4$ acts on the subset of all $f_{\lambda}$ with $\lambda\in k^* - \{1\}$. Then, the $S_4$-orbit of $f_{\lambda}$ is given by the action of the anharmonic subgroup of $\PGL_2$ on the index $\lambda$, i.e.,
\[
S_4\cdot f_{\lambda}
= \left\{ f_{\lambda}, f_{1-\lambda^{-1}}, f_{(1-\lambda)^{-1}}, f_{\lambda^{-1}}, f_{1-\lambda},  f_{\lambda(\lambda-1)^{-1}} \right\}.
\]
In this way, the $S_4$-symmetry on $\Gr(2,4)^s$ that we describe may be seen as lifting the $S_4$-symmetry from \cite{Wa83} over fibers of the Wronskian map. 

We also remark that it takes more work to establish our $S_4$-symmetry on $\Gr(2,4)^s$ directly from the one found by Wall on the $\Wr(p_{\rho})$'s \cite{Wa83}: there exists points such as $[1:0:0:0:1]\in \mathbb{P}(\Sym^4 W)$ that are fixed by a $D_8\leq S_4$ but whose fibers over $\Wr$ are only fixed by $D_4$, so these points make computations of the $S_4$-action on the $p_{\rho}$'s more complicated if we proceed from Wall's observation.

Finally, we also observe that the $\mathbb{P}^1$ appearing in Equation \ref{key-quot-stack-isom} is not identified with the 2-dimensional vector space $W=k[t_0,t_1]_1$ in any way: $W$ is not an $S_4$-representation, while $\mathbb{P}^1$ is induced by the unique 2-dimensional irreducible representation of $S_4$.
\end{myrmk}

\section{Proof of Theorem \ref{main-thm-S4-symmetry-stable-locus}}
\label{description-of-equivar-Chow-ring-of-locus-of-stable-pts-S4-symm}

Now, we are ready to prove Theorem \ref{main-thm-S4-symmetry-stable-locus}.
\begin{proof}[Proof of Theorem \ref{main-thm-S4-symmetry-stable-locus}]
Define the morphism of schemes over $\mathrm{Spec}(k)$
\[
\Phi: {\PGL}_2\times (\mathbb{P}^1 - \FWall) \rightarrow \Gr(2,4)^s, (A, (\rho:1)) \mapsto A\cdot p_{\rho}
\]
obtained by extending the map $\PhiWall: \mathbb{P}^1 - \FWall \rightarrow \Gr(2,4)^s$ to ${\PGL}_2\times (\mathbb{P}^1 - \FWall)$. We have already shown in Proposition \ref{S4-action-on-X} that $\PhiWall$ is $S_4$-equivariant, so $\Phi$ descends naturally to the quotient map
\[
{\PGL}_2\times_{S_4} (\mathbb{P}^1 - \FWall) \rightarrow \Gr(2,4)^s
\]
where the algebraic space ${\PGL}_2\times_{S_4} (\mathbb{P}^1 - \FWall) := ({\PGL}_2\times (\mathbb{P}^1 - \FWall))/S_4$ is defined via the diagonal action of $S_4$:
\[
S_4 \times ({\PGL}_2 \times (\mathbb{P}^1 - \FWall)) \rightarrow {\PGL}_2 \times (\mathbb{P}^1 - \FWall),
(\sigma, (A, (x:y))) \mapsto (A\sigma, \sigma^{-1}\cdot(x:y)).
\]

We shall use the following general fact from the theory of algebraic spaces to prove that $\Phi$ descends to an isomorphism from ${\PGL}_2\times_{S_4} (\mathbb{P}^1 - \FWall)$ to $\Gr(2,4)^s$.

\begin{description}
\item [Fact] (Proposition 5.2.5 of \cite{Ol16}) Let $X\rightarrow Y$ be an \'{e}tale surjective morphism of algebraic spaces over a base scheme $S$ with $X$ being a scheme over $S$. Then, $R := X\times_Y X$ is a scheme over $S$, and the inclusion $R \xhookrightarrow{} X\times_S X$ is an \'{e}tale equivalence relation. Moreover, the induced quotient map
\[
X/R \rightarrow Y
\]
is an isomorphism.
\end{description}

Hence, it suffices for us to show that:
\begin{description}
\item[(1)] $\Phi$ is $PGL_2$-equivariant, surjective and \'{e}tale;
\item[(2)] The equivalence relation on ${\PGL}_2\times (\mathbb{P}^1 - \FWall)$ given by
\[
R:= ({\PGL}_2\times (\mathbb{P}^1 - \FWall)) \times_{Gr(2,4)^s} ({\PGL}_2\times (\mathbb{P}^1 - \FWall))
\]
is the equivalence relation generated by the $S_4$-action on ${\PGL}_2\times (\mathbb{P}^1 - \FWall)$, so that
\[
({\PGL}_2\times (\mathbb{P}^1 - \FWall))/R = {\PGL}_2\times_{S_4} (\mathbb{P}^1 - \FWall).
\]
\end{description}

\begin{description}
\item[Proof of (1)] Firstly, we show that $\Phi$ is ${\PGL}_2$-equivariant. Let $A, B\in {\PGL}_2$, and let $\rho \in k\cup \{\infty\}$ be such that $\rho \neq 0,\infty,\pm 1, \pm 3$ so that $(\rho: 1)\in \mathbb{P}^1 - \FWall$. Notice that
\begin{align*}
\Phi(B \cdot (A, (\rho:1))) &:= \Phi(BA, (\rho:1)) \\
&= \langle f_{\rho}(A^{-1}B^{-1}(t_0,t_1)), g_{\rho}(A^{-1}B^{-1}(t_0,t_1)) \rangle \\
&= B\cdot \Phi(A, (\rho:1)),
\end{align*}
so it follows that $\Phi$ is a ${\PGL}_2$-equivariant morphism. Moreover, notice that $\Phi$ is surjective from Proposition \ref{Wall-reps-stable-orbits}. 

Next, in order to show that $\Phi$ is \'{e}tale, we will check that $\Phi$ is flat and unramified. These will be direct consequences of the following claim.
\begin{description}
\item [Claim] For every $p\in \Gr(2,4)^s$, the fiber $\Phi^{-1}(p)$ consists of exactly 24 points.
\item [Proof of Claim] Let $p=\langle f,g \rangle \in \Gr(2,4)^s$. By Proposition \ref{Wall-reps-stable-orbits}, we may fix some $\rho\in k\cup\{\infty\}$ with $(\rho:1)\in \mathbb{P}^1-\FWall$ and some $A\in {\PGL}_2$ such that $A\cdot p_{\rho}=p$. Hence,
\[
\Phi^{-1}(p) \supset X := \{ (A\tau\sigma, \sigma^{-1}\cdot (\rho:1)) \mid \tau\in \mathrm{Stab}_{{\PGL}_2}(p_{\rho}), \sigma \in S_4  \}
\]
where $|X|=|S_4|=24$, so it suffices for us to show the reverse inclusion. From Lemma \ref{S4-symmetry-of-locus-of-stable-points}, $p_{\rho'}$ is in the same ${\PGL}_2$-orbit as $p_{\rho}$ if and only if $\sigma\cdot p_{\rho}=p_{\rho'}$ for some $\sigma\in S_4$. It follows that
\[
(B, (\rho':1)) \in \Phi^{-1}(p) \implies (\rho':1) = \sigma^{-1}\cdot (\rho:1)
\]
for some $\sigma \in S_4$, so
\begin{align*}
B\sigma^{-1}\cdot p_{\rho} = A \cdot p_{\rho}
&\iff A^{-1}B\sigma^{-1}\cdot p_{\rho} = p_{\rho} \\
&\iff A^{-1}B\sigma^{-1}= \tau \text{ for some }\tau \in \mathrm{Stab}_{{\PGL}_2}(p_{\rho}), \\
&\iff B = A\tau\sigma \text{ for some }\tau \in \mathrm{Stab}_{{\PGL}_2}(p_{\rho}).
\end{align*}
Thus, the claim follows.
\end{description}

It follows that $\Phi$ is a finite morphism with equi-dimensional (all finite) fibers. Since both the source and the target of $\Phi$ are regular schemes over $\mathrm{Spec}(k)$, $\Phi$ is flat by miracle flatness. The fact that $\Phi$ is unramified follows from $\Phi^{-1}(p)$ being a discrete set of 24 points for every geometric point $p\in \Gr(2,4)^s$. Hence, $\Phi$ is an \'{e}tale morphism.

\item[Proof of (2)] Let
\[
R:= ({\PGL}_2\times (\mathbb{P}^1 - \FWall)) \times_{Gr(2,4)^s} ({\PGL}_2\times (\mathbb{P}^1 - \FWall)).
\]
In the context of the \'{e}tale surjective morphism $\Phi$, we have that
\begin{align*}
((A, (\rho:1)), (B, (\rho':1))) \in R
&\iff A \cdot p_{\rho} = B\cdot p_{\rho'}\\
&\iff B = A\tau\sigma,
\end{align*}
 for some $\tau\in \mathrm{Stab}_{{\PGL}_2}(p_{\rho})$ and $\sigma\in S_4$ such that $\sigma\cdot p_{\rho'} = p_{\rho}$ by our previous claim, so $R$ is the equivalence relation on ${\PGL}_2\times (\mathbb{P}^1 - \FWall)$ given by the diagonal $S_4$-action. It follows that $\Phi$ induces a ${\PGL}_2$-equivariant isomorphism of algebraic spaces
\[
{\PGL}_2\times_{S_4} (\mathbb{P}^1 - \FWall) \xrightarrow{\cong} Gr(2,4)^s
\]
by universality of factorization through quotient maps.
\end{description}
\end{proof}

The isomorphism of quotient stacks given by Equation \ref{key-quot-stack-isom} readily follows from basic facts of algebraic stacks, which we record as the following Corollary.

\begin{mycor}
\label{struct-stable-locus-quotient-stacks}

Let $\Gr(2,4)^s$ and $\FWall \subset \mathbb{P}^1$ be as previously defined, and let $S_4 = N_{{\PGL}_2}(D_4)$ act on $\mathbb{P}^1$ as in Proposition \ref{S4-action-on-X}. Also, let ${\PGL}_2$ act on ${\PGL}_2\times_{S_4} (\mathbb{P}^1 - \FWall)$ and on $\Gr(2,4)^s$ as in Theorem \ref{main-thm-S4-symmetry-stable-locus}. Then,
\[
[({\PGL}_2\times_{S_4} (\mathbb{P}^1 - \FWall))/{\PGL}_2] \cong [\Gr(2,4)^s/{\PGL}_2]
\]
as quotient stacks over $\mathrm{Spec}(k)$. It follows that
\[
[(\mathbb{P}^1 - \FWall)/S_4] \cong [\Gr(2,4)^s/{\PGL}_2]
\]
as quotient stacks over $\mathrm{Spec}(k)$.
\end{mycor}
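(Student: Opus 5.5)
The plan is to deduce both isomorphisms formally from the $\PGL_2$-equivariant isomorphism of algebraic spaces ${\PGL}_2\times_{S_4} (\mathbb{P}^1 - \FWall) \cong \Gr(2,4)^s$ established in Theorem \ref{main-thm-S4-symmetry-stable-locus}. No new geometry about $\Gr(2,4)$ should be needed.

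\textbf{First isomorphism.} A $\PGL_2$-equivariant isomorphism $\Psi: Y \xrightarrow{\cong} X$ of algebraic spaces with $\PGL_2$-action induces an isomorphism $[Y/\PGL_2]\cong[X/\PGL_2]$ of quotient stacks. On $T$-points, a pair $(P\to T,\ P\to Y)$, with $P\to T$ a $\PGL_2$-torsor and $P\to Y$ equivariant, is sent to $(P\to T,\ \Psi\circ(P\to Y))$. The inverse is given by composing with $\Psi^{-1}$. Both assignments are compatible with pullback and with morphisms of torsors, so they define mutually inverse morphisms of stacks. Applying this to $\Psi$ from Theorem \ref{main-thm-S4-symmetry-stable-locus} gives the first claimed isomorphism.

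\textbf{Second isomorphism.} Here I would use the standard fact that for a closed subgroup $H\le G$ and an $H$-space $Z$ there is a canonical isomorphism
\[
[(G\times_H Z)/G]\cong[Z/H].
\]
I would take $G=\PGL_2$, $H=S_4$ and $Z=\mathbb{P}^1-\FWall$, with $S_4$ acting on $G\times Z$ by $(A,z)\mapsto (A\sigma,\sigma^{-1}z)$ exactly as in Theorem \ref{main-thm-S4-symmetry-stable-locus}. The morphism $[Z/H]\to[(G\times_H Z)/G]$ sends an $H$-torsor $Q\to T$ with equivariant map $Q\to Z$ to the induced $G$-torsor $G\times_H Q\to T$, together with the map
\[
G\times_H Q\to G\times_H Z,\qquad [A,q]\mapsto [A,\phi(q)].
\]
The inverse sends a $G$-torsor $P\to T$ with $G$-map $P\to G\times_H Z$ to the fibre product $Q:=P\times_{G/H}G$. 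This is formed using the composite $P\to G\times_H Z\to G/H$ and the $H$-torsor $G\to G/H$. Then $Q$ is an $H$-torsor over $T$, and it carries the evident $H$-map to $Z$ obtained by projecting to the $Z$-coordinate after trivialising along $G$. Composing the two isomorphisms gives the final statement.

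\textbf{Main obstacle.} The only step needing care is the second isomorphism. One must check that the reduction of structure group is well defined when $G\times_H Z$ is merely an algebraic space. This is harmless here because $S_4$ is finite and acts freely on $\PGL_2\times Z$, so $\PGL_2\times Z\to \PGL_2\times_{S_4}Z$ is an étale $S_4$-torsor. Alternatively, I would cite the statement directly from the stacks literature, for instance Romagny's or Olsson's treatment of induced actions, noting that our diagonal-action convention matches theirs after replacing $\sigma$ by $\sigma^{-1}$.
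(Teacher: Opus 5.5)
Your two steps are the paper's own. First, the ${\PGL}_2$-equivariant isomorphism of algebraic spaces from Theorem \ref{main-thm-S4-symmetry-stable-locus} induces an isomorphism of quotient stacks; the paper phrases this via a 2-Cartesian square, you via $T$-points. Second, the induction isomorphism $[(G\times_H Z)/G]\cong[Z/H]$ for the closed subgroup $H=S_4\le G={\PGL}_2$, which the paper simply cites. So the approach is correct, but one step of your explicit inverse fails as written.

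The fibre product $Q:=P\times_{G/H}G$ is the pullback of the $H$-torsor $G\to G/H$ along $P\to G/H$. It is therefore an $H$-torsor over $P$, not over $T$. It has relative dimension $3$ over $T$, whereas an $S_4$-torsor over $T$ is finite \'{e}tale of degree $24$. In fact $P\times_{G/H}G\cong G\times Q'$, where $Q'$ is the object you actually want.

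The correct construction is the fibre over the identity coset, $Q':=P\times_{G/H}\{eH\}$. Equivalently, it is the quotient of your $P\times_{G/H}G$ by the free diagonal action $a\cdot(p,g)=(ap,ag)$. Three points then need checking:
\begin{enumerate}
\item $Q'$ is $H$-stable, because $H$ fixes $eH$.
\item $Q'$ is an $H$-torsor over $T$. \'{E}tale-locally $P\to G/H$ is $(g,t)\mapsto g\,\tilde s(t)H$ for some lift $\tilde s\colon T\to G$, so the fibre over $eH$ is $\{(h\tilde s(t)^{-1},t)\}\cong H\times T$.
\item The $H$-map $Q'\to Z$ sends $q$ to the unique $z$ with $\psi(q)=[e,z]$, where $\psi\colon P\to G\times_H Z$ is your $G$-map. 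Uniqueness holds because $[e,z]=[e,z']$ forces $z=z'$ under $(A,z)\sim(A\sigma,\sigma^{-1}z)$. Equivariance follows from $[h,z]=[e,hz]$.
\end{enumerate}

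This is presumably what you meant by ``trivialising along $G$'', but it should be stated this way. With that correction, or by simply citing the induction isomorphism as the paper does, the proposal is complete.
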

\begin{proof}
By universality of factorization through quotient maps, we have a 2-commuting Cartesian diagram
\[
  \begin{tikzcd}
  {\PGL}_2\times_{S_4} (\mathbb{P}^1 - \FWall) \arrow{r}{\cong} \arrow{d}{} 
  & Gr(2,4)^s \arrow{d}{} \\
  {[({\PGL}_2\times_{S_4} (\mathbb{P}^1 - \FWall))/{\PGL}_2]}  \arrow{r}{} & {[\Gr(2,4)^s/{\PGL}_2]}
  \end{tikzcd}
\]
where the vertical arrows are the respective quotient maps. Hence, the bottom arrow induced by the top isomorphism must also be an isomorphism of quotient stacks. 

Now, since $S_4\leq PGL_2$ is a closed subgroup:
\[
[({\PGL}_2\times_{S_4} (\mathbb{P}^1 - \FWall))/{\PGL}_2] \cong [(\mathbb{P}^1 - \FWall)/S_4]
\]
as quotient stacks, so the last isomorphism follows.
\end{proof}

\begin{myrmk}
\label{rmk-char-2-or-3-case-main-thm}

Given that our proof of Theorem \ref{main-thm-S4-symmetry-stable-locus} is written in the framework of algebraic spaces, it would be reasonable to conjecture that Theorem \ref{main-thm-S4-symmetry-stable-locus} also holds in characteristics 2 or 3, i.e., over any algebraically closed field regardless of the characteristic. However, at the fundamental level, our proof is constructed using the Wall representatives as well as the $S_4$-equivariant map $\PhiWall: \mathbb{P}^1 - \FWall \rightarrow \Gr(2,4)^s$ defined in Remark \ref{rel-bw-Newstead-and-Wall-reps}, which rely on the assumptions made in \cite{Ne81} that the characteristic of the base field is not equal to 2 or 3 . Constructing a proof of Theorem \ref{main-thm-S4-symmetry-stable-locus} in characteristic 2 or 3 will likely involve a different and much more lengthy argument, so we choose to omit discussions for the characteristic 2 or 3 cases.
\end{myrmk}

\section{Proof of Theorem \ref{main-thm-Chow-ring-stable-locus}}
\label{computation-equivar-Chow-ring-of-locus-of-stable-pts}

By Corollary \ref{struct-stable-locus-quotient-stacks} and results on Chow rings of quotient stacks from Section 5 of \cite{EG98}:
\[
A^*_{S_4}(\mathbb{P}^1 - \FWall)\cong A^*_{\PGL_2}(\Gr(2,4)^s),
\]
since $\PGL_2$ acts smoothly on $\Gr(2,4)^s$ giving rise to the smooth quotient stack $[\Gr(2,4)^s/\PGL_2]$. We will use excision to calculate $A^*_{S_4}(\mathbb{P}^1 - \FWall)$. The required exact sequences are the following:
\begin{align}
A^*_{S_4}(F_1) 
&\rightarrow A^*_{S_4}(\mathbb{P}^1)
\rightarrow A^*_{S_4}(\mathbb{P}^1 - F_1)
\rightarrow 0, \label{excision-seq-X1-in-P1} \\
A^*_{S_4}(F_2) 
&\rightarrow A^*_{S_4}(\mathbb{P}^1 - F_1)
\rightarrow A^*_{S_4}(\mathbb{P}^1 - \FWall)
\rightarrow 0, \label{excision-seq-X2-in-P1}
\end{align}
where the (disjoint) $S_4$-invariant subsets $F_1,F_2\subset \mathbb{P}^1$ are given by
\begin{align}
F_1 &= \{ (\rho:1) \mid \rho = \pm 1, \infty \} = \{ (1: 1), (-1:1), (1:0) \} \subset \mathbb{P}^1, \label{X1-for-orbit-Z2}\\
F_2 &= \{ (\rho:1) \mid \rho = 0, \pm 3 \} = \{ (0:1), (3:1), (-3:1) \} \subset \mathbb{P}^1, \label{X2-for-orbit-Z4}
\end{align}
so that $F_1 \cup F_2 = \FWall$. 

For the rest of this Section, denote $q_1:=(1:0)$ and $q_2:= (0:1)$. From direct calculations:
\begin{align}
\mathrm{Stab}_{S_4}(q_1) &= \mathrm{Stab}_{S_4}(q_2) = \langle D_4, \sigma_3^{-1}\sigma_2 \rangle \cong D_8 \leq S_4, \label{S4-stabilizer-X1-X2}
\end{align}
where $\sigma_3, \sigma_2$ are given as in Theorem \ref{struct-finite-subgps-PGL2}. Thus,
\[
A^*_{S_4}(F_1) \cong A^*_{S_4}(F_2)  \cong A^*_{D_8}(\{ \text{pt} \}).
\]
Hence, in order to obtain the presentation for $A^*_{S_4}(\mathbb{P}^1 - \FWall)$ given in Theorem \ref{main-thm-Chow-ring-stable-locus}, it suffices for us to compute the following:
\begin{enumerate}
\item A presentation for $A^*_{S_4}(\mathbb{P}^1)$: this follows directly from the projective bundle formula after we use a result by Burt Totaro \cite{To99} to find an explicit presentation for $A^*_{S_4}$, which we record as Lemma \ref{S4-equivar-Chow-ring-of-a-point};
\item The images of the pushforward maps on the left-hand side of the exact sequences \ref{excision-seq-X1-in-P1} and \ref{excision-seq-X2-in-P1}: these are computed by Lemma \ref{excision-D8-equivar-Chow-X1-X2-from-P1} and Lemma \ref{transfer-hyperpl-and-chern-classes-D8-to-S4-over-P1}, whose proofs are essentially lengthy but standard arguments via diagram chasing.
\end{enumerate}

\begin{mylemma}
\label{S4-equivar-Chow-ring-of-a-point}

The $S_4$-equivariant Chow ring of a point is given by
\[
A^*_{S_4} = \ZZ[\alpha, \nu, \zeta_1, \eta]/(2\alpha=2\nu=4\zeta_1=3\eta=0, \alpha\nu^{j}=\alpha^{j+1}(\zeta_1+\alpha^2)^j : j \geq 1)
\]
where the degrees of the generators are given by:
\[
\alpha \in A^1_{S_4}, \zeta_1, \eta \in A^2_{S_4}, \nu \in A^3_{S_4}.
\]
\end{mylemma}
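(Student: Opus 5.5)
The plan is to read the ring off from Totaro's computation of the Chow ring of $BS_4$ \cite{To99}, translating his generators into the ones named in the statement. Totaro shows that $A^*_{S_4}$ is generated by Chern classes of representations: the sign character, the two-dimensional irreducible representation $V$ (pulled back along $S_4\to S_3$), and the three-dimensional standard representation. Accordingly, I would define
\[
\alpha := c_1^{S_4}(\mathrm{sgn}), \qquad \zeta_1,\ \nu \in A^*_{S_4} \text{ (Chern classes of the standard representation, suitably corrected by powers of } \alpha),
\]
and take $\eta$ to be the $3$-primary degree-two class coming from $c_2(V)$. The bookkeeping should be organized one prime at a time, since $A^{>0}_{S_4}$ is $24$-torsion.

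The $3$-primary part comes first. The $3$-Sylow subgroup is $C_3$, and its normalizer in $S_4$ is $S_3$. Restriction gives an isomorphism of $A^*_{S_4}\otimes\ZZ_{(3)}$ onto the stable (Weyl-invariant) elements of $A^*_{C_3}=\ZZ[x]/(3x)$. These are the elements fixed by $x\mapsto -x$, so the invariant ring is $\ZZ_{(3)}\oplus\bigoplus_{j\geq 1}\ZZ/3\cdot x^{2j}$. It is generated by $\eta:=x^2$, which restricts from $c_2^{S_4}(V)$, with the single relation $3\eta=0$. This contributes the generator $\eta$, and $\eta$ is killed by $2$-torsion classes and multiplied freely with nothing else.

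The $2$-primary part comes next. The $2$-Sylow subgroup is $D_8$, and $A^*_{S_4}\otimes\ZZ_{(2)}$ injects into $A^*_{D_8}$ as the stable elements. Here I would invoke Totaro's explicit result \cite{To99}. In degrees one, two and three it is generated by $\alpha$ (sign), $\zeta_1$ (the order-$4$ class $c_2$ of the standard representation twisted so that it has order $4$), and $\nu$ (the class $c_3$), with $2\alpha=4\zeta_1=2\nu=0$.

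The remaining relations are $\alpha\nu^j=\alpha^{j+1}(\zeta_1+\alpha^2)^j$. I would verify them by restricting to the subgroups that detect $2$-torsion classes. These are the elementary abelian subgroups, namely $\langle(12),(34)\rangle$ and $D_4$, together with the cyclic subgroup $C_4$, since $A^*_{S_4}$ is detected on them by Totaro's result. On each of these subgroups the Chern classes can be computed by the splitting principle. The key observation is that $\alpha$ restricts to zero on $D_4$ and on $A_4$-type subgroups, and that on $\langle(12),(34)\rangle$ one has $\nu=\alpha(\zeta_1+\alpha^2)$ up to the factor required. Multiplying this by $\alpha^j$ then gives the stated family. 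Completeness of the relations follows from comparing Poincaré series with Totaro's, or with the mod $2$ cohomology $H^*(S_4;\mathbb{F}_2)=\mathbb{F}_2[\sigma_1,\sigma_2,c_3]/(\sigma_1 c_3)$, where $\sigma_1,\sigma_2,c_3$ have degrees $1,2,3$, after passing to the degree-doubled cycle map. Totaro proves that the cycle map is injective for $S_4$.

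I expect the main obstacle to be this last step. Matching the exact normalization of $\zeta_1$ and $\nu$ with Totaro's generators, and confirming that the infinite family $\alpha\nu^j=\alpha^{j+1}(\zeta_1+\alpha^2)^j$ gives exactly the full ideal of relations in the $2$-primary part, requires a careful detection argument. In particular, it needs an explicit rank count in each degree against $H^{2*}(BS_4;\ZZ)$. I would try the rank count first, since it reduces the question to finite linear algebra in each degree.
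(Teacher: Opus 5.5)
Your architecture differs from the paper's. The paper takes Thomas's presentation of $H^*(BS_4;\mathbb{Z})$, uses Totaro's comparison to identify $A^*_{S_4}$ with its even-degree part, and \emph{defines} $\nu$ as the image of $\nu'^2$. You instead build the generators from Chern classes, handle $p=3$ by stable elements in $C_3$, and handle $p=2$ by detection on $E=\langle(12),(34)\rangle$, $V_4$ and $C_4$. The $p=3$ part is fine, up to the sign $c_2(V)|_{C_3}=-x^2$, and the route can be completed.

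\textbf{The relation check fails for your $\nu$.} As written, the step verifying $\alpha\nu=\alpha^2(\zeta_1+\alpha^2)$ fails, because $\nu$ is not $c_3(\rho)$, nor $c_3(\rho)$ plus a multiple of $\alpha^3$ ($\rho$ the standard representation). Write $A^*_E=\mathbb{Z}[u,v]/(2u,2v)$. Then $\rho|_E=\mathbf{1}\oplus\chi_u\oplus\chi_v$, so $c_3(\rho)|_E=0$, $c_2(\rho)|_E=uv$ and $\alpha|_E=u+v$. In fact $\alpha c_3(\rho)=0$ in $A^*_{S_4}$: it also vanishes on $V_4$ (where $\alpha\mapsto 0$), on $C_4$ (where it is $2u\cdot 2u^3$) and on $C_3$.

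Any admissible $\zeta_1$ is $\pm c_2(\rho)+\lambda\alpha^2$ modulo $3$-primary terms. Hence $\alpha^2(\zeta_1+\alpha^2)|_E=(u+v)^2\bigl(uv+(\lambda+1)(u+v)^2\bigr)$, which is never equal to $\alpha(c_3(\rho)+\mu\alpha^3)|_E=\mu(u+v)^4$. In particular your claim $\nu|_E=\alpha(\zeta_1+\alpha^2)|_E$ is false for the class you name.

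\textbf{The fix.} The essential correction is by $c_1(\rho)c_2(\rho)=\alpha c_2(\rho)$, not by a power of $\alpha$. Take $\zeta_1:=\alpha^2-3c_2(\rho)$, $\nu:=c_3(\rho)+c_1(\rho)c_2(\rho)$ and $\eta:=-c_2(V)$. Then $\alpha\nu-\alpha^2(\zeta_1+\alpha^2)=\alpha c_3(\rho)=0$. The restrictions are as follows, with $e_1=u+v$, $e_2=uv$:
on $E$, $\alpha\mapsto e_1$, $\zeta_1\mapsto e_1^2+e_2$, $\nu\mapsto e_1e_2$;
on $V_4$, $\alpha\mapsto 0$, $\zeta_1\mapsto d_2=p^2+pq+q^2$, $\nu\mapsto d_3=pq(p+q)$;
on $C_4$, $\alpha\mapsto 2u$, $\zeta_1\mapsto 3u^2$, $\nu\mapsto 0$.
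Only $j=1$ needs checking, since $\alpha\nu^{j+1}=\alpha^{j}(\zeta_1+\alpha^2)^{j}\cdot\alpha\nu$ by the $j$-th relation. On $C_4$ the relation holds because $\alpha^2\mapsto 4u^2=0$ and $\nu\mapsto 0$, not because $\alpha\mapsto 0$.

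\textbf{Completeness needs a different argument.} Injectivity of the cycle map only bounds $|A^i_{S_4}|$ from above. So a rank count against $H^{2i}(BS_4;\mathbb{Z})$ cannot show that the presented ring $R$ maps injectively to $A^*_{S_4}$, and mod $2$ cohomology cannot even see the classes $2\zeta_1^c$. What works, uniformly in degree, is to show that $R\to\prod_K A^*_K$ is injective for $K\in\{E,V_4,C_4,C_3\}$:
\begin{itemize}
\item By the relation, the $2$-primary part of $R$ is spanned by $\alpha^a\zeta_1^c$ ($a\geq 1$), $\nu^m\zeta_1^c$ ($m\geq 1$) and $\zeta_1^c$.
\item Restriction to $V_4$ kills the first family and separates the rest modulo $2$, since $d_2,d_3$ are algebraically independent.
\item Restriction to $E$ then separates the first family, since $e_1$ and $e_1^2+e_2$ are algebraically independent.
\item $C_4$ detects $2\zeta_1^c\mapsto 2u^{2c}\neq 0$.
\item The $3$-primary part injects into $A^*_{C_3}$ via $\eta\mapsto x^2$.
\end{itemize}
Surjectivity then comes either from Totaro's generation by Chern classes, using $c(V)=1+\alpha-\eta$, $c_2(\rho)=\zeta_1+\alpha^2-\eta$ and $c_3(\rho)=\nu+\alpha\zeta_1+\alpha^3$, or from the order count $|R^i|=|H^{2i}(BS_4;\mathbb{Z})|$. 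So the rank count proves surjectivity, not completeness.

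Compared with the paper, your route yields explicit Chern-class formulas for the generators, which the paper later needs for $c_1^{S_4}(V)$ and $c_2^{S_4}(V)$. The paper's route gets the correct $\nu$ for free from $\nu'^2$, but rests entirely on Thomas's presentation and on identifying $A^*_{S_4}$ with $H^{\mathrm{ev}}(BS_4;\mathbb{Z})$.
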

\begin{proof}
Denote the classifying space of a group $G$ by $BG$. Lemma 10.1 of \cite{To99} states that the $p$-localization of the Chow group of $BS_n$ for any $n\geq 1$ is given by
\[
A^*(BS_n)_{(p)} = A^*(BH_p) \cap H^*(BS_n, \ZZ)_{(p)} \subset H^*(BH_p, \ZZ)_{(p)}
\]
where $H_p \leq S_n$ is a Sylow $p$-subgroup of $S_n$ for $p$ prime. \footnote{Here, we remark that Totaro denotes the Chow ring of the classifying space of a group $G$ by $CH^*BG$ and the \textit{operational} Chow ring of a space $X$ by $A^*(X)$. These rings coincide whenever the spaces considered are smooth, so we always use the notation $A^*(-)$ given that all spaces under our consideration are smooth orbits under linear algebraic group actions.}

The cohomology ring of $BS_4$ with integer coefficients is given by C. B. Thomas \cite{Th74}:
\begin{align*}
H^*(BS_4, \ZZ) = \ZZ[\alpha', \nu', \zeta'_1, \eta'] / &(2\alpha'=2\nu'=4\zeta'_1=3\eta'=0, \\
&\alpha'\nu'^{2j}=\alpha'^{j+1}(\zeta'_1+\alpha'^2)^j : j \geq 1)
\end{align*}
where the degrees of the generators are given by
\[
\mathrm{deg}(\alpha') = 2, \mathrm{deg}(\nu') = 3, \mathrm{deg}(\zeta'_1) = \mathrm{deg}(\eta') = 4.
\]
In this way, $H^*(BS_4, \ZZ)_{(2)}$ is generated by $\alpha', \nu', \zeta'_1$ and $H^*(BS_4, \ZZ)_{(3)}$ is generated by $\eta'$, with all other $p$-localizations of $H^*(BS_4, \ZZ)$ trivial. 

From Proposition 12.1 of \cite{To99}, the 2-torsion part of $A^i(BS_4)$ is isomorphic to $H^{2i}(BS_4, \ZZ/2)$ for each $i\geq 0$. Moreover, each Sylow 3-subgroup of $S_4$ is isomorphic to $\ZZ/3$, whose Chow ring is generated by a 3-torsion element of degree 1. Thus, using Lemma 10.1 of \cite{To99} as given above:
\[
A^*(BS_4)_{(3)} = A^*(B\ZZ/3) \cap H^*(BS_4, \ZZ)_{(3)}
\cong H^{2*}(BS_4, \ZZ)_{(3)} = \ZZ[\eta']/(3\eta').
\]
The result follows by denoting the images of $\alpha', \nu'^2, \zeta'_1, \eta'$ under the above isomorphisms by $\alpha, \nu, \zeta_1, \eta \in A^*(BS_4)$ respectively.
\end{proof}

Hence, using the projective bundle formula (for example, from Chapter 2 of \cite{AF24}), we obtain that
\[
A^*_{S_4}(\mathbb{P}^1) \cong
A^*_{S_4}[\zeta]/(\zeta^2+c_1^{S_4}(V)\zeta+c_2^{S_4}(V))
\]
where $\zeta = c_1^{S_4}(\mathcal{O}(1))$ is the $S_4$-equivariant first Chern class of a hyperplane in $\mathbb{P}^1$, and $V$ is the $S_4$-representation induced by the $S_4$-action on $\mathbb{P}^1$. In particular, $V$ is the unique 2-dimensional irreducible representation of $S_4$ as described in Remark \ref{S4-action-on-X}.

\begin{myprop}
\label{img-of-maps-f1-f2-from-X1-X2-to-P1}

The ring $A^*_{S_4}(\mathbb{P}^1-\FWall)$ is given by
\[
A^*_{S_4}(\mathbb{P}^1-\FWall) 
\cong A^*_{S_4}(\mathbb{P}^1) / (3\zeta, 3c_1^{S_4}(V)).
\]
\end{myprop}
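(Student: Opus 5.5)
We apply the two excision sequences \eqref{excision-seq-X1-in-P1} and \eqref{excision-seq-X2-in-P1} in turn. Since $F_1\cap F_2=\emptyset$, together they show that $A^*_{S_4}(\mathbb{P}^1-\FWall)$ is the quotient of $A^*_{S_4}(\mathbb{P}^1)$ by the sum of the images of the pushforwards $i_{1*}:A^*_{S_4}(F_1)\to A^*_{S_4}(\mathbb{P}^1)$ and $i_{2*}:A^*_{S_4}(F_2)\to A^*_{S_4}(\mathbb{P}^1)$. It therefore suffices to show that the ideal generated by these two images is exactly $(3\zeta,3c_1^{S_4}(V))$.

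Each $F_j$ is a single $S_4$-orbit, namely $S_4/D_8$ with base point $q_j$ and stabilizer \eqref{S4-stabilizer-X1-X2}. Hence $A^*_{S_4}(F_j)\cong A^*_{D_8}$, and $i_{j*}$ factors as
\[
A^*_{D_8}\xrightarrow{(q_j)_*}A^*_{D_8}(\mathbb{P}^1)\xrightarrow{\mathrm{tr}}A^*_{S_4}(\mathbb{P}^1),
\]
where the second map is the transfer (proper pushforward) along the degree-$3$ finite étale map $[\mathbb{P}^1/D_8]\to[\mathbb{P}^1/S_4]$.

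\textbf{Step 1: the $D_8$-equivariant pushforward.} Over $D_8$, the fixed point $q_j$ is cut out by an invariant linear form spanning a $D_8$-character $\chi_j$ of $V^\vee$. This gives
\[
(q_j)_*(1)=\zeta+c_1^{D_8}(\chi_j),
\]
and by the projection formula the image of $(q_j)_*$ is generated as an ideal by this class (this should be the content of Lemma \ref{excision-D8-equivar-Chow-X1-X2-from-P1}). Since the restriction $A^*_{S_4}(\mathbb{P}^1)\to A^*_{D_8}(\mathbb{P}^1)$ is surjective onto the relevant classes, it is enough to push forward classes of the form $\mathrm{res}(x)\cdot(\zeta+c_1(\chi_j))\cdot y$ with $y$ running over $D_8$-specific generators.

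\textbf{Step 2: transfer to $S_4$.} By the projection formula,
\[
\mathrm{tr}\bigl(\mathrm{res}(x)\,y\bigr)=x\cdot\mathrm{tr}(y),
\]
and moreover $\mathrm{tr}\circ\mathrm{res}=3$. The degree-$0$ generator gives $\mathrm{tr}(\zeta+c_1(\chi_j))=3\zeta+\mathrm{tr}(c_1(\chi_j))$. We identify $\mathrm{tr}(c_1(\chi_j))$ in $A^1_{S_4}=\ZZ/2\cdot\alpha$ via the sign-type character; the sum $F_1\cup F_2$, or the comparison of the two orbits, should produce $3c_1^{S_4}(V)$ (note $c_1^{S_4}(V)=\alpha$ or a multiple of it). Concretely, the union of the three points of $F_j$ is the zero divisor of an $S_4$-semi-invariant cubic section $\mathcal{O}(3)\otimes\det$-twist. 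Its class $3\zeta+c_1(\text{character})$ equals $i_{j*}(1)$, which gives the generators $3\zeta+\epsilon_j\alpha$. Taking both orbits together, with their different characters, yields $3\zeta$ and $3c_1(V)$.

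\textbf{Step 3: higher-degree images.} We must check that pushforwards of higher classes in $A^*_{D_8}$ contribute nothing new, i.e.\ that they lie in $(3\zeta,3c_1(V))$. This is the main obstacle, since $A^*_{D_8}$ has additional generators that are not restrictions from $S_4$ (the classes Lemma \ref{transfer-hyperpl-and-chern-classes-D8-to-S4-over-P1} is meant to handle). The plan is to write $A^*_{D_8}$ as generated over $\mathrm{res}(A^*_{S_4})$ by $1$ and a few Chern classes of $D_8$-representations. We then compute their transfers using the double coset formula $\mathrm{res}\circ\mathrm{tr}=\sum_{D_8gD_8}$, and check the 2-primary and 3-primary parts separately: 3-locally, $\mathrm{tr}\circ\mathrm{res}=3$ forces everything into multiples of $3$; 2-locally, $D_8$ is Sylow and the transfers of the extra classes should be multiples of $(\zeta+c_1(\chi))$ times $S_4$-classes, already accounted for.
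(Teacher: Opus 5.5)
\textbf{Verdict: there is a genuine gap.} Your skeleton is correct and matches the paper: excision for $F_1$ and $F_2$, then factoring $i_{j*}$ through $A^*_{D_8}(\mathbb{P}^1)$ followed by the transfer $\mathrm{tr}=\tilde\varphi_*$. The gap is Step 3, which is where the content of the proposition lies. There is also a smaller unjustified assertion in Step 2.

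\textbf{Step 2.} Your semi-invariant-cubic computation is a valid alternative to the paper's self-intersection computation. Since $F_j$ is $S_4$-stable, the line of cubics in $\Sym^3 V^\vee$ cutting it out is $S_4$-stable. Hence $i_{j*}(1)=3\zeta+c_1(\chi_j)$ for a character $\chi_j$ of $S_4$. However, you only assert that $\chi_1\neq\chi_2$, and this is essential. If the characters agreed, you would get a single generator $3\zeta+\epsilon\alpha$ and a different quotient. The check is short. The ratio of the two cubics is a degree-$3$ rational function with divisor $F_1-F_2$. If it were $S_4$-invariant, it would factor through the degree-$6$ quotient $\mathbb{P}^1\to\mathbb{P}^1/S_3$, which is impossible. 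Equivalently, the involution $\rho\mapsto-\rho$ fixing $q_1,q_2$ acts on the eigenlines $L_{q_1},L_{q_2}$ with opposite signs; this is Claim 2 in the paper's lemma.

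\textbf{Step 3.} You leave this as a plan hedged with ``should''. You are right that an argument is needed here. The paper disposes of it by claiming $\mathrm{res}^{S_4}_{D_8}\colon A^*_{S_4}\to A^*_{D_8}$ is surjective because $D_8$ is a Sylow $2$-subgroup. That claim is false already in degree $1$: $A^1_{S_4}\cong\mathbb{Z}/2$, while $A^1_{D_8}\cong(\mathbb{Z}/2)^2$. Your proposed route (generators of $A^*_{D_8}$ over $\mathrm{res}(A^*_{S_4})$ plus the double coset formula) is unnecessary, and you do not carry it out.

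\textbf{The missing idea.} The point class is itself a restriction. Because $V|_{D_8}\cong k_{triv}\oplus k_{D_4}$, we have $[q_j]^{D_8}=\xi+\epsilon_j c_{1,D_4}=\tilde\varphi^*(w_j)$, where $w_j:=\zeta+\epsilon_j c_1^{S_4}(V)$.
\begin{itemize}
\item For any $y\in A^*_{D_8}$, with $\pi\colon\mathbb{P}^1\to\mathrm{pt}$, the projection formula and base change give $i_{j*}(y)=\tilde\varphi_*(\pi^*y\cdot\tilde\varphi^*w_j)=\mathrm{tr}^{S_4}_{D_8}(y)\cdot w_j$.
\item If $\deg y>0$, then $y$, and hence $\mathrm{tr}(y)$, is killed by $|D_8|=8$. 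So $\mathrm{tr}(y)=9\,\mathrm{tr}(y)$, and therefore $i_{j*}(y)=3\,\mathrm{tr}(y)\cdot 3w_j=3\,\mathrm{tr}(y)\cdot i_{j*}(1)$.
\item If $\deg y=0$, then $i_{j*}(y)$ is an integer multiple of $i_{j*}(1)$.
\end{itemize}
Hence $\mathrm{im}(i_{j*})=(i_{j*}(1))$ exactly. The total ideal is $(3\zeta,\,3\zeta+3c_1^{S_4}(V))=(3\zeta,\,3c_1^{S_4}(V))$. With this two-line argument replacing Step 3, and the character check above, your proof is complete, and it also repairs the paper's argument.
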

\begin{proof}
Denote $q_1 = (1:0), q_2 = (0:1)$, so that
\[
F_i = S_4 \times_{D_8} \{ q_i \}
\]
for $i=1,2$ with $D_8 = \mathrm{Stab}_{S_4}(\{ q_i \})$ fixed. Consider the commutative diagrams of proper $S_4$-equivariant morphisms
\[
  \begin{tikzcd}
  F_i = S_4 \times_{D_8} \{ q_i \} \arrow{r}{\tilde{f_i}} \arrow{d}{\varphi_i} 
  & S_4\times_{D_8} \mathbb{P}^1 = \mathaccent\cdot\cup_{j=1}^3 \mathbb{P}^1 \arrow{d}{\tilde{\varphi}} \\
  F_i  \arrow{r}{f_i} & \mathbb{P}^1
  \end{tikzcd}
\]
for $i=1,2$, where $f_i: F_i \xhookrightarrow{} \mathbb{P}^1$ are inclusion maps, $\varphi_i$ are given by the natural $S_4$-equivariant map
\[
\varphi_i: (\sigma, q_i) \mapsto \sigma\cdot q_i,
\]
$\tilde{f_i}$ is induced by $f_i$ (equivalently, by the inclusion $\{ q_i \} \xhookrightarrow{} \mathbb{P}^1$):
\[
\tilde{f_i}: (\sigma, q_i) \mapsto (\sigma, q_i),
\]
and $\tilde{\varphi}$ is the 3-to-1 map from $S_4\times_{D_8} \mathbb{P}^1$ to $\mathbb{P}^1$ induced by $\varphi_i$ (both $\varphi_1, \varphi_2$ induce the same $\tilde{\varphi}$ here):
\[
\tilde{\varphi}: (\sigma, (x:y)) \mapsto \sigma\cdot(x:y).
\]
Thus, we get a commutative diagrams of pushforwards on the respective $S_4$-equivariant Chow rings:
\[
  \begin{tikzcd}
  A^*_{S_4}(S_4 \times_{D_8} \{ q_i \}) \cong A^*_{D_8} \arrow{r}{\tilde{f_i}_*} \arrow{d}{\varphi_{i*}} 
  & A^*_{S_4}(S_4 \times_{D_8} \mathbb{P}^1) \cong A^*_{D_8}(\mathbb{P}^1) \arrow{d}{\tilde{\varphi}_{*}} \\
  A^*_{S_4}(F_i)  \arrow{r}{f_{i*}} & A^*_{S_4}(\mathbb{P}^1)
  \end{tikzcd}
\]
for $i=1,2$. It is clear that $\varphi_i$ is homotopic to the identity map on $F_i$ for $i=1,2$ respectively, so $\varphi_{i*}$ is an isomorphism for $i=1,2$. Also, from the definition of $\tilde{\varphi}$, $\tilde{\varphi}_{*}$ is the transfer homomorphism from the $D_8$-equivariant Chow ring of $\mathbb{P}^1$ to the $S_4$-equivariant Chow ring of $\mathbb{P}^1$ for $i=1,2$. It follows that
\[
\mathrm{im}(f_{i*}) = \mathrm{im}(\tilde{\varphi}_{*} \circ \tilde{f_{i}}_*)
\]
for $i=1,2$. 

Now, by functoriality, we know that the pullbacks $f_i^*$, $i=1,2$, restrict to the restriction homomorphism 
\[
\mathrm{res}^{S_4}_{D_8}: A^*_{S_4} \rightarrow A^*_{D_8}
\]
on the subring $A^*_{S_4} \subset A^*_{S_4}(\mathbb{P}^1)$. From Chapter 13 of \cite{To14}, $A^*_{D_8}$ is generated by Chern classes of $D_8$-representations. Since $D_8$ is a Sylow 2-subgroup of $S_4$, $\mathrm{res}^{S_4}_{D_8}$ is surjective via restricting $S_4$-representations to $D_8$-representations:
\[
\mathrm{res}^{S_4}_{D_8}(c_i^{S_4}(W)) = c_i^{D_8}(W \downarrow^{S_4}_{D_8})
\]
for any $W$ an $S_4$-representation.

Thus, for any $u\in A^*_{D_8}$ of positive degree, there exists some $u'\in A^*_{S_4}$ such that $f_i^*(u')=u$. By the projection formula (for example, from Chapter 3 of \cite{AF24}):
\[
f_{i*}(u) = f_{i*}(f_i^*(u') \cdot 1) = u' \cdot f_{i*}(1)
\]
for $i=1,2$. It follows that $f_{i*}(1)$ generates the image of $f_{i*}$ for $i=1,2$ as ideals in $A^*_{S_4}(\mathbb{P}^1)$.

Finally, we observe that $f_{1*}$ is the pushforward map on the left-hand side of the exact sequence \ref{excision-seq-X1-in-P1} by definition, so
\[
A^*_{S_4}(\mathbb{P}^1 - F_1) \cong A^*_{S_4}(\mathbb{P}^1) / \mathrm{im}(f_{1*}) = A^*_{S_4}(\mathbb{P}^1) / (f_{1*}(1))
\]
by exactness. For the pushforward map on the left-hand side of the exact sequence \ref{excision-seq-X2-in-P1}, note that we have a fiber square of $S_4$-equivariant morphisms
\[
  \begin{tikzcd}
  F_2 = (\mathbb{P}^1-F_1) \times_{\mathbb{P}^1} F_2 \arrow{r}{id_{F_2}} \arrow{d}{f} 
  & F_2 \arrow{d}{f_2} \\
  \mathbb{P}^1-F_1 \arrow{r}{g} & \mathbb{P}^1
  \end{tikzcd}
\]
where $f,g$ are inclusion maps with $f$ proper, given that $F_2$ is closed in $\mathbb{P}^1-F_1$. By definition, the pushforward of $f$,
\[
f_*: A^*_{S_4}(F_2) \rightarrow A^*_{S_4}(\mathbb{P}^1-F_1)
\]
is the pushforward on the left-hand side of \ref{excision-seq-X2-in-P1}. Then, by naturality of Gysin homomorphisms (Theorem 6.2 of \cite{Fu84}, cf. Chapter 3 of \cite{AF24}):
\[
f_*(u) = f_*(id_{F_2}^*(u)) = g^*(f_{2*}(u))
\]
for any $u \in A^*_{S_4}(F_2)$. It follows that
\begin{align*}
A^*_{S_4}(\mathbb{P}^1 - (F_1\cup F_2)) 
&\cong A^*_{S_4}(\mathbb{P}^1 -F_1) / \mathrm{im}(f_*) \\
&\cong A^*_{S_4}(\mathbb{P}^1 - F_1) / (g^*(f_{2*}(1))), \\
&\cong (A^*_{S_4}(\mathbb{P}^1)/ (f_{1*}(1)) / (g^*(f_{2*}(1))),
\end{align*}
where the second-last line follows from the exactness of \ref{excision-seq-X2-in-P1} implying the surjectivity of $g^*$, so that $\mathrm{im}(f_*) = g^*(\mathrm{im}(f_{2*})) = (g^*(f_{2*}(1)))$.

The Proposition then follows from Lemma \ref{excision-D8-equivar-Chow-X1-X2-from-P1} and Lemma \ref{transfer-hyperpl-and-chern-classes-D8-to-S4-over-P1}, which give
\begin{align*}
&f_{1*}(1) = 3\zeta \in A^*_{S_4}(\mathbb{P}^1), \\
&g^*(f_{2*}(1)) = 3c_1^{S_4}(V) \in A^*_{S_4}(\mathbb{P}^1 - F_1).
\end{align*}
\end{proof}

\begin{mylemma}
\label{excision-D8-equivar-Chow-X1-X2-from-P1}

Fix $D_4$ to be as in Theorem \ref{struct-finite-subgps-PGL2} and fix $D_8 = \mathrm{Stab}_{S_4}(q_i)$ to be as in Equation \ref{S4-stabilizer-X1-X2} for $i=1,2$. Denote the $D_8$-equivariant first Chern class of a hyperplane in $\mathbb{P}^1$ by $\xi = c_1^{D_8}(\mathcal{O}_{\mathbb{P}^1}(1))$, and let $\tilde{f_{1}}_*, \tilde{f_{2}}_*$ be as in Proposition \ref{img-of-maps-f1-f2-from-X1-X2-to-P1}. Then: 
\begin{enumerate}
\item $[\{ q_1 \}]^{D_8} = \tilde{f_{1}}_*(1) = \xi$.
\item $[\{ q_2 \}]^{D_8} = \tilde{f_{2}}_*(1) = \xi + c_{1, D_4}$ where $c_{1,D_4}:=c_1^{D_8}(k_{D_4})$ is the $D_8$-equivariant first Chern class of $k_{D_4}$, the unique 1-dimensional irreducible representation of $D_8$ with kernel $D_4 \leq D_8$.
\end{enumerate}
\end{mylemma}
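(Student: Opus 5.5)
The plan is to compute both classes with the standard formula for the equivariant class of a fixed point on a projectivized representation, after decomposing $V$ restricted to $D_8$ into characters.

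\textbf{Restricting $V$ to $D_8$.} By Proposition \ref{S4-action-on-X}, the kernel of $\pi$ is $D_4$. So $D_8$ acts on $\mathbb{P}^1 = \mathbb{P}(V)$ through the quotient $D_8/D_4 \cong \ZZ/2$, generated by the image of $\tau := \sigma_3^{-1}\sigma_2$.

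\begin{itemize}
\item $\tau$ fixes both $q_1 = (1:0)$ and $q_2 = (0:1)$, by \eqref{S4-stabilizer-X1-X2}. Hence a lift of $\tau$ to $V$ is diagonal in the basis corresponding to the two homogeneous coordinates.
\item The same trace argument as in Proposition \ref{S4-action-on-X} applies: $\tau$ is an involution of $S_4$ acting nontrivially on $\mathbb{P}^1$, so its trace on the $2$-dimensional irreducible representation is $0$. Its eigenvalues on $V$ are therefore $+1$ and $-1$.
\item Consequently $V\downarrow^{S_4}_{D_8} \cong L_1 \oplus L_2$, with $q_i = \mathbb{P}(L_i)$. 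One of the $L_i$ is the trivial character $k$ and the other is $k_{D_4}$; the latter is the nontrivial character of $D_8$ pulled back from $D_8/D_4$.
\end{itemize}

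\textbf{Computing the class of a fixed point.} Suppose $q = \mathbb{P}(L) \in \mathbb{P}(V)$ is the point of a $G$-stable line $L \subset V$. Then $q$ is the zero scheme of the composite $\mathcal{O}(-1) \hookrightarrow V\otimes\mathcal{O} \to (V/L)\otimes\mathcal{O}$. This is a $G$-equivariant section of the line bundle $\mathcal{O}(1)\otimes (V/L)$ that vanishes transversally at $q$. Hence
\[
[q]^{G} = c_1^{G}(\mathcal{O}(1)\otimes V/L) = \xi + c_1^{G}(V/L).
\]
Applying this with $G = D_8$ gives
\[
[q_1]^{D_8} = \xi + c_1^{D_8}(L_2), \qquad [q_2]^{D_8} = \xi + c_1^{D_8}(L_1).
\]
The lemma then reduces to showing that $L_2$ is trivial and $L_1 \cong k_{D_4}$.

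\textbf{Identifying the eigenlines (main obstacle).} This identification, together with the sign convention for $\mathcal{O}(1)$, is where I expect the difficulty. The $S_4$-action on $\mathbb{P}^1$ is only given projectively in Proposition \ref{S4-action-on-X}. So the first step is to fix the actual linear lift $V$ (the irreducible representation) rather than an arbitrary scalar multiple on each generator. Changing a lift by a character $\chi$ of $D_8$ would shift both eigencharacters by $\chi$ and change the answer.

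\begin{itemize}
\item I would write explicit $2\times 2$ matrices $M_{\sigma_2}, M_{\sigma_3}$ realizing the M\"obius maps $\rho \mapsto -\tfrac{\rho-3}{\rho+1}$ and $\rho \mapsto \tfrac{\rho-3}{\rho+1}$.
\item I would normalize the scalars so that $M_{\sigma_3}^3 = I$, $M_{\sigma_2}^2 = I$, and the $D_4$ generators act by the identity. This is forced, since $D_4$ lies in the kernel of $V$.
\item I would then compute $M_\tau = M_{\sigma_3}^{-1}M_{\sigma_2}$ and read off its eigenvalue on the coordinate line of $q_1$ and on that of $q_2$.
\end{itemize}

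The expected outcome is that $\tau$ acts by $-1$ on $L_1$ and by $+1$ on $L_2$. That gives $c_1^{D_8}(L_2) = 0$ and $c_1^{D_8}(L_1) = c_{1,D_4}$, which are exactly statements (1) and (2).

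If the computation instead gave the opposite assignment, I would recheck the dual convention $\mathbb{P}(V)$ versus $\mathbb{P}(V^\vee)$. Since $c_{1,D_4}$ is $2$-torsion, the dual convention only affects which point carries the extra term, not its sign.
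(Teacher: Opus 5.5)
Your reduction $[q]^{D_8}=\xi+c_1^{D_8}(V/L)$ is correct, and it is a cleaner route than the paper's. The paper restricts $\xi$ to the two fixed points (its Claim 2) and computes $\tilde{f_i}^*\tilde{f_i}_*(1)$ by the self-intersection formula. It then asserts that $[q_i]^{D_8}$ is a combination of $\xi$ and $c_{1,D_4}$ and solves for the coefficients. That last step tacitly has to exclude the other degree-one class $c_1^{D_8}(k_{\langle\sigma_4\rangle})$. It also has to use the non-equivariant degree, because restriction to $q_1,q_2$ only detects the $\xi$-coefficient modulo $2$. Your observation that $\mathbb{P}(L)$ is the transverse zero locus of an equivariant section of $\mathcal{O}(1)\otimes V/L$ gives the class in one step. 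Both arguments, however, hinge on the same input: which eigenline of $D_8$ carries $k_{D_4}$.

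That input is where your proposal has a genuine gap. The computation you outline cannot decide it, because it is a choice of $S_4$-linearization of $\mathcal{O}(1)$, not a fact. Your normalization $M_{\sigma_3}^3=M_{\sigma_2}^2=I$ leaves the sign of $M_{\sigma_2}$ free, since the remaining relation $(M_{\sigma_3}M_{\sigma_2})^2=I$ does not see it. The two surviving lifts differ by the sign character. Since $V\otimes\mathrm{sgn}\cong V$, both are linearizations by ``the'' two-dimensional irreducible representation. But $\mathrm{sgn}|_{D_8}=k_{D_4}$: $D_4$ consists of even permutations, and $D_8\setminus D_4$ maps to a transposition of $S_4/D_4\cong S_3$. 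So the two lifts swap the eigencharacters.

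Concretely, take $M_{\sigma_3}=-\tfrac12\bigl(\begin{smallmatrix}1&-3\\1&1\end{smallmatrix}\bigr)$ and $M_{\sigma_2}=\tfrac{\epsilon}{2}\bigl(\begin{smallmatrix}-1&3\\1&1\end{smallmatrix}\bigr)$. Then $M_{\sigma_3}M_{\sigma_2}=\epsilon\,\mathrm{diag}(1,-1)$. For $\epsilon=+1$ your formula yields $[q_1]^{D_8}=\xi+c_{1,D_4}$ and $[q_2]^{D_8}=\xi$, the reverse of the lemma. The classes $[q_i]^{D_8}$ are intrinsic; what changes is $\xi$, by $c_{1,D_4}$. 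The paper simply stipulates in Claim 2 that the involution sends $(1,0)\mapsto(-1,0)$ and fixes $(0,1)$, i.e.\ it takes $\epsilon=-1$.

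Your fallback of ``rechecking the dual convention'' is again just a change of linearization, by $\det V=\mathrm{sgn}$. It exhibits the ambiguity rather than resolving it. State the convention explicitly; then your section argument gives both formulas at once. Downstream only the ideal $(3\zeta,\,3\zeta+3c_1^{S_4}(V))$ is used, and it is insensitive to the swap.

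Also, your step ``$M_\tau$ is diagonal'' fails as written with $\tau=\sigma_3^{-1}\sigma_2$. Using the M\"obius formulas of Proposition \ref{S4-action-on-X}, $M_{\sigma_3}^{-1}M_{\sigma_2}$ is proportional to $\bigl(\begin{smallmatrix}1&3\\1&-1\end{smallmatrix}\bigr)$. This realizes $\rho\mapsto\frac{\rho+3}{\rho-1}$, whose fixed points are $3$ and $-1$. The involution fixing $(1:0)$ and $(0:1)$ is $\sigma_3\sigma_2$, which acts by $\rho\mapsto-\rho$ according to the paper's own Equations \ref{action-outer-autom-gp-stabilizer-on-pencils}. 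The misidentification is inherited from Equation \ref{S4-stabilizer-X1-X2}.
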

\begin{proof}
Our goal is to calculate the classes $[\{ q_i \}]^{D_8} \in A^*_{D_8}(\mathbb{P}^1)$, where
\[
[\{ q_i \}]^{D_8} = \tilde{f_{i}}_*(1)
\]
from the definition of $\tilde{f_{i}}_*$ for $i=1,2$ in Proposition \ref{img-of-maps-f1-f2-from-X1-X2-to-P1}. By the projective bundle formula \cite{AF24}:
\[
A^*_{D_8}(\mathbb{P}^1) \cong A^*_{D_8}[\xi]/(\xi^2 + c_1^{D_8}(V \downarrow^{S_4}_{D_8})\xi + c_2^{D_8}(V \downarrow^{S_4}_{D_8})),
\]
where $V$ is the unique 2-dimensional irreducible representation of $S_4$. Since all of our calculations are in $A^*_{D_8}(\mathbb{P}^1)$, denote $V = V \downarrow^{S_4}_{D_8}$. To establish the results, we will show:
\begin{description}
\item[Claim 1] $ch^{D_8}(V) = 1 + c_{1,D_4} \in A^*_{D_8}$;
\item[Claim 2] $\tilde{f_1}^*(\xi) = - c_{1,D_4}$ and $\tilde{f_2}^*(\xi) = 0$.
\end{description} 
Then, we will use the self-intersection formula
\[
\tilde{f_i}^*\circ \tilde{f_{i}}_*(1) = c_{top}^{D_8}(T_{q_i} \mathbb{P}^1)
\]
to calculate $\tilde{f_{i}}_*(1)$ for $i=1,2$.

\begin{description}
\item [Proof of Claim 1] We may establish this claim using the character table of $D_8$ over a field of characteristic not equal to 2. Fix $D_4, D_8$ as described. Denote the character of $k_{D_4}$ by $\chi_{D_4}$ and the character of $V=V \downarrow^{S_4}_{D_8}$ by $\chi_V$ (where both $k_{D_4}$ and $V$ are considered as $D_8$-representations). Then, by computing the inner product of $\chi_V$ with each character in the character table of $D_8$, we obtain that
\[
\chi_V = \chi_{triv} + \chi_{D_4},
\]
where $\chi_{triv}$ is the character of the trivial representation. It follows that
\[
ch^{D_8}(V) = ch^{D_8}(k_{triv}\oplus k_{D_4}) = 1 + c_{1,D_4},
\]
as required.

\item [Proof of Claim 2] By functoriality,
\[
\tilde{f_i}^*(\xi) = c_1^{D_8}(\tilde{f_i}^* \mathcal{O}_{\mathbb{P}^1}(1))
\]
for $i=1,2$. Since $\tilde{f_i}: S_4 \times_{D_8} \{ q_i \} \rightarrow S_4 \times_{D_8} \mathbb{P}^1$ is induced by the inclusion map $\{ q_i \} \xhookrightarrow{} \mathbb{P}^1$ for $i=1,2$, $\tilde{f_i}^*$ restricts $S_4$-equivariant (therefore $D_8$-equivariant) vector bundles over $\mathbb{P}^1$ to vector bundles over $q_i$:
\begin{align*}
\tilde{f_i}^* \mathcal{O}_{\mathbb{P}^1}(1)
&= \mathcal{O}_{\mathbb{P}^1}(1) |_{q_i} 
= L_{q_i}^*,
\end{align*}
the dual space of the line spanned by $q_i$. Thus,
\[
\tilde{f_1}^*(\xi) = c_1^{D_8}(L_{q_1}^*) = - c_1^{D_8}(L_{q_1}) = -c_{1,D_4}
\]
given that $D_8 = \langle D_4, \sigma_3^{-1}\sigma_2 \rangle$ acts on the line spanned by $q_1=(1,0)$ (as a vector in $k^2$ here) via 
\[
\sigma_3^{-1}\sigma_2\cdot q_1 = (-1,0), D_4\cdot q_1 = q_1,
\]
i.e., via the 1-dimensional representation $k_{D_4}$. Similarly,
\[
\tilde{f_2}^*(\xi) = c_1^{D_8}(L_{q_2}^*) = - c_1^{D_8}(L_{q_2}) = 0
\]
since 
\[
\sigma_3^{-1}\sigma_2 \cdot q_2 = q_2, D_4 \cdot q_2 = q_2.
\]
This concludes the proof of Claim 2.
\end{description}

Next, using the fact that the tangent bundle of $\mathbb{P}^1$ is given by
\[
\mathcal{T}_{\mathbb{P}^1} \cong \mathcal{H}om(\mathcal{S}, \mathcal{Q})
\]
where $\mathcal{Q}$ is the universal quotient bundle, we may calculate $\tilde{f_i}^*\circ \tilde{f_{i}}_*(1)$ for $i=1,2$ using the self-intersection formula:
\begin{align*}
\tilde{f_i}^*\circ \tilde{f_{i}}_*(1) &= c_{top}^{D_8}(T_{q_i} \mathbb{P}^1) \\
&= c_1^{D_8}(\mathrm{Hom}_k(\mathcal{S}|_{q_i}, \mathcal{Q}|_{q_i})), \\
&= c_1^{D_8}(L_{q_i}^* \otimes (V/L_{q_i})) \\
&= c_1^{D_8}(L_{q_i}^*) + c_1^{D_8}(V / L_{q_i}).
\end{align*}
Hence,
\begin{align}
\label{pullback-class-of-q1-to-point}
\tilde{f_1}^*([q_1]^{D_8}) &= \tilde{f_1}^*\circ \tilde{f_{1}}_*(1) = -c_{1,D_4}
\end{align}
since
\[
ch^{D_8}(V/ L_{q_1}) = \frac{ch^{D_8}(V)}{ch^{D_8}(L_{q_1})} = \frac{1+c_{1,D_4}}{1+c_{1,D_4}} = 1.
\]
Similarly,
\begin{align}
\label{pullback-class-of-q2-to-point}
\tilde{f_2}^*([q_2]^{D_8}) &= \tilde{f_2}^*\circ \tilde{f_{2}}_*(1) = c_1^{D_8}(V / L_{q_2}) = c_{1,D_4}
\end{align}
since
\[
ch^{D_8}(V / L_{q_2}) = \frac{ch^{D_8}(V)}{ch^{D_8}(L_{q_2})} = \frac{1+c_{1,D_4}}{1} = 1+c_{1,D_4}.
\]

Now, by naturality, $\{q_1\} \cap \{q_2\} = \varnothing$ implies that
\begin{align}
\label{pullback-q1-q2-point-naturality}
\tilde{f_1}^*([q_2]^{D_8}) &= 0 = \tilde{f_2}^*([q_1]^{D_8}).
\end{align}
Hence, Claim 2 implies that $[q_1]^{D_8}, [q_2]^{D_8} \in A^*_{D_8}(\mathbb{P}^1)$ are both linear combinations of $\xi$ and $c_{1,D_4}$, as $f_1^*, f_2^*$ restrict to the identity map on $A^*_{D_8}$ by functoriality (as in the proof of Proposition \ref{img-of-maps-f1-f2-from-X1-X2-to-P1}). This means we may write
\begin{align*}
[q_1]^{D_8} &= a_1\xi + a_2 c_{1,D_4}, \\
[q_2]^{D_8} &= b_1\xi + b_2 c_{1,D_4},
\end{align*}
so solving for the coefficients via by Equations \ref{pullback-class-of-q1-to-point}, \ref{pullback-class-of-q2-to-point} and \ref{pullback-q1-q2-point-naturality}, we have
\[
[q_1]^{D_8} = \xi, [q_2]^{D_8} = \xi + c_{1,D_4},
\]
as claimed.
\end{proof}

\begin{mylemma}
\label{transfer-hyperpl-and-chern-classes-D8-to-S4-over-P1}

Let
\[
\tilde{\varphi}_*: A^*_{S_4}(S_4 \times_{D_8} \mathbb{P}^1) \rightarrow  A^*_{S_4}(\mathbb{P}^1), 
\]
be the pushforward map induced by
\[
\tilde{\varphi_i}: S_4 \times_{D_8} \mathbb{P}^1 \rightarrow \mathbb{P}^1, (\sigma, (x:y)) \mapsto \sigma\cdot (x:y)
\]
as in Proposition \ref{img-of-maps-f1-f2-from-X1-X2-to-P1} for $i=1,2$. Then:
\begin{enumerate}
\item $\tilde{\varphi}_*(\xi) = 3\zeta$;
\item $\tilde{\varphi}_*(c_{1,D_4}) = 3\zeta + 3c_1^{S_4}(V)$.
\end{enumerate}
\end{mylemma}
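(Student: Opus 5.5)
The plan is to compute both transfers with the projection formula for the finite $S_4$-equivariant map $\tilde{\varphi}$, after writing $\xi$ and $c_{1,D_4}$ as pullbacks of classes on $\mathbb{P}^1$. Since $\mathbb{P}^1$ already carries an $S_4$-action, the map $(\sigma,x)\mapsto(\sigma D_8,\sigma\cdot x)$ gives an $S_4$-equivariant isomorphism
\[
S_4\times_{D_8}\mathbb{P}^1\cong (S_4/D_8)\times\mathbb{P}^1 .
\]
Under this isomorphism $\tilde{\varphi}$ becomes the projection to the second factor. It is therefore finite and \'etale of degree $|S_4/D_8|=3$, and
\[
\tilde{\varphi}_*(1)=3\in A^0_{S_4}(\mathbb{P}^1).
\]

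For item (1), the first step is to check that under $A^*_{S_4}(S_4\times_{D_8}\mathbb{P}^1)\cong A^*_{D_8}(\mathbb{P}^1)$ the class $\xi=c_1^{D_8}(\mathcal{O}(1))$ equals $\tilde{\varphi}^*\zeta$. This holds because $\mathcal{O}(1)$ is $S_4$-linearized through $V$, and its restriction to $D_8$ is the linearization used to define $\xi$; hence $\tilde{\varphi}^*\mathcal{O}(1)$ corresponds to $\mathcal{O}(1)$ with the restricted $D_8$-linearization. The projection formula then gives
\[
\tilde{\varphi}_*(\xi)=\tilde{\varphi}_*(\tilde{\varphi}^*\zeta\cdot 1)=\zeta\cdot\tilde{\varphi}_*(1)=3\zeta .
\]

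For item (2), I would again write $c_{1,D_4}$ as a pullback plus a correction, and then apply the same formula. By Claim 1 in the proof of Lemma \ref{excision-D8-equivar-Chow-X1-X2-from-P1}, $\mathrm{res}^{S_4}_{D_8}c_1^{S_4}(V)=c_1^{D_8}(V\downarrow^{S_4}_{D_8})=c_{1,D_4}$. The constant part of $c_{1,D_4}$ is therefore $\tilde{\varphi}^*c_1^{S_4}(V)$, and it contributes $3c_1^{S_4}(V)$ after transfer. To reach the stated value $3\zeta+3c_1^{S_4}(V)$, the $D_8$-class being transferred must differ from this pullback by $\xi$. So I would next compare the lift of $c_{1,D_4}$ to $A^*_{S_4}(S_4\times_{D_8}\mathbb{P}^1)$ with the point classes of Lemma \ref{excision-D8-equivar-Chow-X1-X2-from-P1}. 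Lemma \ref{excision-D8-equivar-Chow-X1-X2-from-P1} gives $[\{q_2\}]^{D_8}=\xi+c_{1,D_4}$, and $\tilde{\varphi}_*[\{q_2\}]^{D_8}=f_{2*}(1)$ by the diagram in Proposition \ref{img-of-maps-f1-f2-from-X1-X2-to-P1}. Additivity of $\tilde{\varphi}_*$ together with item (1) gives
\[
\tilde{\varphi}_*(\xi+c_{1,D_4})=3\zeta+\tilde{\varphi}_*(c_{1,D_4}).
\]
So the second formula should be established in the form needed in Proposition \ref{img-of-maps-f1-f2-from-X1-X2-to-P1}, namely as the transfer of the class $\xi+c_{1,D_4}$ attached to $q_2$.

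The main obstacle is exactly this identification in item (2), and I am not confident it can be made to give the statement as worded. Since $A^*_{S_4}(\mathbb{P}^1)$ is free over $A^*_{S_4}$ on $1,\zeta$, the extra $3\zeta$ cannot be a torsion artifact. The projection-formula computation above gives $\tilde{\varphi}_*(c_{1,D_4})=3c_1^{S_4}(V)$ unless the identification $A^*_{S_4}(S_4\times_{D_8}\mathbb{P}^1)\cong A^*_{D_8}(\mathbb{P}^1)$ twists the hyperplane class. Such a twist could come from the choice of $D_8$-linearization of $\mathcal{O}(1)$ on each of the three components. I would first recheck that linearization and the sign in Claim 2, where $\tilde{f_1}^*\xi=-c_{1,D_4}$. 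If no twist appears, then item (2) as stated is really the value of $\tilde{\varphi}_*([\{q_2\}]^{D_8})$. In either case, the consequence used later, $g^*(f_{2*}(1))=3c_1^{S_4}(V)$ after killing $3\zeta=f_{1*}(1)$, is unaffected.
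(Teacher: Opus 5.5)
Your approach matches the paper's: identify $\tilde{\varphi}^*\zeta=\xi$ and $\tilde{\varphi}^*c_1^{S_4}(V)=c_{1,D_4}$ (both are just restriction to $D_8$), use $\tilde{\varphi}_*(1)=3$, and apply the projection formula. Your suspicion about item (2) is also right. The paper's own proof ends with $\tilde{\varphi}_*(c_{1,D_4})=3c_1^{S_4}(V)$ and only afterwards uses $\tilde{\varphi}_*(\xi+c_{1,D_4})=3\zeta+3c_1^{S_4}(V)$, so the printed item (2) is a misprint for the latter. As literally stated it is false, because $3\zeta\neq 0$ in the free $A^*_{S_4}$-module $A^*_{S_4}(\mathbb{P}^1)$.

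You can drop the hedge. Since $c_{1,D_4}$ is pulled back from $A^*_{S_4}$, no choice of linearization of $\mathcal{O}(1)$ can affect $\tilde{\varphi}_*(c_{1,D_4})$. The sign in Claim 2 is likewise irrelevant here, and immaterial anyway since $c_{1,D_4}$ is $2$-torsion.
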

\begin{proof}
Similarly as in the proof of Lemma \ref{excision-D8-equivar-Chow-X1-X2-from-P1}, we will first calculate pullbacks of classes under
\[
\tilde{\varphi}^*: A^*_{S_4}(\mathbb{P}^1) \rightarrow A^*_{S_4}(S_4 \times_{D_8} \mathbb{P}^1) \cong A^*_{D_8}(\mathbb{P}^1)
\]
which is the restriction morphism from $A^*_{S_4}(\mathbb{P}^1)$ to $A^*_{D_8}(\mathbb{P}^1)$ by the definition of $\tilde{\varphi}$ in Proposition \ref{img-of-maps-f1-f2-from-X1-X2-to-P1}. Then, we will use the projection formula to find the classes $\tilde{\varphi}_*(\xi), \tilde{\varphi}_*(c_{1,D_4})$.

Since $\tilde{\varphi}^*$ restricts to the restriction homomorphism $\mathrm{res}^{S_4}_{D_8}: A^*_{S_4} \rightarrow A^*_{D_8}$ on the subring $A^*_{S_4} \subset A^*_{S_4}(\mathbb{P}^1)$, we have that $\tilde{\varphi}^*(c_j^{S_4}(W)) = c_j^{D_8}(W \downarrow^{S_4}_{D_8})$ by functoriality. In Lemma \ref{excision-D8-equivar-Chow-X1-X2-from-P1}, we showed that
\[
ch^{D_8}(V^{S_4}_{D_8}) = 1 + c_{1,D_4}
\]
with $V$ being the unique 2-dimensional irreducible representation of $S_4$. Thus,
\[
\tilde{\varphi}^*(c_1^{S_4}(V)) = c_{1,D_4}.
\]

Next, notice that
\[
\tilde{\varphi}^*(\zeta) = \tilde{\varphi}^*(c_1^{S_4}(\mathcal{O}(1)))
= c_1^{S_4}(\tilde{\varphi}^*\mathcal{O}(1)) \\
= c_1^{S_4}(S_4\times_{D_8} \mathcal{O}(1))
\]
from the definition of $\tilde{\varphi_i}: S_4\times_{D_8} \mathbb{P}^1 \rightarrow \mathbb{P}^1$. Thus,
\[
\tilde{\varphi}^*(\zeta) = c_1^{S_4}(S_4\times_{D_8} \mathcal{O}(1))
= c_1^{D_8}(\mathcal{O}(1)) = \xi
\]
from the natural identification
\[
A^*_{S_4}(S_4\times_{D_8} \mathbb{P}^1) \cong A^*_{D_8}(\mathbb{P}^1).
\]

Now, since $\tilde{\varphi}$ is proper and $\tilde{\varphi}(S_4\times_{D_8} \mathbb{P}^1) = \mathbb{P}^1$, the image of the fundamental class under $\tilde{\varphi}_*$ is
\[
\tilde{\varphi}_*(1) = \mathrm{deg}(\tilde{\varphi}) \cdot [\mathbb{P}^1]^{S_4}
= 3 \in A^*_{S_4}(\mathbb{P}^1).
\]

Therefore, by the projection formula:
\begin{align*}
\tilde{\varphi}_*(\xi) &= \tilde{\varphi}_*(\tilde{\varphi}^*(\zeta) \cdot 1)
= \zeta \cdot \tilde{\varphi}_*(1) = 3\zeta, \\
\tilde{\varphi}_*(c_{1,D_4}) &= \tilde{\varphi}_*(\tilde{\varphi}^*(c_1^{S_4}(V)) \cdot 1)
= c_1^{S_4}(V) \cdot \tilde{\varphi}_*(1) = 3c_1^{S_4}(V),
\end{align*}
as claimed.
\end{proof}

Lemma \ref{excision-D8-equivar-Chow-X1-X2-from-P1} and Lemma \ref{transfer-hyperpl-and-chern-classes-D8-to-S4-over-P1} give
\begin{align*}
f_{1*}(1) &= \tilde{\varphi}_*\circ \tilde{f}_{1*}(1) = \tilde{\varphi}_*(\xi) = 3\zeta,\\
f_{2*}(1) &= \tilde{\varphi}_*\circ \tilde{f}_{2*}(1) = \tilde{\varphi}_*(\xi+c_{1,D_4}) = 3\zeta+3c_1^{S_4}(V),
\end{align*}
so this finishes the proof of Proposition \ref{img-of-maps-f1-f2-from-X1-X2-to-P1}.

Finally, using results from \cite{Th74} and \cite{Ev65}, we may eliminate some of the generators of $A^*_{\PGL_2}(\Gr(2,4)^s) \cong A^*_{S_4}(\mathbb{P}^1-\FWall)$ coming from the subring $A^*_{S_4} \subset A^*_{S_4}(\mathbb{P}^1-\FWall)$.
\begin{myprop}
\label{equivar-Chow-ring-of-coll-of-stable-orbits}

The $\PGL_2$-equivariant Chow ring of $\Gr(2,4)^s$ is given by
\begin{align*}
A^*_{\PGL_2}(\Gr(2,4)^s)
&\cong A^*_{S_4}(\mathbb{P}^1-\FWall) \\
&\cong A^*_{S_4}[\zeta] / (\zeta^2+c_1^{S_4}(V)\zeta+c_2^{S_4}(V), 3\zeta, 3c_1^{S_4}(V)) \\
&\cong \ZZ[\alpha, \zeta_1, \zeta]/(2\alpha=4\zeta_1=3\zeta=0, \alpha^2=0)
\end{align*}
where
\[
\alpha\in A^1_{S_4}, \zeta_1 \in A^2_{S_4},
\]
are generators of $A^*_{S_4}$ given as in Proposition \ref{S4-equivar-Chow-ring-of-a-point}, and
\[
\zeta= c_1^{S_4}(\mathcal{O}(1)) \in A^1_{S_4}(\mathbb{P}^1-\FWall)
\]
is the (pullback of) $S_4$-equivariant first Chern class of a hyperplane in $\mathbb{P}^1$.
\end{myprop}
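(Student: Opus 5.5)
The first two isomorphisms are already in hand: Corollary \ref{struct-stable-locus-quotient-stacks} gives the identification with $A^*_{S_4}(\mathbb{P}^1-\FWall)$, and Proposition \ref{img-of-maps-f1-f2-from-X1-X2-to-P1} together with the projective bundle formula gives the presentation with relations $\zeta^2+c_1^{S_4}(V)\zeta+c_2^{S_4}(V)$, $3\zeta$ and $3c_1^{S_4}(V)$. So the work is the third isomorphism. Here I plan to eliminate the generators $\nu,\eta$ and the classes $c_1^{S_4}(V),c_2^{S_4}(V)$ using the explicit structure of $A^*_{S_4}$ from Lemma \ref{S4-equivar-Chow-ring-of-a-point}.

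\textbf{Step 1: identify $c_1^{S_4}(V)$ and $c_2^{S_4}(V)$ in $A^*_{S_4}$.}

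For $c_1$: $A^1_{S_4}\cong \mathrm{Hom}(S_4,k^*)\cong \ZZ/2$, generated by $\alpha = c_1(\mathrm{sgn})$. Since $\det V=\mathrm{sgn}$, we get $c_1^{S_4}(V)=\alpha$. Then $3c_1^{S_4}(V)=3\alpha=\alpha$, so the relation $3c_1^{S_4}(V)=0$ forces $\alpha=0$ in the quotient. I will reconcile this against the stated target presentation, which keeps $\alpha$ with $\alpha^2=0$; possibly $\alpha$ survives in the target for a different reason, and the 2-torsion bookkeeping has to be done carefully.

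For $c_2$: I will compute $c_2^{S_4}(V)$ by restricting to Sylow subgroups. By Totaro's Lemma 10.1 of \cite{To99}, the restrictions to $D_8$ and to $\ZZ/3$ detect classes after localization. Restricted to $D_8$, Claim 1 of Lemma \ref{excision-D8-equivar-Chow-X1-X2-from-P1} gives $V\downarrow = k_{triv}\oplus k_{D_4}$, so $c_2=0$ there. Restricted to $\ZZ/3$, $V$ becomes $\omega\oplus\omega^2$, so $c_2=-x^2$, a generator; this should identify $c_2^{S_4}(V)$ with $\pm\eta$ plus possibly 2-primary terms.

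\textbf{Step 2: kill 3-torsion and odd-degree classes.}

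Multiplying the quadratic relation by suitable classes, and using $3\zeta=0$ together with $3\eta=0$, should give $\eta$ in terms of $\zeta^2$ and the 2-primary classes. Separately, $\nu$ should be expressible via $\alpha$: the relation $\alpha\nu^j=\alpha^{j+1}(\zeta_1+\alpha^2)^j$ combined with the new relations should force $\nu$ to vanish or become redundant. I expect to use Evens' transfer results \cite{Ev65} and Thomas's computation \cite{Th74} to check how the classes behave under restriction to $D_8$.

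\textbf{Step 3: assemble the presentation.}

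Collect the surviving generators and verify that no further relations appear, using the $p$-local splitting into the 2-part and the 3-part. On the 3-part, $\zeta$ is 3-torsion and the quadratic relation should make $\zeta^2=-c_2=\eta$ up to sign, so $\eta$ is eliminated. On the 2-part, $\zeta$ is 3-torsion and so vanishes 2-locally, leaving $A^*_{S_4,(2)}$ modulo $\alpha$-type relations.

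\textbf{Main obstacle.}

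The hardest step is the precise identification of $c_2^{S_4}(V)$ and of how $\alpha,\nu$ survive 2-locally. The target claims $\alpha^2=0$ with $\alpha\ne0$, but $3c_1^{S_4}(V)=0$ naively kills $\alpha$. I would first recheck whether $c_1^{S_4}(V)$ is really $\alpha$, or instead a class that is 3-divisible or zero. I would test this on restrictions to $D_8$ and to the subgroup $\ZZ/2$ generated by a transposition, before committing to the final generator list.
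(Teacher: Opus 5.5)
Your proposal does not prove the statement. It stops at the conflict you found in Step~1. Its fallback in Step~2, eliminating $\nu$ via $\alpha\nu^{j}=\alpha^{j+1}(\zeta_1+\alpha^2)^{j}$, cannot work, because those relations become vacuous once $\alpha=0$.

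That conflict is not $2$-torsion bookkeeping waiting to be reconciled. Your Step~1 is correct, and the third isomorphism is false as stated. Since $A^1_{S_4}\cong\widehat{S_4}\cong\ZZ/2$ is generated by $\alpha=c_1(\mathrm{sgn})$, and $\det V=\mathrm{sgn}$, you have $c_1^{S_4}(V)=\alpha$. The restriction test you propose confirms this: both classes restrict to $c_{1,D_4}\neq 0$ on $D_8$, because $\mathrm{sgn}|_{D_8}$ has kernel $D_8\cap A_4=D_4$. Hence $3c_1^{S_4}(V)=\alpha$. The degree-one part of the second presentation is therefore $(\ZZ/2\cdot\alpha\oplus\ZZ\zeta)/(3\zeta,\alpha)\cong\ZZ/3$, while the third presentation has $\ZZ/2\oplus\ZZ/3$ in degree one.

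The Grassmannian side gives an independent check. $A^1_{\PGL_2}(\Gr(2,4)^s)$ is the quotient of $\mathrm{Pic}^{\PGL_2}(\Gr(2,4))\cong\ZZ h$ by the classes of the components of the non-stable locus $V(I'^3-6^3J)\cup V(I'^3+6^3J)$. Each $V(I'^3\mp 6^3J)$ is cut out by an invariant section of $\mathcal{O}(3)$, so these classes generate a subgroup of index dividing $3$. Thus there is no $2$-torsion in degree one.

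Your plan for the third isomorphism is otherwise the paper's route: pass to $H^*_{D_8}$ via Evens and Thomas, and identify $c_2^{S_4}(V)$ by restricting to $C_3$. The paper reaches $\alpha\neq 0$ only through a labelling error at the $D_8$ step. It sets $\mathrm{res}^{S_4}_{D_8}(\alpha')=c_1^{D_8}(k_{\langle\sigma_4\rangle})$ and $\mathrm{res}^{S_4}_{D_8}(c_1^{S_4}(V))=\beta'=c_{1,D_4}$. It then kills $\beta'$, deduces $\alpha'^2=\alpha'\beta'=0$ and $\nu'^2=\beta'\zeta_1''=0$, and keeps $\alpha$.

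By the computation above, $\mathrm{res}^{S_4}_{D_8}(\alpha')=c_{1,D_4}$ as well. So the class being killed is the restriction of $\alpha$ itself. The paper's conclusion ``$c_1^{S_4}(V)=0$ but $\alpha\neq 0$'' is self-contradictory, and its derivation of $\nu=0$ collapses with it.

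What your argument actually yields from the second line is
\[
A^*_{\PGL_2}(\Gr(2,4)^s)\cong\bigl(A^*_{S_4}/(\alpha)\bigr)[\zeta]\big/\bigl(3\zeta,\ \zeta^2+c_2^{S_4}(V)\bigr),\qquad c_2^{S_4}(V)=\pm\eta .
\]
Your Sylow-restriction identification of $c_2^{S_4}(V)$ is correct. Any further simplification, in particular whether $\nu$ survives, has to be decided inside $A^*_{S_4}/(\alpha)$. Neither your Step~2 nor the paper's $D_8$ argument settles it.
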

\begin{proof}
From Lemma 10.1 of \cite{To99}, there exists a split injection from the Chow ring of $S_n$ to the integral cohomology ring of $S_n$ for all $n\geq 1$. Thus, every relation between elements of the subring
\[
A^*_{S_4} \subset A^*_{S_4}(\mathbb{P}^1-\FWall) \cong A^*_{S_4}[\zeta] / (\zeta^2+c_1^{S_4}(V)\zeta+c_2^{S_4}(V), 3\zeta, 3c_1^{S_4}(V))
\]
maps injectively to the integral cohomology ring of $S_4$. 

Hence, to simplify the expression of $A^*_{S_4}(\mathbb{P}^1-\FWall)$ given by Proposition \ref{img-of-maps-f1-f2-from-X1-X2-to-P1}, we may introduce the relation
\[
3c_1^{S_4}(V) = 0
\]
to the cohomology ring of $S_4$ \cite{Th74}:
\begin{align*}
H^*(S_4, \ZZ) = \ZZ[\alpha', \nu', \zeta'_1, \eta'] / &(2\alpha'=2\nu'=4\zeta'_1=3\eta'=0, \\
&\alpha'\nu'^{2j}=\alpha'^{j+1}(\zeta'_1+\alpha'^2)^j : j \geq 1).
\end{align*}
Then, we may recover $A^*_{S_4}(\mathbb{P}^1-\FWall)$ by mapping $H^*_{S_4}/(3c_1^{S_4}(V))$ back to $A^*_{S_4} \subset A^*_{S_4}(\mathbb{P}^1-\FWall)$ via
\[
\alpha' \mapsto \alpha, \nu'^2 \mapsto \nu, \zeta_1 \mapsto \zeta_1, \eta' \mapsto \eta,
\]
as shown in Lemma \ref{S4-equivar-Chow-ring-of-a-point} using Lemma 10.1 and 12.1 of \cite{To99}.

In order to understand the relation $3c_1^{S_4}(V)=0$ in $H^*_{S_4}$, we need to take a closer look at the 2-primary part of $H^*_{S_4}$, which maps one-to-one into the cohomology ring of a Sylow 2-subgroup of $S_4$ under restriction (\cite{Th74}, also cf. \cite{To99}). $H^*_{D_8}$ with coefficients in $\ZZ$ is given by (\cite{Ev65}, as in \cite{Th74}):
\[
H^*_{D_8} \cong \ZZ[\alpha', \beta', \nu', \zeta''_1]/(2\alpha'=2\beta'=2\nu'=4\zeta_1''=0, \alpha'^2=\alpha'\beta', \nu'^2=\beta'\zeta_1''),
\]
where $\alpha', \nu' \in H^*_{D_8}$ are the restrictions of their counterparts $\alpha', \nu' \in H^*_{S_4}$ under $\mathrm{res}^{S_4}_{D_8}$, and
\[
\mathrm{res}^{S_4}_{D_8}(\zeta'_1) = \zeta''_1 + \beta'^2
\]
with $\beta'\in H^2_{D_8}$, $\zeta''_1\in H^4_{D_8}$.

From \cite{Ev65}, the generators $\alpha', \beta', \zeta''_1 \in H^*_{D_8}$ are Chern classes of the $D_8$-representations
\[
\alpha' = c_1^{D_8}(k_{\langle \sigma_4 \rangle}),
\beta' = c_1^{D_8}(k_{D_4}) = c_{1,D_4},
\zeta''_1 = c_1^{D_8}(k^2),
\]
using notations from Lemma \ref{excision-D8-equivar-Chow-X1-X2-from-P1}, where $k_{\langle \sigma_4 \rangle}$ is the one-dimensional irreducible representation of $D_8$ with kernel $\langle \sigma_4 \rangle \cong \ZZ/4 \leq D_8$, and $k^2$ is the unique two-dimensional irreducible representation of $D_8$. Thus, 
\[
\mathrm{res}^{S_4}_{D_8}(c_1^{S_4}(V)) = c_{1,D_4} = \beta'.
\]
Hence, we obtain $3\beta'=0$ by mapping the relation $3c_1^{S_4}(V)=0$ into $H^*_{D_8}$, so $\beta'=0$ in $H^*_{D_8}/(3\beta')$ implies that
\[
\alpha'^2=\nu'^2=0 \text{ in }H^*_{D_8}/(3\beta'),
\]
which we may lift injectively to $H^*_{S_4}/(3c_1^{S_4}(V))$. Moreover, the injective lifting of $\beta'=0$ gives
\[
c_1^{S_4}(V) = 0 \text{ in }H^*_{S_4}/(3c_1^{S_4}(V)).
\]
Thus, lifting back to $A^*_{S_4}(\mathbb{P}^1-\FWall)$, we obtain $\alpha^2=\nu=0$ and $c_1^{S_4}(V)=0$, i.e.,
\begin{align*}
A^*_{S_4}(\mathbb{P}^1-\FWall) &\cong A^*_{S_4}[\zeta]/(\zeta^2+c_2^{S_4}(V)=0, 3\zeta=0, \alpha^2=\nu=c_1^{S_4}(V)=0) \\
&\cong \ZZ[\alpha, \zeta_1, \eta, \zeta]/(2\alpha=4\zeta_1=3\eta=3\zeta=0, \alpha^2=0, \zeta^2+c_2^{S_4}(V)=0).
\end{align*}
Lastly, notice that
\[
\mathrm{res}^{S_4}_{D_8}(c_2^{S_4}(V)) = c_2^{D_8}(V \downarrow^{S_4}_{D_8}) = c_2^{D_8}(k_{triv}\oplus k_{D_4}) = 0
\]
as in Lemma \ref{excision-D8-equivar-Chow-X1-X2-from-P1}, so $c_2^{S_4}(V)$ must be in the 3-primary part of $A^*_{S_4}$ generated by $\eta$. By considering the restriction of $V$ to a Sylow 3-subgroup $C_3 \leq S_4$ and following the argument of \cite{Th74}, we have that
\[
\mathrm{res}^{S_4}_{C_3}(c_2^{S_4}(V)) = c_2^{C_3}(V \downarrow^{S_4}_{C_3}) = c_2^{C_3}(V_{std}) = \eta,
\]
where $V_{std}$ is the standard 2-dimensional representation of $C_3$. Thus, $c_2^{S_4}(V)=\eta\in A^*_{S_4}(\mathbb{P}^1-\FWall)$ as well, and it follows that
\begin{align*}
A^*_{S_4}(\mathbb{P}^1-\FWall) &\cong \ZZ[\alpha, \zeta_1, \eta, \zeta]/(2\alpha=4\zeta_1=3\eta=3\zeta=0, \alpha^2=0, \zeta^2+\eta=0) \\
&\cong \ZZ[\alpha, \zeta_1, \zeta]/(2\alpha=4\zeta_1=3\zeta=0, \alpha^2=0)
\end{align*}
as claimed.
\end{proof}

\section{Discussions and future directions}
\label{section-future-directions}

\subsection{Interpreting Theorem \ref{main-thm-Chow-ring-stable-locus}}
As before, let $k$ be an algebraically closed field of characteristic not equal to 2 or 3 so that Theorem \ref{main-thm-Chow-ring-stable-locus} holds. In the following, we give interpretations of the generators of $A^*_{\PGL_2}(\Gr(2,4)^s)$ from Theorem \ref{main-thm-Chow-ring-stable-locus} as elements of $A^*_{\PGL_2}(\Gr(2,4))$. Since the stable locus $\Gr(2,4)^s$ is open in $\Gr(2,4)$, we have a pullback homomorphism of Chow rings
\[
i^*: A^*_{\PGL_2}(\Gr(2,4)) \rightarrow A^*_{\PGL_2}(\Gr(2,4)^s)
\]
induced by the inclusion map $i: \Gr(2,4)^s \xhookrightarrow{} Gr(2,4)$. We shall use $i^*$ to construct lifts of the generators of $A^*_{\PGL_2}(\Gr(2,4)^s)$.

\begin{myprop}
\label{lift-of-class-of-hyperplane-section-to-entire-Chow-ring}

Let
\[
\zeta = c_1^{S_4}(\mathcal{O}(1)) 
\in A^1_{S_4}(\mathbb{P}^1-F) \cong A^1_{\PGL_2}(\Gr(2,4)^s)
\]
be the degree 1 generator of $A^*_{\PGL_2}(\Gr(2,4)^s)$ given by the pullback of the $S_4$-equivariant first Chern class of a hyperplane in $\mathbb{P}^1$ as in Theorem \ref{main-thm-Chow-ring-stable-locus}. Also, let $[\overline{\mathcal{O}_{A_4}}]^{\PGL_2} \in A^*_{\PGL_2}(\Gr(2,4))$ be the $\PGL_2$-equivariant class of the Zariski closure of the stable orbit $\mathcal{O}_{A_4}$ given by Equation \ref{stable-orbit-Z0}. Then:
\[
i^*[\overline{\mathcal{O}_{A_4}}]^{\PGL_2} = \zeta.
\]
\end{myprop}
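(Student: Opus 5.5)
The plan is to reduce the statement to computing the $S_4$-equivariant class of a finite $S_4$-orbit in $\mathbb{P}^1-\FWall$, which can then be written as the zero locus of an $S_4$-semi-invariant section of $\mathcal{O}(2)$.

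\emph{Step 1: reduce to the orbit $\mathcal{O}_{A_4}$.} Since $i$ is an open immersion, $i^*[\overline{\mathcal{O}_{A_4}}]^{\PGL_2}=[\overline{\mathcal{O}_{A_4}}\cap \Gr(2,4)^s]^{\PGL_2}$. By Theorem \ref{GIT-quotient-pencils-of-binary-cubics}, a stable point with $I'=0$ automatically has $J\neq 0$. Hence $\overline{\mathcal{O}_{A_4}}\cap\Gr(2,4)^s=\{I'=0\}\cap \Gr(2,4)^s=\mathcal{O}_{A_4}$, which is a closed orbit of codimension one in the stable locus. So it suffices to compute $[\mathcal{O}_{A_4}]^{\PGL_2}\in A^1_{\PGL_2}(\Gr(2,4)^s)$.

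\emph{Step 2: transport along the stack isomorphism.} Under the isomorphism of Corollary \ref{struct-stable-locus-quotient-stacks}, induced by the étale map $\Phi$ of Theorem \ref{main-thm-S4-symmetry-stable-locus}, the class $[\mathcal{O}_{A_4}]^{\PGL_2}$ corresponds to $[Z]^{S_4}$. Here
\[
Z=\PhiWall^{-1}(\mathcal{O}_{A_4})\subset \mathbb{P}^1-\FWall .
\]
Equation \ref{I-prime-for-Wall-rep} gives $I'|_{p_\rho}=3+\rho^2$, so $Z$ is the zero locus of the binary quadratic form $q(x,y)=x^2+3y^2$. This locus is the two reduced points $\rho=\pm\sqrt{-3}$ (reduced since $\mathrm{char}\,k\neq 2,3$), and it is a single $S_4$-orbit, consistent with Lemma \ref{S4-symmetry-of-locus-of-stable-points} and $|S_4/A_4|=2$. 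Because $\Phi$ is étale, the scheme-theoretic preimage is reduced, so no multiplicities appear.

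\emph{Step 3: compute $[Z]^{S_4}$ and simplify.} Using the formulas for $\sigma_2,\sigma_3$ from Proposition \ref{S4-action-on-X}, $q$ transforms under the linear lift of the $S_4$-action on $V$ by a character $\chi$ of $S_4$. A direct check (for instance $(x,y)\mapsto(x-3y,x+y)$ sends $q\mapsto 4q$) shows that $q$ is a semi-invariant, with the scalar determined once the lift to $V$ is normalized. Hence $q$ is an $S_4$-equivariant section of $\mathcal{O}(2)\otimes\chi$ vanishing exactly on $Z$, and so
\[
[Z]^{S_4}=2\zeta+c_1^{S_4}(\chi)
\]
for the appropriate sign convention relating $\mathcal{O}(1)$ to $V$. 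I then simplify in the ring of Proposition \ref{equivar-Chow-ring-of-coll-of-stable-orbits}. There $3\zeta=0$, so $2\zeta=-\zeta$, and $c_1^{S_4}(V)=0$. Moreover $c_1^{S_4}(\chi)$ is either $0$ or the sign class $\alpha$; which one occurs is decided by restricting $\chi$ to $D_8$ and comparing with the Chern class computations in Lemma \ref{excision-D8-equivar-Chow-X1-X2-from-P1}.

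\emph{Main obstacle.} The delicate step is the bookkeeping in Step 3: fixing the precise linear lift $V$ of the projective action, the character $\chi$ by which $q$ transforms, and the sign convention for $\zeta=c_1(\mathcal{O}(1))$ versus the tautological bundle. Together these must bring $2\zeta+c_1(\chi)$ to exactly $\zeta$, rather than $-\zeta$ or $\zeta+\alpha$. I will pin this down in two ways. First, I will use $\det V$, which is the sign character, and the fact that $\zeta^2=-\eta$. Second, as a consistency check, I will restrict to $D_8$ and to $C_3$: I will compare the pullbacks of $[Z]^{S_4}$ and of $\zeta$ along the fixed points $q_1,q_2$, using Lemmas \ref{excision-D8-equivar-Chow-X1-X2-from-P1} and \ref{transfer-hyperpl-and-chern-classes-D8-to-S4-over-P1}, and along a $C_3$-fixed point. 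By the injectivity of restriction to Sylow subgroups, agreement there forces the stated equality.
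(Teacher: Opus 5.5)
Your Steps 1 and 2 are correct. Your Step 3 is also more careful than the paper's own argument, which simply asserts that $\mathcal{O}(1)$ ``can be identified with'' the two-point set $\{(\pm\sqrt{-3}:1)\}$. The gap is the step you flag as the main obstacle: it cannot be completed, because the computation you set up yields $2\zeta$, not $\zeta$. Take $g_3=-\tfrac12\bigl(\begin{smallmatrix}1&-3\\1&1\end{smallmatrix}\bigr)$ and $g_2=\tfrac12\bigl(\begin{smallmatrix}-1&3\\1&1\end{smallmatrix}\bigr)$. These lift the actions of $\sigma_3,\sigma_2$ from Proposition \ref{S4-action-on-X}, and satisfy $g_3^3=g_2^2=(g_2g_3)^2=1$ with $\mathrm{tr}\,g_3=-1$ and $\mathrm{tr}\,g_2=0$. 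Hence, with $D_4$ acting trivially, they realize $V$. A direct check gives $q\circ g_3=q\circ g_2=q$ for $q=x^2+3y^2$. So $q$ is the invariant quadratic form of $V$ and $\chi$ is trivial. Since $\deg q$ is even, this is unchanged for the other linearization $V\otimes\mathrm{sgn}$, which only replaces $\zeta$ by $\zeta+c_1^{S_4}(\mathrm{sgn})$. Thus $[Z]^{S_4}=2\zeta$ already in $A^1_{S_4}(\mathbb{P}^1)$. Moreover, $A^1_{S_4}\cong\mathrm{Hom}(S_4,k^*)\cong\ZZ/2$ is generated by $c_1^{S_4}(\mathrm{sgn})=c_1^{S_4}(\det V)=c_1^{S_4}(V)$. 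So once $c_1^{S_4}(V)=0$, the class $\alpha$ dies too, and $A^1_{S_4}(\mathbb{P}^1-\FWall)\cong\ZZ/3\cdot\zeta$. Your argument therefore proves $i^*[\overline{\mathcal{O}_{A_4}}]^{\PGL_2}=2\zeta=-\zeta\neq\zeta$.

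No choice of lift or character can repair this. The non-equivariant degree $A^1_{S_4}(\mathbb{P}^1)\to A^1(\mathbb{P}^1)=\ZZ$ sends $\zeta\mapsto 1$, kills $A^1_{S_4}$, and sends $[F_1],[F_2]\mapsto 3$. These two classes span the kernel of restriction to $\mathbb{P}^1-\FWall$ in degree $1$. So the degree descends to a map $A^1_{S_4}(\mathbb{P}^1-\FWall)\to\ZZ/3$ with $\zeta\mapsto 1$ and $[Z]^{S_4}\mapsto 2$.

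Your proposed check at a $C_3$-fixed point sees the same thing. The fixed points of $\sigma_3$ are exactly the two points of $Z$. At $z=(\sqrt{-3}:1)$ one has $V|_{C_3}\cong L_z\oplus L_z^\vee$. Then $\zeta|_z=c_1^{C_3}(L_z^\vee)$ generates $A^1_{C_3}\cong\ZZ/3$, while $[Z]|_z=c_1^{C_3}(T_z\mathbb{P}^1)=c_1^{C_3}((L_z^\vee)^{\otimes 2})=2\zeta|_z$. The other tools you list ($\det V=\mathrm{sgn}$, $\zeta^2=-\eta$, restriction to $D_8$) either see only $2$-torsion or are symmetric under $\zeta\mapsto-\zeta$. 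They cannot change the sign.

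The paper normalizes $\zeta=c_1^{S_4}(\mathcal{O}(1))$ as the class of a point, as in $[q_1]^{D_8}=\xi$. With that normalization, the stated identity holds only after replacing $\zeta$ by $-\zeta=c_1^{S_4}(\mathcal{O}(-1))$. The discrepancy originates in the paper's identification of a degree-two divisor with $\mathcal{O}(1)$. So rather than forcing the bookkeeping to land on $\zeta$, you should state the corrected value $2\zeta$.
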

\begin{proof}
Recall that $\mathcal{O}_{A_4} \subset \Gr(2,4)^s$ is the unique stable orbit with isotropy subgroup $A_4$ from Proposition \ref{stabilizer-stable-orbit-Z0}. By calculating the $I'$- and $J$-invariants of $\mathcal{O}_{A_4}$ using Equations \ref{I-prime-for-Wall-rep} and \ref{J-prime-for-Wall-rep}, we find that the unique ${\PGL}_2$-representatives of the form $p_{\rho} = \langle t_0^3+\rho t_0t_1^2, \rho t_0^2t_1+t_1^3 \rangle$ are the ones with $\rho=\pm i\sqrt{3}$. Thus, by Theorem \ref{main-thm-S4-symmetry-stable-locus}:
\[
\Phi({\PGL}_2\times \{ [i\sqrt{3}:1] \}) = \Phi({\PGL}_2\times \{ [-i\sqrt{3}:1] \}) = \mathcal{O}_{A_4},
\]
and we get the identification
\[
\mathcal{O}_{A_4} \cong {\PGL}_2\times_{S_4} \{ [i\sqrt{3}:1], [-i\sqrt{3}:1] \}
\]
via the induced ${\PGL}_2$-equivariant isomorphism of algebraic spaces. Notice that the $S_4$-equivariant hyperplane bundle $\mathcal{O}(1)$ over $\mathbb{P}^1$ can be identified with the $S_4$-invariant subset $\{ [i\sqrt{3}:1], [-i\sqrt{3}:1] \} \subset \mathbb{P}^1$ using the definition
\[
\mathbb{P}^1 := \mathrm{Proj}(V)
\]
where $V$ is the unique 2-dimensional irreducible representation of $S_4$. Hence,
\[
i^*([\overline{\mathcal{O}_{A_4}}]^{{\PGL}_2}) = [\mathcal{O}_{A_4}]^{{\PGL}_2} = c_1^{S_4}(\mathcal{O}(1)),
\]
where $[\mathcal{O}_{A_4}]^{{\PGL}_2} \in A^*_{{\PGL}_2}(\Gr(2,4)^s)$ is interpreted as the class of $\mathcal{O}_{A_4}$ in $\Gr(2,4)^s$.
\end{proof}

\begin{myrmk}
\label{Z0-is-the-only-nonreduced-orbit}

It is known that the Zariski closure of the stable orbit $\mathcal{O}_{A_4}$ is the only $\PGL_2$-orbit closure of $\Gr(2,4)$ that is non-reduced as a scheme over $\mathrm{Spec}(k)$. This can be shown by noticing that
\[
\overline{\mathcal{O}_{A_4}} = V(I'^3) \subset \Gr(2,4),
\]
from the definition of $\overline{\mathcal{O}_{A_4}}$ by Equation \ref{stable-orbit-Z0}. Thus, the above Proposition demonstrates that the  class $[\mathcal{O}_{A_4}]^{\PGL_2} \in A^*_{\PGL_2}(\Gr(2,4)^s)$ has a special meaning as well.
\end{myrmk}

The generators of $A^*_{\PGL_2}(\Gr(2,4)^s)$ coming from $A^*_{S_4}$ can be interpreted as restrictions of generators of $A^*_{\PGL_2}$. For simplicity, we will only give interpretations for generators appearing in the simplified presentation from Proposition \ref{equivar-Chow-ring-of-coll-of-stable-orbits}.

\begin{myprop}
\label{lift-of-class-of-S4-generators-to-entire-Chow-ring}

Let
\[
\alpha\in A^1_{S_4}, \zeta_1\in A^2_{S_4},
\]
be the generators of $A^*_{\PGL_2}(\Gr(2,4)^s)$ coming from $A^*_{S_4}$ as in Proposition \ref{equivar-Chow-ring-of-coll-of-stable-orbits}. Then:
\begin{align*}
i^*(c_2(\mathfrak{sl}_2)) &= \zeta_1,\\
i^*(c_3(\mathfrak{sl}_2)) &= \alpha\zeta_1,
\end{align*}
where $c_2(\mathfrak{sl}_2), c_3(\mathfrak{sl}_2)$ are the generators of $A^*_{\PGL_2}$ from Theorem \ref{integral-Chow-ring-of-classifying-space-of-PGL2}.
\end{myprop}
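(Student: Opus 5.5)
The identification $A^*_{\PGL_2}(\Gr(2,4)^s)\cong A^*_{S_4}(\mathbb{P}^1-\FWall)$ from Corollary \ref{struct-stable-locus-quotient-stacks} is compatible with the structure maps from the classifying stacks. Concretely, the composite
\[
[(\mathbb{P}^1-\FWall)/S_4]\to[\Gr(2,4)^s/{\PGL}_2]\to B{\PGL}_2
\]
is the map induced by the inclusion $S_4\leq{\PGL}_2$. So I would first record that $i^*$ composed with the pullback $A^*_{\PGL_2}\to A^*_{\PGL_2}(\Gr(2,4))$ agrees with the composite
\[
A^*_{\PGL_2}\xrightarrow{\mathrm{res}^{\PGL_2}_{S_4}}A^*_{S_4}\to A^*_{S_4}(\mathbb{P}^1-\FWall).
\]
This follows from the fact that $\Phi$ is ${\PGL}_2$-equivariant and from the isomorphism $[({\PGL}_2\times_{S_4}Y)/{\PGL}_2]\cong[Y/S_4]$. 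The problem is thereby reduced to computing $\mathrm{res}^{\PGL_2}_{S_4}(c_2(\mathfrak{sl}_2))$ and $\mathrm{res}^{\PGL_2}_{S_4}(c_3(\mathfrak{sl}_2))$, that is, $c_2^{S_4}$ and $c_3^{S_4}$ of the three-dimensional representation $\mathfrak{sl}_2\downarrow_{S_4}$. These then have to be read off in the simplified presentation of Proposition \ref{equivar-Chow-ring-of-coll-of-stable-orbits}.

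Next I would identify the representation. The adjoint action of the octahedral group $S_4\leq{\PGL}_2\cong SO(3)$ on $\mathfrak{sl}_2$ is the three-dimensional rotation representation, namely $\mathrm{sgn}\otimes$(standard). I would verify this by computing traces of $\sigma_2,\sigma_3$ and an element of $D_4$ acting on $\Sym^2W$ with the matrices of Theorem \ref{struct-finite-subgps-PGL2}. Its Chern classes are then computed prime by prime, using the injectivity of restriction to Sylow subgroups described in the proof of Proposition \ref{equivar-Chow-ring-of-coll-of-stable-orbits}.

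At $p=3$, restricting to $C_3$ gives $\mathfrak{sl}_2\downarrow_{C_3}=k\oplus\chi\oplus\chi^{-1}$. Hence $c_2=-x^2$, and this equals $\pm\eta$ up to the normalization used there, while $c_3=0$. In the quotient ring we have $\eta=-\zeta^2$, and the claimed formula contains no $\eta$-term. So I have to check that the 3-primary part of $i^*c_2(\mathfrak{sl}_2)$ vanishes in $A^*_{S_4}(\mathbb{P}^1-\FWall)$. Failing that, I would check that $\zeta_1$ is normalized so as to absorb this term. I expect this sign and normalization bookkeeping to need care.

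At $p=2$, restricting to $D_8$ splits $\mathfrak{sl}_2$ as a one-dimensional character plus the two-dimensional irreducible representation $k^2$. Using Evens' generators $\alpha',\beta',\zeta_1''$ recalled above, I would expand the total Chern class $(1+c_1(\text{char}))(1+c_1(k^2)+c_2(k^2))$. The result should be $c_2\mapsto\zeta''_1+(\text{terms in }\alpha',\beta')$ and $c_3\mapsto\alpha'\zeta''_1+\cdots$. Finally I would impose the relations $\beta'=0$ and $\alpha'^2=0$ that hold in the quotient, and lift back via $\mathrm{res}^{S_4}_{D_8}(\zeta'_1)=\zeta''_1+\beta'^2$. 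This gives $i^*c_2=\zeta_1$ and $i^*c_3=\alpha\zeta_1$.

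The main obstacle is this 2-primary step. One has to identify precisely which one-dimensional character of $D_8$ appears in $\mathfrak{sl}_2\downarrow_{D_8}$; the expected answer is $k_{\langle\sigma_4\rangle}$, so that its $c_1$ is $\alpha'$. One also has to pin down the Chern classes of $k^2$ in Evens' presentation. I would determine the character by an explicit trace computation on $\sigma_3^{-1}\sigma_2$ and the elements of $D_4$ acting on $\Sym^2W$.
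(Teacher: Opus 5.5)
\textbf{There is a genuine gap.} Your reduction to $\mathrm{res}^{\PGL_2}_{S_4}$ and the splitting $\mathfrak{sl}_2\downarrow_{D_8}\cong k_{\langle\sigma_4\rangle}\oplus k^2$ are exactly the paper's argument. The trouble is that both of your fallbacks at $p=3$ fail, and your own $C_3$ computation already shows that $i^*c_2(\mathfrak{sl}_2)\neq\zeta_1$.

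\textbf{The 3-primary step.} Restrict along the stable orbit $\mathcal{O}_{A_4}\cong\PGL_2/A_4$ (equivalently, the $S_4$-orbit $\{(\pm i\sqrt{3}:1)\}$), and then to $C_3\le A_4$. This composite is compatible with both $\mathrm{res}^{\PGL_2}_{C_3}$ and $\mathrm{res}^{S_4}_{C_3}$.
\begin{itemize}
\item It sends $c_2(\mathfrak{sl}_2)$ to $c_2(k\oplus\chi\oplus\chi^{-1})=-x^2$, which generates $A^2_{C_3}\cong\ZZ/3$.
\item It sends $\zeta_1$ to a class killed by both $4$ and $3$, that is, to $0$.
\end{itemize}
Hence the 3-primary part of $i^*c_2(\mathfrak{sl}_2)$ is nonzero. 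This part equals $c_2^{S_4}(V)$, since the two classes have the same restriction to $C_3$ and $c_2^{S_4}(V)$ has no 2-primary part. On $\mathbb{P}^1-\FWall$ it is $\eta=-\zeta^2$. It cannot vanish, and no normalization of the $4$-torsion class $\zeta_1$ can absorb it. The paper's proof misses this because it only tracks the $D_8$-part.

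\textbf{The 2-primary lift for $c_3$.} This also breaks down.
\begin{itemize}
\item $A^1_{S_4}=\mathrm{Hom}(S_4,k^*)$ is generated by $\alpha=c_1(\mathrm{sgn})$. Since $\mathrm{sgn}\downarrow_{D_8}$ has kernel $D_8\cap A_4=D_4$, we get $\mathrm{res}^{S_4}_{D_8}(\alpha)=c_{1,D_4}$, not $c_1(k_{\langle\sigma_4\rangle})$.
\item The character $k_{\langle\sigma_4\rangle}$ is not restricted from $S_4$. So the class $c_3(\mathfrak{sl}_2\downarrow_{D_8})=c_1(k_{\langle\sigma_4\rangle})\,c_2(k^2)$ cannot be ``lifted back'' to $\alpha\zeta_1$.
\end{itemize}
In fact the identity fails outright. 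Restrict to a generic stable orbit $\PGL_2/D_4$:
\begin{itemize}
\item $\alpha\zeta_1$ goes to $0$, because $\mathrm{sgn}\downarrow_{D_4}$ is trivial.
\item $\mathfrak{sl}_2\downarrow_{D_4}$ is the sum of the three nontrivial characters of $D_4\cong(\ZZ/2)^2$. So $c_3(\mathfrak{sl}_2)$ goes to $y_1y_2(y_1+y_2)$, which is nonzero in $A^*_{D_4}=\ZZ[y_1,y_2]/(2y_1,2y_2)$.
\end{itemize}
Relatedly, $\det V=\mathrm{sgn}$ gives $c_1^{S_4}(V)=\alpha$. Combined with $2\alpha=0$, the relation $3c_1^{S_4}(V)=0$ already forces $\alpha=0$ on $\mathbb{P}^1-\FWall$. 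So $\alpha\zeta_1=0$ there, while $i^*c_3(\mathfrak{sl}_2)\neq 0$.

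\textbf{Conclusion.} No amount of sign or normalization bookkeeping will yield the stated formulas. What your prime-by-prime method actually supports is:
\begin{itemize}
\item $i^*c_2(\mathfrak{sl}_2)$ carries an extra 3-primary term $-\zeta^2$;
\item $i^*c_3(\mathfrak{sl}_2)$ is a nonzero class that is not a multiple of $\alpha$.
\end{itemize}
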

\begin{proof}
We have a commutative diagram
\[
  \begin{tikzcd}
  A^*_{\PGL_2} \arrow{r}{} \arrow{d}{} & A^*_{S_4} \arrow{d}{} \\
  A^*_{\PGL_2}(\Gr(2,4)^s)  \arrow{r}{\cong} & A^*_{S_4}(\mathbb{P}^1-\FWall)
  \end{tikzcd}
\]
where the downward arrows are the respective inclusions and the top arrow is the pullback homomorphism induced by restricting $\PGL_2$-representations to $S_4$-representations. Thus, it suffices for us to compute the restriction of $\mathfrak{sl}_2$ as a $\PGL_2$-representation to $S_4$. In fact, by setting $D_8 \leq S_4$ to be the same Sylow 2-subgroup of $S_4$ as in Lemma \ref{excision-D8-equivar-Chow-X1-X2-from-P1}, we may calculate directly using the character table of $D_8$ that
\[
\mathfrak{sl}_2 \downarrow^{\PGL_2}_{D_8} \cong k_{\langle \sigma_4 \rangle} \oplus k^2
\]
as $D_8$-representations, where $k_{\langle \sigma_4 \rangle}$ is the 1-dimensional $D_8$-representation with kernel $\langle \sigma_4 \rangle = C_4 \leq D_8$ and $k^2$ is the standard 2-dimensional irreducible $D_8$-representation. By keeping track of $A^*_{D_8}$ as the 2-primary part of $A^*_{S_4}$ as in the proof of Proposition \ref{equivar-Chow-ring-of-coll-of-stable-orbits}, we obtain the desired results.
\end{proof}

\subsection{Difficulties with computing $A^*_{\mathrm{PGL}_2}(\mathrm{Gr}(2,4))$}
\label{difficulties-entire-Chow-ring}

In the following, we present a more detailed discussion of the difficulties associated with calculating $A^*_{\PGL_2}(\Gr(2,4))$ using the stratification method. We follow the steps as explained in Section \ref{section-intro}, 
and begin by giving a natural $\PGL_2$-stratification of $\Gr(2,4)$ following \cite{Ne81}. 

\begin{myprop}[Section 4 of \cite{Ne81}]
\label{GIT-classification-all-PGL2-orbits-pencils-of-binary-cubics}

Let $p\in \Gr(2,4)$ be such that $p$ is not $\PGL_2$-stable, i.e., $p\notin Gr(2,4)^s$. Fix $T = T_{\PGL_2}$ to be the maximal torus of diagonal matrices in $\PGL_2$, and let $N(T) := N_{\PGL_2}(T_{\PGL_2})$ be its normalizer, i.e., $N(T) = S_2 \ltimes T$ where $S_2$ is the group of $2\times 2$ permutation matrices. Also, denote the Borel subgroup of upper triangular matrices of $\PGL_2$ by $B_2$. Then, $p$ belongs to one of the unique $\PGL_2$-orbits
\[
Z_1, Z^{(0)}_2, Z^{(1)}_2, Z^{(2)}_2, Z^{(1)}_3, Z^{(2)}_3
\]
as given in the below table, where $\dim Z^{(j)}_i = i$ for each $Z^{(j)}_i$ (the dimensions, $\PGL_2$-representatives and GIT-classifications of each of the orbits are also given in \cite{Ne81}):
\begin{center} 
\begin{tabular}{|c|c|c|c|c|}
\hline
Orbit & Representative $p^{(j)}_i$ & Isotropy subgroup & GIT-classification & $\dim Z^{(j)}_i$ \\ 
\hline
$Z_1$ & $\langle t_0^3, t_0^2t_1 \rangle$ 
           & $B_2$ & unstable & 1  \\ \hline
$Z^{(0)}_2$ & $\langle t_0^3, t_0t_1^2 \rangle$ 
           & $T$ & unstable & 2  \\ \hline
$Z^{(1)}_2$ &  $\langle t_0t_1^2, t_0^2t_1 \rangle$ 
           & $N(T)$ 
           & semistable & 2  \\ \hline
$Z^{(2)}_2$ &  $\langle t_0^3, t_1^3 \rangle$
           & $N(T)$ & semistable & 2  \\ \hline
$Z^{(1)}_3$ &  $\langle t_0t_1^2, t_0^2(t_0+t_1) \rangle$ 
           & $\ZZ/2$ & semistable & 3 \\ \hline 
$Z^{(2)}_3$ &  $\langle t_0^3, t_1^2(t_0+t_1) \rangle$
           & $\ZZ/2$  & semistable & 3 \\ \hline
\end{tabular}
\end{center}
\end{myprop}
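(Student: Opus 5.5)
The plan is to prove the statement in three parts: that the non-stable locus is a union of finitely many orbits, that the six listed pencils exhaust those orbits, and that the stabilizer and dimension columns of the table are correct.

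\emph{Reduction to the unstable cone.} Theorem \ref{GIT-quotient-pencils-of-binary-cubics} says $p\notin\Gr(2,4)^s$ exactly when $(I'|_p,J|_p)=(0,0)$ or $I'^3|_p=\pm 6^3J|_p$. By the Hilbert--Mumford criterion, a non-stable pencil has a one-parameter subgroup $\lambda$ with $\mu(p,\lambda)\le 0$. After conjugating by $\PGL_2$, we may take $\lambda$ to be the diagonal torus $T$, acting on the monomials $t_0^{3-i}t_1^i$ with weights $3-2i\in\{3,1,-1,-3\}$. The weights of the Plücker coordinate $p_{ij}$ are then $6-2(i+j)$. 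Non-stability means $p$ has no nonzero Plücker coordinate of negative weight, or (in the strictly semistable case) that its limit under $\lambda$ stays in a fixed-point locus of weight zero. This means a suitable $\PGL_2$-translate of $p$ has $p_{23}=p_{13}=0$ (plus $p_{03}=p_{12}=0$ in the unstable case), or has its weight-zero part nonzero.

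\emph{Case analysis on Plücker vanishing.} I would split according to which of $p_{01},p_{02},p_{03},p_{12}$ survive, with $p_{13}=p_{23}=0$, and then use the remaining unipotent freedom in $B_2$ (the substitution $t_1\mapsto t_1+ct_0$) to normalize. The unstable pencils are those with all nonzero coordinates of positive weight, meaning they are spanned inside $\langle t_0^3,t_0^2t_1,t_0t_1^2\rangle$ with $p_{12}=0$. These reduce to $\langle t_0^3,t_0^2t_1\rangle$ or $\langle t_0^3,t_0t_1^2\rangle$, according to whether the pencil has a common root of multiplicity $2$ or not. This gives $Z_1$ and $Z^{(0)}_2$. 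The strictly semistable pencils have $p_{03}$ or $p_{12}$ nonzero. Normalizing leads to the two $T$-fixed pencils $\langle t_0^3,t_1^3\rangle$ and $\langle t_0t_1^2,t_0^2t_1\rangle$, whose $T$-orbits are closed, together with the non-closed $\PGL_2$-orbits degenerating onto them. For the latter, a $B_2$-normalization should give representatives with one root of multiplicity $2$ in one generator, namely $\langle t_0t_1^2,t_0^2(t_0+t_1)\rangle$ and $\langle t_0^3,t_1^2(t_0+t_1)\rangle$. To confirm that these orbits are pairwise distinct, I would use discrete invariants: the base locus of the pencil, and the multiplicity pattern of the Wronskian $\Wr(p)$ (for example $\Wr$ of $\langle t_0^3,t_1^3\rangle$ is $t_0^2t_1^2$).

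\emph{Stabilizers and dimensions.} For each representative, I would compute the stabilizer directly. $T$ acts diagonally, so a monomial pencil is $T$-stable, and the permutation $t_0\leftrightarrow t_1$ preserves $\langle t_0^3,t_1^3\rangle$ and $\langle t_0t_1^2,t_0^2t_1\rangle$, giving $N(T)$. The pencil $\langle t_0^3,t_0^2t_1\rangle=t_0^2\cdot W$ is preserved by every element fixing the line $t_0$, giving $B_2$. For $Z^{(1)}_3$ and $Z^{(2)}_3$, the stabilizer has to preserve the distinguished roots (the double root and the simple root), which cuts it down to a finite group; an explicit check should leave only an involution. Then $\dim Z=3-\dim\mathrm{Stab}$ gives the last column.

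The main obstacle is the normalization in the strictly semistable case: showing that every pencil with $I'^3=\pm6^3J$, but not in the closed orbits, reduces to exactly one of the two $3$-dimensional representatives. My first attempt would be to use $\Wr(p)$, which must be a quartic of non-stable type, meaning it has a double root, and to split on whether the double root of $\Wr$ is a base point of the pencil.
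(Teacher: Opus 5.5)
Your route is correct in outline but differs from the paper's. The paper does not reprove the classification. It cites Section 4 of \cite{Ne81} for exhaustiveness, representatives, dimensions and GIT type, and only asserts that the isotropy groups follow by direct computation. Rederiving everything from Hilbert--Mumford makes the statement self-contained and explains why exactly these six orbits occur, at the price of a case analysis the paper outsources.

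The step you flag as the main obstacle is short once you use the Pl\"{u}cker relation $p_{01}p_{23}-p_{02}p_{13}+p_{03}p_{12}=0$. On $\{p_{13}=p_{23}=0\}$ it forces $p_{03}p_{12}=0$. So a non-stable pencil is, up to translation, either contained in $t_0\cdot\Sym^2W$ (it has a base point) or contains $t_0^3$ (it contains a cube).

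\emph{Base point case.} Here $p=t_0\cdot q$ with $q$ either $L\cdot W$ or $\langle L_1^2,L_2^2\rangle$ with $L_1\neq L_2$. The position of $t_0$ relative to $L$, resp.\ $\{L_1,L_2\}$, gives $Z_1$, $Z^{(1)}_2$, $Z^{(0)}_2$, $Z^{(1)}_3$.

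\emph{Cube case.} Write $p=\langle t_0^3,g\rangle$ with $g$ taken modulo $t_0^3$. If $g$ has no $t_1^3$-term you are back in the base point case. Otherwise $t_1\mapsto t_1+ct_0$ kills the $t_0t_1^2$-term (using $\mathrm{char}\,k\neq 3$), and the torus does the rest, giving $Z^{(2)}_2$ and $Z^{(2)}_3$.

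The unstable locus is the $\PGL_2$-saturation of the intersection of the two cases, namely $Z_1\cup Z^{(0)}_2$. This also settles semistability of the other four orbits. It is the same dichotomy your Wronskian idea would find (a repeated root of $\Wr(p)$ is a base point or a point of total ramification), without needing the vanishing-order formula for $\Wr$. Also drop the ``or has its weight-zero part nonzero'' alternative in your first paragraph. Non-stability is precisely $p_{13}=p_{23}=0$ for some translate; every stable $p_\rho$ has $p_{03}\neq 0$.

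One correction for the stabilizers of the $3$-dimensional orbits: the data you preserve must be intrinsic to the pencil, not to a chosen generator. Fixing the double and simple roots of $t_0^2(t_0+t_1)$ or of $t_1^2(t_0+t_1)$ fixes three points of $\mathbb{P}^1$ and gives the trivial group, contradicting the table. The right data are as follows.
\begin{enumerate}
\item For $Z^{(1)}_3=t_0\cdot\langle t_1^2,(2t_0+t_1)^2\rangle$: the base point $[t_0]$ together with the unordered pair $\{[t_1],[2t_0+t_1]\}$ of square members of the residual pencil.
\item For $Z^{(2)}_3$: the cube point $[t_0]$ together with the unordered pair $\{[t_1],[2t_0+3t_1]\}$ of simple roots of $\Wr(p)\propto t_0^2t_1(2t_0+3t_1)$.
\end{enumerate}
The involutions $(t_0,t_1)\mapsto(-t_0,\,2t_0+t_1)$ and $(t_0,t_1)\mapsto(t_0,\,-t_1-\tfrac23t_0)$ swap these pairs and preserve the respective pencils, which gives $\ZZ/2$.

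For $Z_1$, $Z^{(0)}_2$, $Z^{(1)}_2$, $Z^{(2)}_2$ you also need the reverse inclusions. The stabilizer must preserve the base locus, resp.\ permute the only cubes $t_0^3,t_1^3$ in $\langle t_0^3,t_1^3\rangle$ (which uses $\mathrm{char}\,k\neq 3$). For $t_0\langle t_0^2,t_1^2\rangle$, a triangular matrix preserving the pencil must be diagonal.
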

\begin{proof}
The orbits $Z^{(j)}_i$ as well as their respective representatives $p^{(j)}_i$ are calculated by Newstead in Section 4 of \cite{Ne81}: these are also shown to be an exhaustive list of the non-stable $\PGL_2$-orbits of $\Gr(2,4)$. The expressions for isotropy subgroups may be found by direct computations.
\end{proof}

Next, we give explicit expressions for the Zariski closures of the $Z^{(j)}_i$'s.
\begin{myprop}
\label{closure-properties-of-Zi-in-Grassmannian}

Let $Z_1, Z^{(0)}_2, Z^{(1)}_2, Z^{(2)}_2, Z^{(1)}_3, Z^{(2)}_3$ be the non-stable $\PGL_2$-orbits as given in Proposition \ref{GIT-classification-all-PGL2-orbits-pencils-of-binary-cubics}, and denote their Zariski closures by $\overline{Z^{(j)}_i}$. Then:
\begin{enumerate}[label={\alph*.}]
\item $\overline{Z_1} = Z_1$.
\item $\overline{Z^{(0)}_2} = Z^{(0)}_2\cup Z_1$.
\item $\overline{Z^{(2)}_2} = Z^{(2)}_2\cup Z_1$.
\item $\overline{Z^{(2)}_3} = Z^{(2)}_3\cup Z^{(2)}_2\cup Z^{(0)}_2\cup Z_1$.
\item $\overline{Z^{(1)}_2} = Z^{(1)}_2\cup Z_1$.
\item $\overline{Z^{(1)}_3}= Z^{(1)}_3\cup Z^{(1)}_2\cup Z^{(0)}_2\cup Z_1$.
\end{enumerate}
\end{myprop}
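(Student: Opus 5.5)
Since each $Z^{(j)}_i$ is a single $\PGL_2$-orbit, its closure is a union of $Z^{(j)}_i$ and orbits of strictly smaller dimension, and these must lie in the non-stable locus $\Gr(2,4)-\Gr(2,4)^s$. The reason is that a stable orbit is closed in $\Gr(2,4)^s$ and has dimension $3$ (Proposition \ref{stabilizer-stable-orbits-not-Z0-expression}), so it cannot lie in the boundary of a $3$-dimensional orbit. So by Proposition \ref{GIT-classification-all-PGL2-orbits-pencils-of-binary-cubics} the only candidates are the six orbits in the table, restricted to those of smaller dimension. For each pair $(Z, Z')$ with $\dim Z' < \dim Z$, I will decide whether $Z' \subset \overline{Z}$. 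I will do this in two ways: exhibit a one-parameter family $A(s)\cdot p$ of translates of the representative $p$ of $Z$ whose limit as $s\to 0$ lies in $Z'$, or find a $\PGL_2$-invariant closed condition satisfied on $Z$ but failing on $Z'$.

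For the positive inclusions I will use diagonal one-parameter subgroups $\mathrm{diag}(1,s)$, together with translations $t_1\mapsto t_1+ct_0$, applied to the representatives $p^{(j)}_i$. Two examples show the method.
\begin{itemize}
\item For $Z_1\subset \overline{Z^{(0)}_2}$: translate $\langle t_0^3, t_0t_1^2\rangle$ by $t_1\mapsto t_1+ct_0$ to get $\langle t_0^3, t_0(t_1+ct_0)^2\rangle = \langle t_0^3, 2ct_0^2t_1+t_0t_1^2\rangle$. Conjugating by $\mathrm{diag}(1,s)$ and rescaling, this becomes $\langle t_0^3, 2c t_0^2t_1 + s t_0t_1^2\rangle$, which tends to $\langle t_0^3,t_0^2t_1\rangle$ as $s\to0$.
\item For $Z^{(2)}_3$ with representative $\langle t_0^3, t_1^2(t_0+t_1)\rangle$: rescaling $t_0\mapsto s t_0$ (and renormalizing the generators) gives $\langle t_0^3, t_1^2(st_0+t_1)\rangle\to\langle t_0^3,t_1^3\rangle\in Z^{(2)}_2$, and rescaling the other variable gives the limit $\langle t_0^3,t_0t_1^2\rangle\in Z^{(0)}_2$.
\end{itemize}
The analogous degenerations of $\langle t_0t_1^2,t_0^2(t_0+t_1)\rangle$ produce $Z^{(1)}_2$ and $Z^{(0)}_2$. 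In every case $Z_1$ then follows by transitivity, since it lies in the closure of each 2-dimensional orbit by the same kind of translation argument.

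For the negative statements I will use invariants that are constant on orbits and closed conditions.
\begin{itemize}
\item The base-point structure: whether the two cubics share a common root, and with what multiplicity. This is a closed condition, since it is the vanishing of a resultant.
\item The root-multiplicity type of the Wronskian $\Wr(p)\in\mathbb{P}^4$. By $\PGL_2$-equivariance of $\Wr$ and closedness of the coincident root loci $\overline{X_\lambda}$, if $\Wr(Z)\subset \overline{X_\lambda}$ then $\Wr(\overline{Z})\subset\overline{X_\lambda}$.
\end{itemize}
For instance, $Z^{(2)}_2=\langle t_0^3,t_1^3\rangle$ has Wronskian a multiple of $t_0^2t_1^2$, type $(2,2)$. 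Meanwhile $Z^{(1)}_2$ has a common base point of the pencil ($t_0$ divides both generators), and $Z^{(2)}_2$ has none. Base-pointedness is closed, so $Z^{(2)}_2\not\subset\overline{Z^{(1)}_3}$ once I check that $Z^{(1)}_3$ is base-pointed: $t_0$ divides both $t_0t_1^2$ and $t_0^2(t_0+t_1)$. Conversely, $Z^{(1)}_2\not\subset \overline{Z^{(2)}_3}$. To prove this I will compare the Wronskian types: $\Wr$ of $Z^{(2)}_3$ has a root of multiplicity $\geq 2$ at $t_1=0$ and the remaining structure forced, whereas $\Wr(t_0t_1^2,t_0^2t_1)$ is proportional to $t_0^2t_1^2$. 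A cleaner separating invariant here is the base locus: the base locus of $Z^{(2)}_3$ is empty, and base-freeness is open, not closed. So I will instead use the dimension count $\dim\overline{Z^{(2)}_3}-\dim Z^{(2)}_3 = 0$ along the boundary, together with the fact that the boundary has pure dimension $\le 2$. Then I identify the boundary exactly by computing which 2-dimensional orbits occur as limits of all one-parameter subgroups (Hilbert–Mumford). Equivalently, I can show that every limit point of $\overline{Z^{(2)}_3}$ still has a pencil member that is a perfect cube; this is a closed condition satisfied by $Z^{(0)}_2,Z^{(2)}_2$ but not by $\langle t_0t_1^2,t_0^2t_1\rangle$.

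The main obstacle is exactly these separation statements (d) and (f), namely that $Z^{(1)}_2\not\subset\overline{Z^{(2)}_3}$ and $Z^{(2)}_2\not\subset\overline{Z^{(1)}_3}$. For these I plan to rely on two closed conditions: "the pencil contains a cube" and "the pencil has a base point". I will verify on the representatives which orbits satisfy each condition.
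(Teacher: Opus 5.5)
Your proposal is correct, and it reaches the statement by a different route from the paper. You prove every inclusion by an explicit one-parameter degeneration of the listed representatives: translation $t_1\mapsto t_1+ct_0$ followed by $\mathrm{diag}(1,s)$ for the $2$-dimensional orbits, and rescaling one variable for the $3$-dimensional ones. You prove the two substantive exclusions with two classical closed conditions:
\begin{itemize}
\item vanishing of the resultant (the pencil has a base point), which holds on $Z^{(1)}_3$ but not on $\langle t_0^3,t_1^3\rangle$;
\item meeting the twisted cubic $C=\{L^3\}\subset\mathbb{P}(\Sym^3W)$ (the pencil contains a cube), which holds on $Z^{(2)}_3$ but not on $\langle t_0t_1^2,t_0^2t_1\rangle$, whose members $t_0t_1(bt_0+at_1)$ always have at least two distinct roots.
\end{itemize}
The paper instead gets $Z^{(j)}_2\subset\overline{Z^{(j)}_3}$, and implicitly the exclusions, from GIT. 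There, $Z^{(1)}_3\cup Z^{(1)}_2$ and $Z^{(2)}_3\cup Z^{(2)}_2$ are the fibres of the good quotient $\phi$ over $(6^3:1)$ and $(-6^3:1)$; each fibre is closed in $\Gr(2,4)^{ss}$ and contains a unique closed orbit. The paper treats (c) and (e) by Macaulay2 computations of defining ideals, and (b) via an irreducibility claim for $\{I'=J=0\}$. It handles $Z^{(0)}_2\subset\overline{Z^{(j)}_3}$ by a brief appeal to the triple-point and base-point descriptions. The GIT route packages both exclusions into the single fact that the two polystable $2$-dimensional orbits lie in different fibres of $\phi$. Your route is self-contained and checkable by hand. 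It also in effect identifies $\overline{Z^{(1)}_3}$ and $\overline{Z^{(2)}_3}$ geometrically, as the resultant divisor and the divisor of lines meeting $C$.

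When you write it up, discard the detours in your treatment of (d). Wronskian types cannot separate $Z^{(1)}_2$ from $\overline{Z^{(2)}_3}$: $\Wr(Z^{(2)}_3)$ has type $(2,1,1)$, and $\overline{X_{(2,1,1)}}\supset X_{(2,2)}$. The dimension-count sentence has no content. Listing one-parameter-subgroup limits of a representative does not by itself show that an orbit is \emph{absent} from a closure, so that step is not ``equivalent'' to the closed-condition argument. The cube condition is what does the work, and you should say why it is closed. It is the image of the closed incidence variety $\{(\ell,x)\mid x\in\ell\cap C\}\subset\Gr(2,4)\times C$ under the proper projection to $\Gr(2,4)$.
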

\begin{proof}
We start from the orbit with the smallest dimension, $\dim Z_1 = 1$ from \cite{Ne81}, and work our way up. 
\begin{enumerate}[label={\alph*.}]
\item The statement $\overline{Z_1} = Z_1$ is trivial since the closure cannot contain other orbits of dimensions strictly greater than $Z_1$. 
\item From Proposition \ref{GIT-classification-all-PGL2-orbits-pencils-of-binary-cubics}:
\[
Z^{(0)}_2 \cup Z_1 = \{ p\in \Gr(2,4) \mid I'|_p = J|_p = 0 \}
\]
is the collection of all unstable points. It follows that the ideal of $Z^{(0)}_2\cup Z_1$, $(I', J)$, is prime since
\[
\codim(Z^{(0)}_2\cup Z_1) = \dim \Gr(2,4) - \dim(Z^{(0)}_2\cup Z_1) 
=\dim \Gr(2,4) - \dim Z^{(0)}_2 = 2,
\]
where $\dim Z^{(0)}_2=2$ is given by \cite{Ne81}. Hence, $Z^{(0)}_2\cup Z_1$ is irreducible, so $\overline{Z^{(0)}_2} = Z^{(0)}_2\cup Z_1$.
\item From Proposition \ref{GIT-classification-all-PGL2-orbits-pencils-of-binary-cubics}, $p^{(2)}_2 := \langle t_0^3,t_1^3 \rangle$ is a representative for the orbit $Z^{(2)}_2$. By expressing points in $Z^{(2)}_2$ as $A\cdot p^{(2)}_2$ for some $A\in \PGL_2$ and finding relations on their Pl\"{u}cker coordinates, one may use any computer algebra system (such as Macaulay2) to check that points on $Z^{(2)}_2$ satisfy
\begin{align*}
p_{12}(9p_{03}+p_{12}) &= 9p_{02}p_{13},
&p_{12}^2 = 9p_{01}p_{23}
\end{align*}
along with the Pl\"{u}cker relation, where $p_{ij}$'s are the Pl\"{u}cker coordinates. Moreover, using Macaulay2 \cite{M2}, one may also check that the above relations plus the Pl\"{u}cker relation give a prime ideal of codimension 2 in the homogeneous coordinate ring of the Pl\"{u}cker coordinates $p_{ij}$, so they define the closure of $Z^{(2)}_2$ in $\Gr(2,4)$. We may further check that $Z_1$ satisfies the above relations, so it follows that
\[
\overline{Z^{(2)}_2} = Z^{(2)}_2\cup Z_1
\]
given that every orbit closure consists of the orbit itself and other orbits of strictly smaller dimensions.
\item From Section 4 of Newstead \cite{Ne81}, we have that
\[
Z^{(2)}_3 \cup Z^{(2)}_2 = \phi^{-1}(-6^3:1),
\]
so $Z^{(2)}_2$ is in the orbit closure of $Z^{(2)}_3$. Moreover, $Z^{(2)}_3$ is the orbit of pencils with exactly one triple point from Proposition \ref{GIT-classification-all-PGL2-orbits-pencils-of-binary-cubics} using descriptions in Section 4 of \cite{Ne81}, so its orbit closure must contain all pencils with at least one triple point. Hence, $\overline{Z^{(0)}_2} \subset \overline{Z^{(2)}_3}$, so the result follows.
\item From Proposition \ref{GIT-classification-all-PGL2-orbits-pencils-of-binary-cubics}, $p^{(1)}_2=\langle t_0^2t_1, t_0t_1^2 \rangle$ is a representative of the orbit $Z^{(1)}_2$. Using Macaulay2 \cite{M2}, one may check that Pl\"{u}cker coordinates of all points on $Z^{(1)}_2$ satisfy:
\begin{align*}
p_{02}^2-p_{01}p_{03}-p_{01}p_{12}&=0, &p_{02}p_{03}-p_{01}p_{13}=0, \\
p_{13}^2-p_{03}p_{23}-p_{12}p_{23}&=0, &p_{03}p_{13}-p_{02}p_{23}=0, \\
p_{01}p_{23}-p_{03}^2 &=0, 
\end{align*}
along with the Pl\"{u}cker relation. One may also verify that these relations give a prime ideal of codimension 2 in the homogeneous coordinate ring of the Pl\"{u}cker coordinates, so $\overline{Z^{(1)}_2}$ is contained in the subvariety given by this ideal. Finally, one may check that all points on $Z_1$ satisfy the above relations, so $Z_1\subset \overline{Z^{(1)}_2}$ and the result follows.
\item Similar to the case of $\overline{Z^{(2)}_3}$, Section 4 of \cite{Ne81} gives
\[
Z^{(1)}_3 \cup Z^{(1)}_2 = \phi^{-1}(6^3:1),
\]
so $Z^{(1)}_2\subset \overline{Z^{(1)}_3}$. Also, $\overline{Z^{(0)}_2} \subset \overline{Z^{(1)}_3}$ since $\overline{Z^{(1)}_3}$ contains all pencils with at least one base point. Thus, the result follows.
\end{enumerate}
\end{proof}

In this way, we may summarize inclusion relations between the closed $\PGL_2$-strata in $\Gr(2,4) - \Gr(2,4)^s$ as in Figure \ref{fig-PGL2-non-stable}.
\begin{figure}
\[
\Gr(2,4) - \Gr(2,4)^s = \left\{
  \begin{tikzcd}
  & \overline{Z^{(1)}_2} \arrow[hookrightarrow]{r}{} & \overline{Z^{(1)}_3} \\
  \overline{Z_1} \arrow[hookrightarrow]{ru}{} \arrow[hookrightarrow]{r}{} \arrow[hookrightarrow]{rd}{} 
  & \overline{Z^{(0)}_2} \arrow[hookrightarrow]{ru}{} \arrow[hookrightarrow]{rd}{} & \\
  & \overline{Z^{(2)}_2} \arrow[hookrightarrow]{r}{} & \overline{Z^{(2)}_3}
  \end{tikzcd}
  \right\}.
\]
\caption{Inclusion relationships between closures of the non-stable $PGL_2$-orbits of $\Gr(2,4)$}
\label{fig-PGL2-non-stable}
\end{figure}

Unfortunately, the stratification method in this case is greatly complicated by the overlapping closed strata, since we would have to deal with finding relations between generators that are pushforwards from different closed strata. For example, to compute $A^*_{\PGL_2}( \overline{Z^{(1)}_3} )$, we need the following exact sequences:
\begin{align*}
A^*_{\PGL_2}(Z_1) &\xrightarrow{} A^*_{\PGL_2}(\overline{Z^{(1)}_2})
\xrightarrow{} A^*_{\PGL_2}(Z^{(1)}_2)
\rightarrow 0, \\
A^*_{\PGL_2}(Z_1) &\xrightarrow{} A^*_{\PGL_2}(\overline{Z^{(0)}_2})
\xrightarrow{} A^*_{\PGL_2}(Z^{(0)}_2)
\rightarrow 0, \\
A^*_{\PGL_2}(Z_1) 
&\rightarrow A^*_{\PGL_2}(\overline{Z^{(1)}_2}) \oplus A^*_{\PGL_2}(\overline{Z^{(0)}_2})
\xrightarrow{} A^*_{\PGL_2}(\overline{Z^{(1)}_2}\cup\overline{Z^{(0)}_2})
\rightarrow 0, \\
A^*_{\PGL_2}(\overline{Z^{(1)}_2}\cup\overline{Z^{(0)}_2}) &\xrightarrow{} A^*_{\PGL_2}(\overline{Z^{(1)}_3})
\xrightarrow{} A^*_{\PGL_2}(Z^{(1)}_3)
\rightarrow 0.
\end{align*}
As such, we will need to find relations between pushforwards of generators of $A^*_{\PGL_2}(\overline{Z^{(1)}_2})$ and $A^*_{\PGL_2}(\overline{Z^{(0)}_2})$, which is a difficult problem since they both involve generators of equivariant Chow rings of two different orbits.

There is one other difficulty with finding relations once all the generators are found by the stratification method, and we illustrate this by the following example with $A^1_{\PGL_2}(\Gr(2,4))$.
\begin{myexample}
\label{example-Picard-gp-Grassmannian-of-lines}

From Proposition \ref{closure-properties-of-Zi-in-Grassmannian}, the excision exact sequence for computing $A^1_{\PGL_2}(\Gr(2,4))$ is given by
\[
A^0_{\PGL_2}(\overline{Z^{(1)}_3}\cup\overline{Z^{(2)}_3}) \xrightarrow{} A^1_{\PGL_2}(\Gr(2,4))
\xrightarrow{} A^1_{\PGL_2}(\Gr(2,4)^s)
\rightarrow 0.
\]
Roughly speaking, the image of the leftmost pushforward map is generated by the $\PGL_2$-equivariant fundamental classes $[\overline{Z^{(1)}_3}]^{\PGL_2}$ and $[\overline{Z^{(2)}_3}]^{\PGL_2}$ in $A^1_{\PGL_2}(\Gr(2,4))$. The proof of Proposition \ref{closure-properties-of-Zi-in-Grassmannian} gives
\begin{align*}
Z^{(1)}_3 \cup Z^{(1)}_2 &= \phi^{-1}(6^3:1),\\
Z^{(2)}_3 \cup Z^{(2)}_2 &= \phi^{-1}(-6^3:1),
\end{align*}
where 
\[
\phi: \Gr(2,4)^{ss} \rightarrow \mathbb{P}^1, p \mapsto (I'^3|_p:J|_p)
\]
is the good GIT quotient of $\Gr(2,4)$ from \cite{Ne81}. It follows that
\begin{align*}
\overline{Z^{(1)}_3} &= V(I'^3-6^3J) \subset \Gr(2,4), \\
\overline{Z^{(2)}_3} &= V(I'^3+6^3J) \subset \Gr(2,4).
\end{align*}
Similarly, the closure of every $\PGL_2$-stable orbit $Z\subset \Gr(2,4)^s$ such that $\phi(Z) = (x:y)$ is given by
\[
\overline{Z} = V(yI'^3-xJ) \subset \Gr(2,4).
\]
Consider the Wronskian map
\[
\Wr: \Gr(2,4)^{ss} \rightarrow \mathbb{P}^4
\]
in the form given by Newstead \cite{Ne81} as introduced in Section \ref{subsection-basics-GIT-for-pencils-of-binary-cubics}. The above shows that $\overline{Z^{(1)}_3}$, $\overline{Z^{(2)}_3}$ as well as every $\overline{Z} \neq \overline{\mathcal{O}_{A_4}}$ is a section of the pullback of the line bundle $\mathcal{O}_{\mathbb{P}^4}(3)$ over $\mathbb{P}^4$ via $\Wr$. 

Since $\mathcal{O}_{\mathbb{P}^4}(3)$ is $\GL_2$-equivariant, we have that
\[
[\overline{Z^{(1)}_3}]^{\GL_2} = [\overline{Z^{(2)}_3}]^{\GL_2} = [\overline{Z}]^{\GL_2}
\]
whenever $Z\neq \mathcal{O}_{A_4}$. However, it is not evident whether this relation still holds in the $\PGL_2$-equivariant setting, given that $\mathcal{O}_{\mathbb{P}^4}(3)$ is not $\PGL_2$-equivariant. In general, we would end up with an excessive number of generators via the stratification method given such difficulties with determining whether relations in $A^*_{\GL_2}(Gr(2,4))$ pull back to $A^*_{\PGL_2}(\Gr(2,4))$. 

It is possible that techniques from \cite{ST22} for computing $A^*_{\PGL_2}(\mathbb{P}^n)$ would help with our computations. However, it does not appear that results from \cite{ST22} give immediate answers to the problem of computing $A^*_{\PGL_2}(\Gr(2,4))$, since the Wronskian does not induce a surjective pullback homomorphism of Chow rings. For example, we have
\[
\Wr(Z^{(1)}_3\cup Z^{(1)}_2) = \Wr(Z^{(2)}_3\cup Z^{(2)}_2)
\]
from the above, so we would get
\[
[Z^{(1)}_3\cup Z^{(1)}_2]^{\PGL_2} + [Z^{(2)}_3\cup Z^{(2)}_2]^{\PGL_2} = {\Wr}^*[\Wr(Z^{(1)}_3\cup Z^{(1)}_2)]^{\PGL_2}
\]
where the class $[\Wr(Z^{(1)}_3\cup Z^{(1)}_2)]^{\PGL_2} \in A^*_{\PGL_2}(\mathbb{P}^4)$ is computable. Hence, it still remains to determine relations between $[\overline{Z^{(1)}_3}]^{\PGL_2}$ and $[\overline{Z^{(2)}_3}]^{\PGL_2}$ in $A^*_{\PGL_2}(\Gr(2,4))$.
\end{myexample}

\end{document}